\DeclareSymbolFont{tipa}{T3}{cmr}{m}{n}
\DeclareMathAccent{\invbreve}{\mathalpha}{tipa}{16}
\newcommand{\R}{\mathbb{R}}
\newcommand{\Op}{\operatorname{Op}}
\newcommand{\supp}{\operatorname{supp}}
\newcommand{\T}{\mathbb{T}} 
\newcommand{\sgn}{\operatorname{sgn}}
\newcommand{\w}{\mathbf{w}}
\newtheorem{prop}{Proposition}
\newtheorem{lemma}{Lemma}
\newtheorem{definition}{Definition}
\newtheorem{teor}{Theorem}
\theoremstyle{remark}
\newtheorem{example}{Example}
\definecolor{darkgreen}{RGB}{0,100,10}
\theoremstyle{remark}
\newtheorem{remark}{Remark}
\title[Quantum limits of the Martinet sub-Laplacian]{Quantum limits of the Martinet sub-Laplacian}
\author{Víctor Arnaiz}
\address{Institut de Mathématiques de Bordeaux}
\email{victor.arnaiz-solorzano@math.u-bordeaux.fr}
\begin{document}

\begin{abstract}
In this article we study the semiclassical asymptotics of the Martinet sub-Laplacian on the flat toroidal cylinder $M = \R \times \mathbb{T}^2$. We describe the asymptotic distribution of sequences of eigenfunctions oscillating at  different scales prefixed by Rothschild-Stein estimates via the introduction of adapted two-microlocal semiclassical measures. We obtain concentration and invariance properties of these measures in terms of effective dynamics governed by harmonic or an-harmonic oscillators depending on the regime, and we show additional regularity properties with respect to critical points of the eigenvalues of the Montgomery family of quartic oscillators.
\end{abstract}

\maketitle

\section{Introduction}

Let $\mathbb{T}^2 := \R^2/2\pi \mathbb{Z}^2$ be the two dimensional flat torus. In this work we are interested in the study of \textit{quantum limits} of the \textit{Martinet sub-Laplacian} $\Delta_{\mathcal{M}}$ defined on the flat toroidal cylinder $M := \R\times  \mathbb{T}^2$ by the formally symmetric  operator
\begin{equation}
\label{e:Martinet}
\Delta_{\mathcal{M}} =  -\mathcal{X}_1^2 - \mathcal{X}_2^2,
\end{equation}
where, in canonical coordinates $(x,y,z) \in M$, the vector fields $\mathcal{X}_1$ and $\mathcal{X}_2$ are written by
\begin{equation}
\label{e:vector_fields}
\mathcal{X}_1 = \partial_x, \quad \mathcal{X}_2 = \partial_y + x^2 \partial_z.
\end{equation}
One can understand this operator as a \textit{sub-elliptic} version of the standard elliptic Laplacian $\Delta = -\partial_x^2 -\partial_y^2 - \partial_z^2$ with a degeneracy at the point $x = 0$. It can also be interpreted as a magnetic Laplacian \cite{Montgomery95} with magnetic potential given by $(A_x,A_y) = (0,-x^2)$, providing a basic example of magnetic field $B = (\partial_x A_y - \partial_y A_x)$ with zero locus at $x = 0$.

From the spectral point of view, we consider the operator $\Delta_{\mathcal{M}}$ as un-bounded operator on the Hilbert space $L^2_0(M) \subset L^2(M)$ defined by: 
\begin{align}
\label{e:0_L^2}
L_0^2(M) := \big \{ \psi \in L^2(M) \, : \, \mathcal{F}_z \psi(\cdot, \cdot ,0) = 0 \big \},
 \end{align} 
where $\mathcal{F}_z \psi(\cdot, \cdot, 0)$ stands for the $0$-th Fourier coefficient in the variable $z$. That is, we restrict ourselves to consider functions with zero-mean in the $z$-variable. We define the operator $\Delta_{\mathcal{M}}$ with domain 
$$
\mathcal{D}(\Delta_{\mathcal{M}}) = \{ \psi \in L^2_0(M) \, : \, \Delta_{\mathcal{M}} \psi \in L^2_0(M) \},
$$  
where $\Delta_{\mathcal{M}} \psi$ is considered in distributional sense. Hence the operator $(\mathcal{D}(\Delta_{\mathcal{M}}), \Delta_\mathcal{M})$ is self-adjoint and has compact resolvent (see Appendix \ref{a:spectral_properties} for a brief proof of these facts). In particular, the spectrum of $\Delta_{\mathcal{M}}$ is discrete and satisfies
$$
\operatorname{Sp}_{L^2_0(M)} ( \Delta_{\mathcal{M}}) = \{ 0 < \lambda^2_0 \leq \lambda^2_1 \leq \cdots \leq \lambda^2_j \leq \cdots \to + \infty \}.
$$
On the other hand, regarding the sub-ellipticity properties of $\Delta_{\mathcal{M}}$, we first notice that the horizontal distribution $\mathscr{D} := \operatorname{span}\{ \mathcal{X}_1, \mathcal{X}_2 \}$
satisfies the Hörmander condition \cite{Hor67} with step two, that is: 
$$
\operatorname{Lie}^{(2)}(\mathscr{D}) := \mathscr{D} + [\mathscr{D}, \mathscr{D}] + [\mathscr{D}, [\mathscr{D}, \mathscr{D}]] = TM.
$$ 
More precisely, the vector fields $\mathcal{X}_1$, $\mathcal{X}_2$, and $\mathcal{X}_3 = [\mathcal{X}_1,\mathcal{X}_2] = 2x \partial_z$ span the tangent space $T M$ at any point $q \in M$ except at the singular set 
\begin{equation}
\label{e:critical_set}
\mathscr{S} := \{ q =  (x,y,z) \in M \, : \, x = 0 \},
\end{equation}
where a further commutator $\mathcal{X}_4 = [\mathcal{X}_1,[\mathcal{X}_1,\mathcal{X}_2]] = 2 \partial_z$ is required.  Then, by Hörmander's Theorem \cite[Th. 1.1]{Hor67}, the operator  $\Delta_{\mathcal{M}}$ is hypoelliptic.  Moreover  $\Delta_{\mathcal{M}}$ satisfies  sharp hypoelliptic estimates established by Rotchschild and Stein \cite{RothschildStein76} (see \eqref{e:sub_elliptic_estimates} below).
\medskip

Let $(\psi_j)_{j \in \mathbb{N}}$ be a sequence of solutions to the equation
\begin{equation}
\label{e:non_semiclassical_eigenvalue_problem}
\Delta_{\mathcal{M}} \, \psi_j = \lambda^2_j \, \psi_j, \quad j \in \mathbb{N}.
 \end{equation}
 We are interested in studying the accumulation points for the weak-$\star$ topology of the sequence of probability densities $\vert \psi_j \vert^2 dx dy dz$ as $j \to +\infty$. By sequentially compactness, there exists a sub-sequence $(\psi_j)$ and a positive Radon measure $\nu \in \mathcal{M}_+(M)$ such that, for each $b \in \mathcal{C}_c(M)$,
$$
\lim_{j \to +\infty} \int_{M} b \, \vert \psi_j \vert^2 \, dxdydz = \int_{M} b \, d\nu.
$$ 
These measures $\nu$ obtained as weak-$\star$ limits of sequences of $L^2$-densities of eigenfunctions are usually called \textit{quantum limits}. Notice that $\nu$ is not in general a probability measure due to the non-compacity of the manifold $M$, since part of the mass of the sequence $(\psi_h)$ could scape to infinity in the $x$ variable. We will however restrict our study to the measure $\nu$, which captures the most relevant accumulation features of the sequence $(\psi_h)$ regarding sub-ellipticity of $\Delta_{\mathcal{M}}$. 

The study of quantum limits for sub-Riemannian Laplacians starts with the work of Colin-de-Verdière, Hillairet, and Trèlat \cite{Colin_de_Verdiere18} (see also \cite{Colin_de_Verdiere22}),  where the authors study the quantum limits of sub-Riemannian Laplacians in the particular 3D contact case, and state the first quantum ergodicity theorem in this context. In the 3D contact case, one considers a contact manifold $(M,\alpha)$ of dimension $3$. The contact structure gives a natural decomposition $TM = \mathscr{D} \oplus L_{\mathcal{Z}}$, where $\mathscr{D} = \ker \alpha$ is the contact distribution associated with the contact one-form $\alpha$, and $L_{\mathcal{Z}}$ is the line bundle spanned by the Reeb vector field $\mathcal{Z}$. Any sub-Laplacian $\Delta_{\operatorname{sR}} = \mathcal{X}_1^* \mathcal{X}_1 + \mathcal{X}_2^* \mathcal{X}_2$ satisfying $\mathscr{D} = \operatorname{span} \{ \mathcal{X}_1, \mathcal{X}_2 \}$ satisfies then the Hörmander condition of step one at any point. In \cite{Colin_de_Verdiere18}, the authors show that the non-compact part of the measure $\nu_\infty$ (coming roughly from high oscillations of the sequence $(\psi_j)$ in the direction of $\mathcal{Z} = [\mathcal{X}_1, \mathcal{X}_2] \, \operatorname{mod} \, \mathscr{D}$) is in this case invariant by the Reeb flow generated by $\mathcal{Z}$, which replaces the geodesic vector field as effective dynamical system arising in the analogous case of the Laplace-Beltrami operator on a Riemannian manifold (see for instance \cite[Th. 5.4]{Zworski12}). More recently, \cite{Ar_Riv24}, \cite{Riv23}, this study has been generalized to consider hypoelliptic perturbations of 3D contact sub-Laplacians. In these works, new two-microlocal semiclassical measures adapted to the different semiclassical scales predetermined by Rothschild-Stein estimates are introduced. This study refines the microlocal description of \cite{Colin_de_Verdiere18} and allows to capture the different effective propagation phenomena (reflectig the deformation due to perturbation) at each two-microlocal scale. In the framework of Heisenberg-type or more generally nilpotent Lie groups, the works \cite{FermanianFischer21,Fermanian19, Fermanian20} give analogous results on semiclassical asymptotics and defect measures of sub-Laplacians in this more algebraic context. See also \cite{Fermanian_Letrouit21} for applications to control and observability of the Schrödinger equations on groups of Heisenberg type.  The Baouendi-Grushin operator has also received much attention in the recent years. In \cite{ArnaizSun22}, the introduction of two-microlocal semiclassical measures allows to fully describe the concentration and invariance properties of quantum limits (or more generally quasimodes) in the sub-elliptic regime, which is a key ingredient in the description of propagation of singularities. This has direct applications to the study of observability and stabilization (see also \cite{BurqSun22}, \cite{Sun20} for previous works on observability and stabilization of Schrödinger equations governed by Baouendi-Grushin operators, and \cite{Letrouit23} for the study of lack of observability of sub-elliptic wave-equations). 

Comparing to the 3D contact case, the Martinet sub-Laplacian presents two main new issues. On the one-hand, $\Delta_{\mathcal{M}}$ is a step-two sub-Laplacian, which produces a much richer structure on the regions of phase space  accesible by eigenfunctions of the system. On the other hand, the distribution $\mathscr{D}$ is not regular, meaning that it has different step on $M\setminus \mathscr{S}$ and on the singular set $\mathscr{S}$. This generates an additional geometric problem when coupling these two different regimes.  In this work, we introduce new adapted two-microlocal semiclassical measures which allow to describe in full generality the concentration and invariance properties of quantum limits of the Martinet sub-Laplacian. Associated to the region $M \setminus \mathscr{S}$, the corresponding measures satisfy \textit{high-oscillation} invariance properties ruled by the harmonic oscillator, similarly as in the 3D contact case \cite{Ar_Riv24}, and a drift invariance given in terms of the Reeb vector field associated to the contact distribution, as stated in \cite{Colin_de_Verdiere18}; while in the singular region $\mathscr{S}$, the high-oscillation invariance is governed by a quartic oscillator, similar to the case of the Engel group \cite{Benedetto24}, while the drift invariance appears as an \textit{abnormal} propagation at larger scale than that of the Reeb vector field. This abnormal drift can be viewed as the effect of a non-commuting term with the quartic oscillator (which would nevertheless  vanish in the normal-form issued from the usual harmonic oscillator). This exotic term is the responsable to the abnormal propagation of singularities described in \cite{CdV_Letrouit22}. Moreover, at some critical points, this  drift invariance vanishes (singularities can propagate and positive or negative speeds), leading to obstructions to dispersion for evolution equations governed by the Martinet sub-Laplacian. In this work we establish new weak invariance properties by a further two-microlocalization near these critical points in phase space, similarly as the result of \cite[Th. 1.14]{Benedetto24} in the case of the Engel group.
\medskip

\subsection{The non-compact part of the measure $\nu$} Let us now fix some semiclassical notations. We set:
$$
h_j := \frac{1}{\lambda_j}, \quad \psi_{h_j} := \psi_j, \quad j \in \mathbb{N},
$$ 
and let us drop the index $j$, assuming in the sequel that passing to the limit $h \to 0^+$ is always done through a suitable subsequence of the original sequence $(h_j)_{j \in \mathbb{N}}$. The eigenvalue problem \eqref{e:non_semiclassical_eigenvalue_problem} then reads:
\begin{equation}
\label{e:semiclassical_eigenvalue_problem}
\big( h^2 \Delta_{\mathcal{M}} - 1 \big) \psi_h = 0, \quad \Vert \psi_h \Vert_{L^2(M)} = 1.
\end{equation}

In the study of quantum limits it is convenient to lift the involved wave functions to phase space in order keep  information of position and momentum at same time. This can be done by considering the  associated sequence of Wigner distributions of $(\psi_h)$. Let $a \in \mathcal{C}_c^\infty(T^*M)$, the Wigner distribution $W_h$ is defined by the map
\begin{equation}
\label{e:Wigner_distribution_compact}
W_h \, : \, a \longmapsto W_h(a) := \big \langle \Op^{\w}_h(a) \psi_h, \psi_h \big \rangle_{L^2(M)},
\end{equation}
where $\Op_h^{\w}(a)$ stands for the semiclassical Weyl quantization of the symbol $a$ (see Appendix \ref{a:appendix}). By sequentially compactness in the space of distributions and some positivity arguments (for instance using the sharp G$\ring{\text{a}}$rding inequality), one can show the existence of a sub-sequence $(\psi_h)$, which we do not relabel, and a positive Radon measure $\mu \in \mathcal{M}_+(T^*M)$ such that, for all $a \in \mathcal{C}_c^\infty(T^*M)$,
\begin{equation}
\label{e:semiclassical_measure}
\lim_{h \to 0} \big \langle \Op^{\w}_h(a) \psi_h, \psi_h \big \rangle_{L^2(M)} = \int_{T^*M} a \, d\mu.
\end{equation}
The measure $\mu$ is called the \textit{semiclassical measure} associated to the sub-sequence $(\psi_h)$. It verifies that
$$
\int_{T^*M} d\mu \leq \int_{M} d\nu,
$$
and we remark that in general this inequality is strict. Moreover, the semiclassical measure $\mu$ satisfies the following two properties. Let $H_\mathcal{M}$ be the symbol of the operator $h^2 \Delta_{\mathcal{M}}$, given in coordinates $(x,y,z,\xi,\eta,\zeta) \in T^*M$ by
$$
H_{\mathcal{M}}(x,y,z,\xi,\eta,\zeta) = \xi^2 + (\eta + x^2 \zeta)^2,
$$
and let $\phi_t^{H_{\mathcal{M}}}:T^*M \to T^*M$ be the Hamiltonian flow generated by $H_{\mathcal{M}}$, then:
\begin{itemize}
\item $\operatorname{supp} \mu \subset H_{\mathcal{M}}^{-1}(1) \subset T^*M$.
\medskip

\item $(\phi_t^{H_{\mathcal{M}}})_* \mu = \mu$ for all $t \in \R$.
\end{itemize}
\smallskip

 Let $\pi : T^*M \to M$ be the canonical projection map, we define $\nu_\infty =\nu - \pi_* \mu $. Due to the lack of compacity of the set $H_\mathcal{M}^{-1}(1)$, the measure $\nu_\infty$ is in general non null.  The study of the measure $\nu_\infty$ is the main goal of this article. 

\begin{remark}
We emphasize that the measure $\nu$ is not generally a probability measure due to the non-compacity of the manifold $M$, but it would be if, instead of on $M$, we were to define the Martinet sub-Laplacian on a compact manifold.  This could be done for instance considering the operator
$$
-\partial_x^2 - (\partial_y + V(x) \partial_z)^2, \quad V(x) = \sin^2\left( \frac{x}{2} \right), \quad (x,y,z) \in \T^3.
$$
Our results stated below remain valid in this more general case with some modifications. Notice that the function $V(x)$ has non-degenerate critical values at $x \equiv 0 \, (\text{mod} \, \pi)$, which replaces the singular set $\mathscr{S}$ in this case. We prefer however to restrict the exposition to $\Delta_{\mathcal{M}}$ on $M$ for the sake of clarity, and since $\nu$ contains the most relevant information regarding sub-ellipticity  of $\Delta_{\mathcal{M}}$.
\end{remark}

One of the difficulties of this study is the different allowed scales of oscillation of the sequence $(\psi_h)$ in the directions $\mathcal{X}_j$ for $j=1,\ldots 4$. These scales are prescribed by the Rothschild-Stein theorem \cite{RothschildStein76}.  Let us introduce the adapted Sobolev spaces $\mathscr{H}_{\mathcal{M}}^k$ defined with respect to the norms:
$$
\Vert \psi \Vert_{\mathscr{H}_{\mathcal{M}}^k} := \sum_{s = 0}^k  \sum_{i_1, \ldots, i_s =1  }^2 \left( \int_{M}  \big \vert \mathcal{X}_{i_1} \cdots \mathcal{X}_{i_s} \psi \big \vert^2  dx dy dz \right)^{1/2}.
$$ 
By Rothschild-Stein theorem \cite[Th. 18]{RothschildStein76}, for any $n \in \mathbb{N}$, and any $f \in \mathcal{C}_c^\infty(M)$, 
\begin{equation}
\label{e:sub_elliptic_estimates}
\Vert f \Vert_{H^{n/3}(M)} \lesssim \Vert f \Vert_{\mathscr{H}^{2n}_{\mathcal{M}}} \lesssim \Vert \Delta_{\mathcal{M}}^n f \Vert_{L^2(M)} + \Vert f \Vert_{L^2(M)},
\end{equation}
where the implicit constants in these estimates depend only on $M$ and $n$. Estimate \eqref{e:sub_elliptic_estimates} implies the following semiclassical sub-elliptic estimates for the sequence of solutions $(\psi_h)$ of \eqref{e:semiclassical_eigenvalue_problem}:
\begin{align}
\label{e:R-S_1}
\Vert h \, \mathcal{X}_j \, \psi_h \Vert_{L^2(M)} & \lesssim 1, \quad j = 1, 2, \\[0.2cm]
\label{e:R-S_2}
\Vert h^2\mathcal{X}_3 \, \psi_h \Vert_{L^2(M)} & \lesssim 1, \\[0.2cm]
\label{e:R-S_3}
\Vert h^3 \mathcal{X}_4 \, \psi_h \Vert_{L^2(M)} & \lesssim 1,
\end{align}
where the implicit constants in the above estimates dependend only on $M$.

\subsection{Main results}
\label{s:main_results}

We now state our main results describing the measure $\nu_\infty$. Define:
\begin{align}
\label{e:w_and_sigma}
w = w(x,\eta,\zeta) := \eta + x^2 \zeta, \quad \sigma = \sigma(x,\zeta) := 2x\zeta.
\end{align} 
Let us consider $a \in \mathcal{C}_c^\infty(M_{x,y,z} \times \R^4_{w,\xi, \sigma,\zeta})$ and we denote the class of those symbols by $\mathcal{S}_{\mathcal{M}}$. Using pseudodifferential calculus and estimates \eqref{e:R-S_1}, \eqref{e:R-S_2} and \eqref{e:R-S_3}, we will see that, for any $a \in \mathcal{S}_{\mathcal{M}}$, the distribution $\Op_h^{\w}(a) \psi_h$ 
is bounded in $L^2(M)$ for each solution $\psi_h$ to \eqref{e:semiclassical_eigenvalue_problem}. Let $\chi \in \mathcal{C}_c^\infty(\R;[0,1])$ be a bump function near zero with support in $[-1,1]$ and which equals one in $[-\frac{1}{2},\frac{1}{2}]$, and set $\breve{\chi} := 1 - \chi$. We fix this cut-off function $\chi$ for the rest of the article. Given $R > 0$, we define:
\begin{equation}
\label{e:R_symbol}
\mathbf{a}_{h,R}(x,y,z,\xi,\eta,\zeta) := \breve{\chi}\left( \frac{ \zeta}{R} \right) a(x,y,z, w, \xi, h\sigma, h^2\zeta).
\end{equation}
We introduce the adapted Wigner distribution
\begin{equation}
\label{e:original_Wigner_distribution}
\mathcal{S}_{\mathcal{M}} \ni a \longmapsto I_{h,R}(a) := \big \langle \Op_h^{\w}(\mathbf{a}_{h,R}) \psi_h, \psi_h \big \rangle_{L^2(M)},
\end{equation}
which extends the Wigner distribution \eqref{e:Wigner_distribution_compact} to the noncompact part of $H_{\mathcal{M}}^{-1}(1)$.

\subsubsection{Existence of two-microlocal semiclassical measures}

Our first result establish the existence of two microlocal semiclassical measures lifting the measure $\nu_\infty$ to phase space.
\begin{teor}
\label{t:existence}
Let $(\psi_h) \subset L^2(M)$ be a normalized sequence of solutions to \eqref{e:semiclassical_eigenvalue_problem}. Then there exists a subsequence $(h)$ and positive Radon measures\footnote{Let $\mathcal{H}$ be a Hilbert space, $\mathcal{L}^1(\mathcal{H})$ denotes the space of trace-class operators on $\mathcal{H}$.}
\begin{align*}
M_1 & \in \mathcal{M}_+(\mathbb{T}^2_{y,z} \times \R_\eta \times \R^*_\zeta ; \mathcal{L}^1(L^2(\R_x))), \\[0.2cm]
M_2 & \in \mathcal{M}_+(M_{x,y,z} \setminus \mathscr{S} \times \R^*_\sigma; \mathcal{L}^1(L^2(\R_w))), \\[0.2cm]
m_3^{\jmath} & \in \mathcal{M}_+(\mathbb{T}^2_{y,z} \times \R_\xi \times \R_\eta \times \R_\sigma \times \R_\varsigma), \quad \jmath \in  \{0,1\}, \\[0.2cm]
m_4 & \in \mathcal{M}_+(M_{x,y,z}\setminus \mathscr{S} \times \mathbb{T}^2_{(y,z)} \times \R_w \times \R_\xi),
\end{align*} 
such that, for each $a \in \mathcal{S}_{\mathcal{M}}$ one has:
\begin{align*}
\lim_{R \to + \infty} \lim_{h \to 0 } I_{h,R}(a) & =   \int_{\mathbb{T}^2_{y,z} \times \R_\eta \times \R^*_\zeta} \small \operatorname{Tr} \Big( \Op_{(x,\xi)}^\R\big( a \big(0 , y, z,  w(x,\eta,\zeta), \xi, \sigma(x,\zeta),  \zeta \big) \big) dM_1 \Big)  \\[0.2cm]
 & \quad +  \int_{M_{x,y,z}\setminus \mathscr{S} \times \R^*_\sigma} \operatorname{Tr} \left( \Op_{(w,\upsilon)}^\R \left( a \left( x,y,z, w, \sigma \upsilon, \sigma, 0 \right) \right) dM_2 \right) \\[0.2cm]
 & \quad + \sum_{ \jmath \in \{0,1 \}} \int_{\mathbb{T}^2_{y,z} \times \R_\xi \times \R_\eta \times \R_\sigma \times \R_\varsigma} a \big(0, y ,z, \eta + (-1)^{\jmath} \varsigma^2, \xi, \sigma, 0 \big) dm_3^{\jmath} \\[0.2cm]
 &  \quad + \int_{M_{x,y,z}\setminus \mathscr{S} \times \R_w \times \R_\xi} a\big(x,y,z,w,\xi,0,0 \big) dm_4.
\end{align*}
Moreover,
\begin{align}
\label{e:projection_nu_infty}
\nu_\infty  =   \int_{\R_\eta \times \R^*_\zeta} \operatorname{Tr} dM_1  +  \int_{ \R^*_\sigma} \operatorname{Tr}  dM_2 + \sum_{\jmath \in \{0,1 \}} \int_{\R_\xi \times \R_\eta \times \R_\sigma \times \R_\varsigma} dm_3^{\jmath}  + \int_{\R_w \times \R_\xi} dm_4.
\end{align}
\end{teor}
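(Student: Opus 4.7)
The plan is to decompose the Wigner functional $I_{h,R}$ using a phase-space partition of unity adapted to the four Rothschild--Stein scales, extract a Radon measure from each piece by a compactness argument, and identify the fiber operators through explicit rescalings. The starting point is a uniform $L^2$-boundedness estimate for $\Op_h^{\w}(\mathbf{a}_{h,R}) \psi_h$: since $a \in \mathcal{S}_{\mathcal{M}}$ is compactly supported in the rescaled variables $(x,y,z,w,\xi,h\sigma,h^2\zeta)$, the symbol $\mathbf{a}_{h,R}$ lies in an admissible semiclassical class, and Calder\'on--Vaillancourt combined with the subelliptic bounds \eqref{e:R-S_1}--\eqref{e:R-S_3} yields $|I_{h,R}(a)| \lesssim \|a\|_{\mathcal{S}_{\mathcal{M}}}$ uniformly in $h$. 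I would then introduce a two-parameter partition of unity $1 = \chi(x/\delta) + \breve{\chi}(x/\delta)$ separating a neighbourhood of $\mathscr{S}$ from its complement, together with a secondary cutoff $\chi(\sigma/\rho) + \breve{\chi}(\sigma/\rho)$ splitting off the regime of macroscopic $|\sigma|$ from the regime $|\sigma| \to 0$; the four resulting pieces correspond to $M_1$ and $m_3^\jmath$ near $\mathscr{S}$, and to $M_2$ and $m_4$ away from $\mathscr{S}$.

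For each piece I would apply a Fermanian-Kammerer--G\'erard type extraction theorem for nonnegative operator-valued distributions. Positivity of the limiting functional, obtained by the sharp G{\aa}rding inequality applied to symbols of the form $|b|^2$, together with the nuclearity of the remaining quantized variable, yields either a trace-class-operator-valued or a scalar positive Radon measure. In the $M_1$-regime the position $x$ is not rescaled and retains its dynamical character, giving a measure on $\mathbb{T}^2_{y,z}\times\R_\eta\times\R^*_\zeta$ valued in $\mathcal{L}^1(L^2(\R_x))$; substituting $w=\eta+x^2\zeta$ and $\sigma=2x\zeta$ inside $a$ and localizing the base point at $x=0$ recovers the stated argument $a(0,y,z,w(x,\eta,\zeta),\xi,\sigma(x,\zeta),\zeta)$. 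In the $M_2$-regime, $\sigma$ is a macroscopic parameter and the change of variables $(\eta,\xi)\mapsto(w,\upsilon)$ with $\upsilon$ the dual of $w$ diagonalizes the symplectic pair, producing fiber operators on $L^2(\R_w)$ while $h^2\zeta$ is sent to zero. The scalar measures $m_3^\jmath$ arise from a finer two-microlocalization along the critical curve $|x|\sim|\zeta|^{-1/2}$: the rescaling $x=\varsigma/\sqrt{|\zeta|}$ turns $x^2\zeta$ into $(-1)^\jmath\varsigma^2$ according to $\jmath = (1-\sgn\zeta)/2$, and both the $x$-scale and $\xi$-scale collapse into the single variable $\varsigma$, so no operator fiber remains. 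Finally $m_4$ collects the residual range $|\sigma|\to 0$ with $x$ bounded away from $\mathscr{S}$, where the quartic term drops out and one recovers a classical scalar semiclassical measure in the variables $(x,y,z,w,\xi)$.

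The projection formula \eqref{e:projection_nu_infty} then follows by specializing to symbols of the form $a(x,y,z,w,\xi,\sigma,\zeta)=b(x,y,z)$, for which $\mathbf{a}_{h,R}$ reduces to $b(x,y,z)\breve{\chi}(\zeta/R)$. Then $I_{h,R}(a)$ converges, in the iterated limit $h\to 0$ and $R\to\infty$, to $\int_M b\, d\nu_\infty$ by definition of $\nu_\infty = \nu - \pi_*\mu$; matching this with the four explicit integrals, each of which collapses to the integral of $\operatorname{Tr} dM_i$ or $dm_i$ when the symbol is constant in the fiber variables, yields the decomposition. The main obstacle is controlling the transition region where the singular, quartic, and Reeb regimes meet: the cutoff parameters $\delta, \rho$ must be coupled to the semiclassical parameter so that overlap contributions vanish in the iterated limit, and careful commutator estimates near the critical curve $x^2\zeta\sim 1$ are needed to show that the extracted limits are intrinsic, guaranteeing in particular that $m_3^\jmath$ captures precisely the defect between $M_1$ and $M_2$ at the interface rather than an artefact of the chosen partition.
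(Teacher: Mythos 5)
Your high-level strategy is sound and does match the paper in spirit: extract operator-valued measures by weak compactness of the Wigner functional, obtain positivity via a square-root/Gårding argument, and identify the fiber operators through explicit rescalings. The uniform bound via Calderón--Vaillancourt combined with the Rothschild--Stein estimates is also the right ingredient. But the specific decomposition you propose is wrong, and the error is not cosmetic. The paper's primary split is a cutoff in the \emph{last slot} of $a$, namely $\breve{\chi}(h^2\zeta/\epsilon)$ versus $\chi(h^2\zeta/\epsilon)$: the first piece produces $M_1$, and only the complement is then split further by $\chi(\eta/\rho)$ and $\chi(h\sigma/\delta)$. Your two-parameter partition $\chi(x/\delta)\,\chi(\sigma/\rho)$ (and complements) never constrains $h^2\zeta$, so it cannot isolate $M_1$. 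Concretely, on the near-$\mathscr{S}$ piece a $\sigma$-cutoff cannot distinguish $M_1$ from $m_3^\jmath$: in the $M_1$-regime, $\sigma(x,\zeta)=2x\zeta$ is a function of the quantum fiber variable $x$ and therefore sweeps all of $\mathbb{R}$ as $x$ ranges over $\mathbb{R}$; in the $m_3^\jmath$-regime, $\sigma$ is itself a free coordinate of the measure on $\mathbb{T}^2_{y,z}\times\mathbb{R}_\xi\times\mathbb{R}_\eta\times\mathbb{R}_\sigma\times\mathbb{R}_\varsigma$. Both pieces thus see all values of $\sigma$, and $\chi(\sigma/\rho)$ does not separate them. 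The genuine separating variable is $\zeta$ (equivalently $h^2\zeta$), which is exactly what the paper's first cut implements; without it your ``near-$\mathscr{S}$'' piece mixes the critical $h^2\zeta\sim 1$ regime with the sub-critical transition region and no well-defined limit object emerges.

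Two further gaps. First, you assert that the $x$- and $\xi$-scales ``collapse into the single variable $\varsigma$'' for $m_3^\jmath$; this is not accurate. The rescaling $\varsigma=x\sqrt{|\zeta|}$ absorbs only $x$, while $\xi$ remains an independent classical coordinate of $m_3^\jmath$, as the support set $\mathbb{T}^2_{y,z}\times\mathbb{R}_\xi\times\mathbb{R}_\eta\times\mathbb{R}_\sigma\times\mathbb{R}_\varsigma$ makes explicit. What disappears is the quantum pairing $(x,\xi)$, not the variable $\xi$. Second, the paper's near-$\mathscr{S}$ localization is not a direct $x$-cutoff: it is an $\eta$-cutoff which, combined with the compact support of $a$ in $w=\eta+x^2\zeta$ and in $h\sigma$, \emph{implies} a localization in $x$ at the correct $h$-dependent scale. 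A direct $\chi(x/\delta)$ introduces non-symplectic commutator errors with $h^2\Delta_{\mathcal{M}}$ which you would need to control separately. Finally, to prove the projection identity \eqref{e:projection_nu_infty} you cannot simply plug $a=b(x,y,z)$ into Calderón--Vaillancourt: $b$ is not compactly supported in $(w,\xi,\sigma,\zeta)$, and you need the $h$-oscillation statements of Proposition~\ref{p:h_oscillation} (consequences of the Rothschild--Stein estimates) to show that the mass of $\psi_h$ is effectively supported where the four pieces live and that nothing escapes to infinity in $(w,\xi,h\sigma,h^2\zeta)$.
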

\begin{remark}
The measures $M_1$, $M_2$, $m_3^\jmath$ and $m_4$ capture the asymptotic phase-space distribution of the sequence $(\psi_h)$ in different regimes. Roughly speaking, the measure $M_1$ captures oscillations in the direction of $\mathcal{X}_4$ at scale $h^{-3}$, which is critical with respect to the Rothschild-Stein estimate \eqref{e:R-S_3}, and hence this measure concentrates on the singular set $\mathscr{S}$. On the other hand, the measure $m_3^\jmath$ is the scalar counterpart of $M_1$, oscillating in the direction of $\mathcal{X}_4$ at lower scales than $h^{-3}$. We highlight that the index $\jmath \in \{0,1\}$ stores the information about the direction of oscillation in the variable $\zeta$. Similarly, the measure $M_2$ captures oscillations of the sequence $(\psi_h)$ in the direction of $\mathcal{X}_3$ at scale $h^{-2}$, which is in this case critical for the Rothschild-Stein estimate \eqref{e:R-S_2}. This measure concentrates away from the singular set $\mathscr{S}$. Finally, the measure $m_4$ is the scalar counterpart of $M_2$, which captures oscillations at scales lower than $h^{-2}$ in the direction of $\mathcal{X}_3$.
\end{remark}
\subsubsection{High oscillating invariance and concentration properties}
Let us introduce the quartic oscillator on $L^2(\R_x)$:
\begin{align}
\label{e:effective_quantum_hamiltonian_1}
\widehat{H}_{\eta,\zeta} & := D_x^2 + \left( \eta  + x^2 \zeta \right)^2, \quad (\eta,\zeta) \in \R_\eta \times \R_\zeta^*, \quad D_x := \frac{1}{i} \partial_x.
\end{align}

\begin{prop}{\cite[Corollary B.4]{Bahouri23}}
\label{p:simple_montgomery_martinet}
For each $(\eta,\zeta) \in \R_\eta \times \R_\zeta^*$, the spectrum of $\widehat{H}_{\eta,\zeta}$ is discrete,  with increasing to infinity sequence of eigenvalues given by:
\begin{equation}
\label{e:Martinet_Montgomery}
\lambda_k(\eta,\zeta) = \vert \zeta \vert^{2/3}\Lambda_k \left( \frac{\eta \sgn(\zeta)}{  \vert \zeta \vert^{1/3}} \right),
\end{equation}
where each eigenvalue is simple and $\Lambda_k(\mu)$ is the corresponding simple eigenvalue of the quartic oscillator
\begin{equation}
\label{e:Montgomery_family}
\widehat{H}_\mu := D_x^2 + (\mu + x^2)^2, \quad \mu \in \R.
\end{equation}
\end{prop}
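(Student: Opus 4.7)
The plan is to reduce everything to the spectral analysis of the one-parameter family $\widehat{H}_\mu$ via a unitary rescaling in $x$, and to invoke classical one-dimensional Schrödinger theory for discreteness and simplicity.

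First I would record the standard facts about $\widehat{H}_\mu$. Since the potential $(\mu+x^2)^2 \sim x^4$ at infinity, the operator is essentially self-adjoint on $\mathcal{C}_c^\infty(\R)$ and its form domain embeds compactly into $L^2(\R)$ by a standard confining-potential argument; hence $\widehat{H}_\mu$ has compact resolvent and discrete spectrum $\Lambda_0(\mu) \leq \Lambda_1(\mu) \leq \cdots \to +\infty$. Simplicity is the classical Wronskian argument: any two $L^2$ solutions of $\widehat{H}_\mu \psi = \Lambda \psi$ have constant Wronskian by Abel's identity, and this constant must vanish because both solutions decay at infinity, forcing linear dependence.

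Next I would implement the scaling. For $\alpha > 0$, the map $U_\alpha f(y) := \sqrt{\alpha}\, f(\alpha y)$ is unitary on $L^2(\R)$, and a short computation yields
\begin{equation*}
U_\alpha \, \widehat{H}_{\eta,\zeta} \, U_\alpha^{-1} \, = \, \alpha^{-2} D_y^2 + \bigl( \eta + \alpha^2 \zeta\, y^2 \bigr)^2.
\end{equation*}
Choosing $\alpha := |\zeta|^{-1/3}$ and factoring out the overall scale gives
\begin{equation*}
U_\alpha \, \widehat{H}_{\eta,\zeta} \, U_\alpha^{-1} \, = \, |\zeta|^{2/3} \Bigl[ D_y^2 + \bigl( |\zeta|^{-1/3} \eta + \sgn(\zeta)\, y^2 \bigr)^2 \Bigr].
\end{equation*}
Using the identity $(a - y^2)^2 = (y^2 - a)^2$ to absorb the sign of $\zeta$, the bracket equals $\widehat{H}_{\mu}$ with $\mu := \eta\, \sgn(\zeta) / |\zeta|^{1/3}$.

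The conclusion is then immediate: spectra and their multiplicities are invariant under unitary conjugation and scale linearly under multiplication by a positive constant, so $\lambda_k(\eta,\zeta) = |\zeta|^{2/3} \Lambda_k(\mu)$ is a simple eigenvalue for every $k$, giving \eqref{e:Martinet_Montgomery}. The only mildly delicate step is the sign tracking in the rescaling for $\zeta < 0$; beyond that the statement reduces to well-known properties of one-dimensional confining Schrödinger operators.
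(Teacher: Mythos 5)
Your scaling argument is correct and complete. The paper itself does not give a proof of this proposition---it cites \cite[Corollary B.4]{Bahouri23}---so your blind proposal supplies a self-contained argument where the paper delegates to a reference; the route you take (unitary dilation $U_\alpha$, choice $\alpha = |\zeta|^{-1/3}$, and the reflection $(a - y^2)^2 = (y^2 - a)^2$ to absorb $\sgn(\zeta)$) is the natural one and almost certainly the one used in the cited reference. I checked the conjugation: with $U_\alpha f(y) = \sqrt{\alpha}\,f(\alpha y)$ one indeed gets $U_\alpha \widehat{H}_{\eta,\zeta} U_\alpha^{-1} = \alpha^{-2}D_y^2 + (\eta + \alpha^2\zeta y^2)^2$, and factoring out $|\zeta|^{2/3}$ gives $\widehat{H}_\mu$ with $\mu = \eta\,\sgn(\zeta)/|\zeta|^{1/3}$ exactly as in \eqref{e:Martinet_Montgomery}. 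The discreteness via the confining potential $(\mu+x^2)^2 \sim x^4$, and simplicity via the vanishing Wronskian of two decaying $L^2$ eigenfunctions, are the standard one-dimensional facts and are correctly invoked. Two cosmetic remarks only: you should state that $U_\alpha$ preserves $\mathcal{C}_c^\infty(\R)$ so that the unitary conjugation intertwines the (unique) self-adjoint extensions, which you implicitly use when transferring domains; and the paper's convention (see the remark following the proposition) indexes the eigenvalues from $k=1$ with strict inequalities $0 < \Lambda_1 < \Lambda_2 < \cdots$, whereas you write $\Lambda_0 \leq \Lambda_1 \leq \cdots$ before invoking simplicity---harmless, but worth aligning with the paper's notation.
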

\begin{remark}
By \cite[Prop. 1.1]{Helffer_Leautaud22} (see also \cite{Bahouri23}), the quartic oscillator $\widehat{H}_\mu$ with domain
$$
\mathcal{D}(\widehat{H}_\mu) = \big \{ \psi \in L^2(\R) \, : \, \widehat{H}_\mu \psi \in L^2(\R) \big \}
$$
is self-adjoint and has compact resolvent. Hence its spectrum is discrete, given by $\{ \Lambda_k(\mu) \}_{k\in \mathbb{N}}$ satisfying
$$
0 < \Lambda_1(\mu) < \cdots < \Lambda_k(\mu) < \cdots \to + \infty.
$$ 
Moreover, each eigenvalue $\Lambda_k(\mu)$ is simple, depending analytically on $\mu \in \R$.
\end{remark}
The spectral properties of the Montgomery family of operators $\widehat{H}_\mu$ has been the object of much study in many different settings (see \cite{Helffer_Leautaud22} and the references therein), such as quantum mechanics, irreducible representation of certain nilpotent Lie groups of rank 3 or the study of Schrödinger operators with vanishing magentic fields \cite{Montgomery95} (hence the name of Montgomery attributed to this family).

We remark that, although the quartic oscillator \eqref{e:effective_quantum_hamiltonian_1} governs the regime giving the measure $M_1$ associated with the singular set $\mathscr{S}$, the measure $M_2$ concentrating on the set $M \setminus \mathscr{S}$ will satisfies invariance properties issued from the usual harmonic oscillator. We introduce the quantum harmonic oscillator:
\begin{align}
\label{e:effective_quantum_hamiltonian_2}
\widehat{G}_\sigma & :=\sigma^2 D_w^2 + w^2, \quad \sigma \in \R_\sigma^*.
\end{align}
For each $\sigma \in \R^*$, the sepectrum of $\widehat{G}_\sigma$ is discrete, each eigenvalue being simple and given by:
$$
\nu_j(\sigma) = \vert \sigma \vert \left(2 j + 1 \right), \quad j \in \mathbb{N}.
$$
The measures $m_3^\jmath$ and $m_4$ are respectively the scalar counterparts of $M_1$ and $M_2$ in the subcritical regimes. To describe the high-oscillation invariance associated with these measures, we also introduce the classical Hamiltonians:
\begin{align*}
H^{\jmath}_\eta(\varsigma,\xi) & := \xi^2 + (\eta + (-1)^{\jmath}\varsigma^2)^2, \quad \jmath \in \{0,1\}, \\[0.2cm]
G(w,\xi) & := \xi^2 + w^2.
\end{align*}
Our next result reads as follows:
\begin{teor}
\label{t:oscillation_invariance}
The measures $M_1$, $M_2$, $m_3^{\jmath}$, and $m_4$ verify respectively the following invariance properties:
\begin{align}
\label{e:invariance_1} &0 =  [\widehat{H}_{\eta,\zeta}, M_1 ]_{L^2(\R_x)}, \\[0.2cm]
\label{e:invariance_2} & 0 = [\widehat{G}_\sigma, M_2 ]_{L^2(\R_w)}, \\[0.2cm]
\label{e:invariance_3} & 0 = \{H_\eta^{\jmath}, m_3^{\jmath} \}_{\varsigma,\xi}, \\[0.2cm]
\label{e:invariance_4} & 0 = \{G, m_4 \}_{w,\xi},
\end{align}
where these invariance properties have to be understood in the sense of measures.
\end{teor}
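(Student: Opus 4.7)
The plan is to exploit the commutator identity
\[ \big\langle [h^2\Delta_\mathcal{M}, \Op_h^\w(\mathbf{b}_{h,R})]\psi_h, \psi_h \big\rangle_{L^2(M)} = 0, \]
valid for every $b \in \mathcal{S}_\mathcal{M}$ because $\psi_h$ is an eigenfunction of $h^2\Delta_\mathcal{M}$ with eigenvalue $1$. For each of the four measures, I would test against a symbol $\mathbf{b}_{h,R}$ built as in \eqref{e:R_symbol} and adapted to the corresponding semiclassical scale, identify the leading order of the commutator via pseudodifferential calculus, and then pass to the limit using Theorem \ref{t:existence}. The principal symbol is $H_\mathcal{M} = \xi^2 + w^2$ with $w = \eta + x^2\zeta$, and depending on the scale the commutator will either collapse to a classical Poisson bracket (for the scalar measures $m_3^\jmath, m_4$) or retain the structure of a quantum commutator (for the operator-valued measures $M_1, M_2$).

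\textbf{Sub-critical (scalar) regimes.} For $m_4$ I would first choose symbols $a(x,y,z,w,\xi)$ with the rescaled arguments $h\sigma, h^2\zeta$ frozen at $0$. The principal symbol of $\tfrac{i}{h}[h^2\Delta_\mathcal{M}, \Op_h^\w(\mathbf{b}_{h,R})]$ is then $\{H_\mathcal{M}, \mathbf{b}_{h,R}\}$, and away from $\mathscr{S}$ the change of variables $(x,\eta)\leftrightarrow(w,\sigma)$ transforms $H_\mathcal{M}$ into $\xi^2 + w^2 = G(w,\xi)$; taking the limit via Theorem \ref{t:existence} yields \eqref{e:invariance_4}. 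For $m_3^\jmath$ the treatment is analogous but performed near $\mathscr{S}$ at sub-critical scales in $\zeta$: the rescaling $\varsigma = x|\zeta|^{1/2}$ turns $x^2\zeta$ into $(-1)^\jmath\varsigma^2$ according to $\sgn(\zeta) = (-1)^\jmath$, the effective Hamiltonian becomes $H^\jmath_\eta$, and \eqref{e:invariance_3} follows.

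\textbf{Critical (operator-valued) regimes.} For $M_2$ the critical scale is $\sigma \sim h^{-1}$; I would introduce the rescaled momentum $\upsilon$ via $\xi = \sigma\upsilon$ and take symbols of the form $a(x,y,z,w,\sigma\upsilon,\sigma,0)$, so that the leading term of the commutator becomes that of $\widehat{G}_\sigma = \sigma^2 D_w^2 + w^2$ with the $(w,\upsilon)$-quantization of $a$; this yields \eqref{e:invariance_2}. For $M_1$ the critical scale is $\zeta\sim h^{-2}$ on the set $\mathscr{S}$, where the change of coordinates $(x,\eta)\leftrightarrow(w,\sigma)$ degenerates, and the full quartic structure in $x$ must be retained: after restriction to $x = 0$ and $(x,\xi)$-quantization, the leading term of the commutator identifies with that of $\widehat{H}_{\eta,\zeta} = D_x^2 + (\eta + x^2\zeta)^2$ with the quantization of $a$, giving \eqref{e:invariance_1} by trace-class duality.

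\textbf{Main obstacle.} The hardest part will be to justify the asymptotic expansions of these commutators with remainders that are uniform as $R \to +\infty$, because the symbols $\mathbf{b}_{h,R}$ do not lie in a standard symbol class: derivatives in $\zeta$ and $\sigma$ carry factors $h^2$ and $h$ respectively and must be balanced against the unboundedness of these variables. A careful dyadic decomposition in $\sigma$ and $\zeta$, combined with the compactness supplied by the Rothschild–Stein estimates \eqref{e:R-S_1}--\eqref{e:R-S_3}, should separate the four regimes cleanly. A further subtlety is the breakdown of the $(w,\sigma)$-coordinates precisely on $\mathscr{S}$, where $M_1$ and $m_3^\jmath$ concentrate; the gluing between the two classes of measures will require additional care to ensure that the cross-terms vanish in the limit.
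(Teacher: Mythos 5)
Your proposal is correct and matches the paper's approach: start from the Wigner equation $\langle [h^2\Delta_\mathcal{M},\Op_h^\w(\mathbf{a}_{h,R})]\psi_h,\psi_h\rangle=0$, localize to each of the four regimes via nested cut-offs, identify the operator commutator (resp.\ Poisson bracket) of $\widehat H_{\eta,\zeta}$ and $\widehat G_\sigma$ (resp.\ $H^\jmath_\eta$ and $G$) as the leading term, and pass to the limit through Theorem~\ref{t:existence}. The only detail you did not anticipate is that the changes of variables in the sub-critical regimes produce Jacobian factors ($\sigma$ for $m_4$, $\sqrt{|\zeta|}$ for $m_3^\jmath$), so the paper tests against $a/\sigma$ and $a/\sqrt{|\zeta|}$ to cancel them; this is a computation one would discover in executing your plan rather than a conceptual gap.
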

\begin{remark}
The invariance property \eqref{e:invariance_1} has the following consequence. Fix $(\eta,\zeta) \in \R^* \times \R^*$. Let $(\varphi_k(\eta,\zeta))_{k \in \mathbb{N}}$ be the orthonormal basis of $L^2(\R_x)$ given by eigenfunctions of $\widehat{H}_{\eta,\zeta}$ with associated eigenvalues $\lambda_k(\eta,\zeta)$.  Since for each $k \in \mathbb{N}$, the eigenvalue $\lambda_k(\eta,\zeta)$ is simple, there exists a measure $\mu_{1,k} \in \mathcal{M}_+(\T^2_{y,z} \times \R_\eta \times \R^*_\zeta)$ such that:
\begin{equation}
\label{e:eigenvalue_measure_1}
M_{\eta,\zeta} \, \varphi_k = \mu_{1,k} \, \varphi_k, \quad k \in \mathbb{N}.
\end{equation}
Similarly, let $(\phi_j(\sigma))_{j \in \mathbb{N}}$ be the orthonormal basis of eigenfunctions of $\widehat{G}_\sigma$ with associated eigenvalues $\nu_j(\sigma)$. The invariance property  \eqref{e:invariance_2} and the simplicity of the eigenvalues $\nu_j(\sigma)$ imply in this case that there exists positive Radon measures $\mu_{2,j} \in \mathcal{M}_+(M\setminus \mathscr{S} \times \mathbb{T}^2_{y,z} \times \R^*_\sigma)$ such that
\begin{equation}
\label{e:eigenvalue_measure_2}
M_2 \, \phi_j  = \mu_{2,j} \, \phi_j, \quad j \in \mathbb{N}.
\end{equation}
\end{remark}
Our next result describes the concentration properties of the two-microlocal semiclassical measures $M_1$, $M_2$, $m_3^\jmath$, and $m_4$:
\begin{teor}
\label{t:concentration}
The measures $\mu_{1,k}$, $\mu_{2,j}$, $m_3^{\jmath}$, and $m_4$ verify respectively the following concentration properties:
\begin{align}
\label{e:concentration_1}
&  \operatorname{supp} \mu_{1,k} \subset  \big \{ (y,z,\eta,\zeta) \in \mathbb{T}^2_{y,z} \times \R_\eta \times \R^*_\zeta \, : \,  \lambda_k(\eta,\zeta) = 1 \big \}, \\[0.2cm] 
\label{e:concentration_2}
& \operatorname{supp} \mu_{2,j} \subset  \big \{ (x,y,z,\sigma) \in M\setminus \mathscr{S} \times \R^*_\sigma \, : \, \nu_j(\sigma) = 1 \big \}, \\[0.2cm]
\label{e:concentration_3}
& \operatorname{supp} m_3^{\jmath} \subset \big \{ (y,z,\eta,\varsigma) \in \mathbb{T}^2_{y,z} \times \R_\xi \times \R_\eta \times \R_\varsigma \, : \, H_\eta^{\jmath}(\varsigma,\xi) = 1 \big \}, \\[0.2cm]
\label{e:concentration_4}
&  \operatorname{supp} m_4 \subset \big \{ (x,y,z,w,\xi) \in M\setminus \mathscr{S} \times \R_w \times \R_\xi \, : G(w,\xi) = 1 \, \big \}.
\end{align}
\end{teor}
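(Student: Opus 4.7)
The plan is to feed the eigenvalue equation, rewritten as $\Op_h^{\w}(H_{\mathcal{M}})\psi_h = \psi_h$, into the Wigner distribution $I_{h,R}$ of Theorem \ref{t:existence} through a cleverly chosen test symbol. Concretely, for any $a\in\mathcal{S}_{\mathcal{M}}$, symmetrizing the equation against $\Op_h^{\w}(\mathbf{a}_{h,R})$ and applying the Weyl calculus should give
\[
\langle \Op_h^{\w}\bigl((H_{\mathcal{M}}-1)\,\mathbf{a}_{h,R}\bigr)\psi_h,\psi_h\rangle \,=\, o(1),\qquad h\to 0,
\]
with the remainder controlled uniformly in $R$ by the Rothschild--Stein bounds \eqref{e:R-S_1}--\eqref{e:R-S_3}. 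The theorem then reduces to identifying, in each of the four regimes of Theorem \ref{t:existence}, the effective symbol into which $H_{\mathcal{M}}-1 = \xi^2 + w^2 - 1$ degenerates under the associated rescaling.

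For $\mu_{1,k}$, the arguments of $a$ in the $M_1$-contribution are $(0,y,z,w(x,\eta,\zeta),\xi,\sigma(x,\zeta),\zeta)$, with the quantization $\Op^{\R}_{(x,\xi)}$ acting in $(x,\xi)$. The Weyl quantization of $\xi^2 + (\eta+x^2\zeta)^2$ in $(x,\xi)$ is precisely the Montgomery operator $\widehat{H}_{\eta,\zeta}$, so the displayed identity passes to the limit as
\[
\int \operatorname{Tr}\bigl((\widehat{H}_{\eta,\zeta}-1)\,\Op^{\R}_{(x,\xi)}(a)\,dM_1\bigr) \,=\, 0
\]
for every admissible $a$, whence $(\widehat{H}_{\eta,\zeta}-1)M_1 = 0$ in the trace-class operator sense (using the commutation $[\widehat{H}_{\eta,\zeta},M_1]=0$ from Theorem \ref{t:oscillation_invariance} together with cyclicity of the trace). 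Evaluating this identity on the eigenfunctions $\varphi_k(\eta,\zeta)$ and invoking the simplicity of the Montgomery spectrum via \eqref{e:eigenvalue_measure_1} yields $(\lambda_k(\eta,\zeta)-1)\mu_{1,k} = 0$, which is \eqref{e:concentration_1}. The argument for $\mu_{2,j}$ is strictly analogous in the $M_2$-regime: the substitution $\xi=\sigma\upsilon$ turns the Weyl quantization of $\xi^2+w^2$ in $(w,\upsilon)$ into the harmonic oscillator $\widehat{G}_\sigma$, giving $(\widehat{G}_\sigma-1)M_2 = 0$ and, combined with \eqref{e:eigenvalue_measure_2}, the identity $(\nu_j(\sigma)-1)\mu_{2,j} = 0$ of \eqref{e:concentration_2}.

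For the scalar measures $m_3^{\jmath}$ and $m_4$ the same mechanism applies, except that the effective quantization collapses to multiplication by the classical principal symbol. Substituting the $m_3^{\jmath}$-arguments $(0,y,z,\eta+(-1)^{\jmath}\varsigma^2,\xi,\sigma,0)$ into $H_{\mathcal{M}}-1$ produces exactly $H_\eta^{\jmath}(\varsigma,\xi)-1$, so $\int (H_\eta^{\jmath}-1)\,a\,dm_3^{\jmath} = 0$ for all admissible $a$, which is \eqref{e:concentration_3}. Likewise, substituting the $m_4$-arguments $(x,y,z,w,\xi,0,0)$ into $H_{\mathcal{M}}-1$ produces $G(w,\xi)-1$, and \eqref{e:concentration_4} follows by the same density argument.

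The hard part is the very first step: controlling the Moyal remainder $\Op_h^{\w}(H_{\mathcal{M}}\,\mathbf{a}_{h,R}) - \tfrac12\bigl(\Op_h^{\w}(H_{\mathcal{M}})\Op_h^{\w}(\mathbf{a}_{h,R}) + \Op_h^{\w}(\mathbf{a}_{h,R})\Op_h^{\w}(H_{\mathcal{M}})\bigr)$ as an $o(1)$ quadratic form on $\psi_h$ \emph{uniformly in} $R$. The symbol $\mathbf{a}_{h,R}$ oscillates at critical scales through the cut-off $\breve{\chi}(\zeta/R)$ and the arguments $h\sigma$, $h^2\zeta$, so the derivatives appearing in the Moyal expansion introduce negative powers of $h$ that must be compensated by the positive powers coming from that same expansion. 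A careful accounting of these powers against \eqref{e:R-S_1}--\eqref{e:R-S_3} in each of the four regimes, together with verifying that the operator-valued products $\widehat{H}_{\eta,\zeta}\,\Op^{\R}_{(x,\xi)}(a)$ and $\widehat{G}_\sigma\,\Op^{\R}_{(w,\upsilon)}(a)$ pair in a genuinely trace-class way against $dM_1$ and $dM_2$ respectively, are the key technical points; once they are settled, the concentration statements \eqref{e:concentration_1}--\eqref{e:concentration_4} follow from the spectral decomposition exactly as sketched.
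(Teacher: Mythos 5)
Your high-level plan — feed the eigenvalue equation into the Wigner distribution via a test symbol of the form $(H_{\mathcal{M}}-1)a$, symmetrize, and read off the effective operator in each regime — captures the right spirit, but the argument has two genuine gaps, and the second one is fatal for the $M_2$ regime as you have set it up.

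First, the step you flag as ``the hard part'' is not just technically delicate: the symmetrized Moyal remainder $\Op_h^{\w}(H_{\mathcal{M}}\,\mathbf a_{h,R}) - \tfrac12\bigl(\Op_h^{\w}(H_{\mathcal{M}})\Op_h^{\w}(\mathbf a_{h,R}) + \Op_h^{\w}(\mathbf a_{h,R})\Op_h^{\w}(H_{\mathcal{M}})\bigr)$ is \emph{not} $o(1)$ in general. The leading second-order Moyal term contains products such as $\partial_x\partial_\eta H_{\mathcal{M}}\cdot\partial_\eta\partial_x\mathbf a_{h,R}$. In the $M_2$ regime one has $\partial_x\partial_\eta H_{\mathcal{M}} = 4x\zeta = 2\sigma \sim h^{-1}$ and the chain-rule term $2x\zeta\,\partial_w^2 a \sim h^{-1}$ inside $\partial_\eta\partial_x\mathbf a_{h,R}$, so the product is $\sim h^{-2}$ and the $h^2$ prefactor returns an $O(1)$ contribution, not $o(1)$. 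The paper is aware of exactly this obstruction and deals with the four measures differently: for $M_1$, $m_3^\jmath$, $m_4$ it uses the \emph{exact} functional calculus identity
\[
\chi\big(h^2\Delta_{\mathcal{M}}-1\big) = \Op_h^{\T^2}\Big(\chi\big(\Op_h^{\R}(H_{\mathcal{M}})-1\big)\Big),
\]
which involves no Moyal remainder at all because $\Delta_{\mathcal{M}}$ is a Fourier multiplier in $(y,z)$, and for $M_2$ it explicitly states that this functional calculus cannot be used and instead runs a factorization argument based on $\Delta_{\mathcal{M}} = \mathcal{D}^*\mathcal{D} - i\mathcal{X}_3$ together with $\widehat{G}_{\bm\sigma} = \mathbf{D}_{\bm\sigma}^*\mathbf{D}_{\bm\sigma} + |\bm\sigma|$, exploiting that the commutator of the cut-offs with $\mathcal{D}$ is small (this is \eqref{e:commutator_with_sigma} in the paper). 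Treating $M_2$ ``strictly analogously'' to $M_1$, as you propose, misses that a genuinely different mechanism is required there.

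Second, even if the semiclassical Moyal remainder were handled, your identification of the $M_1$-limit with $\int\operatorname{Tr}\bigl((\widehat{H}_{\eta,\zeta}-1)\Op^{\R}_{(x,\xi)}(a)\,dM_1\bigr)$ is not correct as stated. The limit of $I_{h,R}\big((H_{\mathcal{M}}-1)a\big)$ in the $M_1$ regime produces $\int\operatorname{Tr}\bigl(\Op^{\R}_{(x,\xi)}\big((\xi^2+w^2-1)a\big)\,dM_1\bigr)$, i.e.\ the Weyl quantization of the \emph{product} symbol, and $\Op^{\R}\big((\xi^2+w^2-1)a\big)$ differs from $\tfrac12\big[(\widehat H_{\eta,\zeta}-1)\Op^{\R}(a) + \Op^{\R}(a)(\widehat H_{\eta,\zeta}-1)\big]$ by an operator-valued Moyal correction (non-zero because $\xi^2 + w(x)^2$ has degree four in $x$), whose trace against $dM_1$ is not automatically zero; invoking $[\widehat H_{\eta,\zeta},M_1]=0$ and cyclicity closes only the anticommutator, not this correction. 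The paper's proof never faces this issue because it tests against $\chi(\widehat H_{\eta,\zeta}-1)$, which is a \emph{function} of $\widehat H_{\eta,\zeta}$ and hence diagonal in the eigenbasis; the support statement then follows from positivity of $\mu_{1,k}$ and letting the cut-off $\chi$ shrink, with no Moyal algebra required at the operator-valued level either.
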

\subsubsection{Drift invariance} In this section we describe the drift  invariance properties satisfied by the two-microlocal semiclassical measures obtained in Theorem \ref{t:existence}. These invariance properties arise in a sub-principal scale with respect to the high-oscillation propagation stated in Theorem \ref{t:oscillation_invariance}. This can be interpreted in analogy with the propagation of a particle subject to the action of a strong magnetic field (see \cite{Riviere24}, \cite{Riv23}).

\begin{definition}
\label{d:reeb_vector_field}
The Reeb vector field associated with the distribution $\mathscr{D} = \operatorname{Span} \{ \mathcal{X}_1, \mathcal{X}_2 \}$ is:
$$
\mathcal{Z} := \mathcal{X}_3 + \bm{\alpha} \mathcal{X}_2, \quad  \bm{\alpha}(x) = - \frac{1}{x}, \quad x \in \R^*.
$$
Indeed, $\mathcal{Z}$ is the Reeb vector field associated with the one-form
$$
\beta := \frac{1}{2} \left(  -x \text{d}y + \frac{1}{x} \text{d} z \right).
$$
That is, $\mathscr{D} = \ker(\beta)$ and $\beta$ satisfies the conditions $\beta(\mathcal{Z}) = 1$ and $ \text{d} \beta( \mathcal{Z}, \cdot ) = 0$.
\end{definition}
In view of the high-oscillation invariance property \eqref{e:invariance_3}, for each $a \in \mathcal{C}^\infty(\R_\varsigma \times \R_\xi)$, we define:
\begin{equation}
\label{e:average_by_quartic}
\langle a \rangle_\jmath(\varsigma,\xi) :=  \lim_{T \to + \infty} \int_0^T  a \circ \phi_t^{H_\eta^{\jmath}}(\varsigma,\xi) dt.
\end{equation}
 
 The function $\langle a \rangle_\jmath$ is well defined for every $\eta \in \R$. To see this, we briefly comment  on the properties of the Hamiltonian dynamical system generated by $H_\eta^\jmath$. For $(-1)^\jmath \eta \geq 0$, the flow $\phi_t^{H_\eta^\jmath}(\varsigma,\xi)$ is periodic for each $(\varsigma,\xi) \in \R^2$ and the level set $(H_\eta^\jmath)^{-1}(E)$ consists of a unique periodic orbit for each $E \geq \eta^2$, symmetric with respect to the two axis. While if $(-1)^\jmath \eta < 0$, the flow $\phi_t^{H_\eta^\jmath}(\varsigma,\xi)$ is periodic except at the points $(\varsigma,\xi) \in \mathcal{E}_\eta^\jmath := (H_\eta^\jmath)^{-1}(\eta^2)$. In this case, for $0 \leq E < \eta^2$, the set $H_\eta^{-1}(E)$ consists of two disjoint periodic orbtis, symmetric with respect to the vertical axis $\xi = 0$, while for $E > \eta^2$, the set $(H_\eta^\jmath)^{-1}(E)$ consists of a unique periodic orbit. Finally, the set $\mathcal{E}_\eta^\jmath$ consists of two homoclinic orbits converging to $(\varsigma, \xi ) =0$. In particular, for each $(\varsigma, \xi) \in \mathcal{E}_\eta^\jmath$, 
$$
\lim_{t \to \pm \infty} \phi_t^{H_\eta^{ \jmath}}(\varsigma,\xi) = 0.
$$
Hence $\langle a \rangle_{\jmath} \vert_{\mathcal{E}^\jmath_\eta} = a(0,0)$. As a consequence of this discussion, the following holds for each $\eta \in \R$:
$$
H_\eta^{\jmath}(\varsigma_1,\xi_1) = H_\eta^{\jmath}(\varsigma_2,\xi_2) \implies \langle a \rangle_{\jmath}(\varsigma_1,\xi_1) = \langle a \rangle_{\jmath}(\varsigma_2,\xi_2).
$$
This justifies the following:
\begin{definition}
For each $\jmath \in \{0,1 \}$, and $(-1)^\jmath \eta \in (-\infty, 1]$, we set:
\begin{align}
\label{e:elliptic_integral}
\Upsilon_{\jmath}(\eta) := \langle \partial_\eta H_\eta^{\jmath} \rangle_{\jmath}(\varsigma,\xi), \quad (\varsigma,\xi) \in \big( H_\eta^{\jmath} \big)^{-1}(1).
\end{align}
\end{definition}
We next state our result concerning the drift-invariance of the two-microlocal semiclassical measures given by Theorem \ref{t:existence}.
\begin{teor}
\label{t:drift_invariance}
The measures $\mu_{1,k}$, $\mu_{2,j}$, $m_3^{\jmath}$, and $m_4$ verify respectively the following invariance properties:
\begin{align}
\label{e:drift_1}
& 0 = \partial_\eta \lambda_k(\eta,\zeta) \partial_y  \mu_{1,k}, \quad k \in \mathbb{N}, \\[0.2cm]
\label{e:drift_2}
& 0 = (2j + 1)\mathcal{Z} \mu_{2,j}, \quad j \in \mathbb{N}, \\[0.2cm]
\label{e:drift_3}
& 0 = \Upsilon_{\jmath}(\eta) \partial_y m_3^{\jmath}, \\[0.2cm]
\label{e:drift_4}
& 0 = \mathcal{Z} m_4.
\end{align}
\end{teor}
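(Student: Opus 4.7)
The plan is to exploit the eigenvalue equation \eqref{e:semiclassical_eigenvalue_problem} through the commutator identity
\[
0 = \big\langle [h^2\Delta_{\mathcal{M}}, \Op_h^{\w}(\mathbf{b}_h)] \psi_h, \psi_h\big\rangle_{L^2(M)},
\]
tested against suitable families of symbols $\mathbf{b}_h$ of the form \eqref{e:R_symbol}. The Moyal calculus gives
\[
[h^2\Delta_{\mathcal{M}}, \Op_h^{\w}(\mathbf{b}_h)] = \tfrac{h}{i}\Op_h^{\w}\!\big(\{H_{\mathcal{M}}, \mathbf{b}_h\}\big) + \mathcal{O}_{L^2\to L^2}(h^3),
\]
and sorting terms by their scaling in $h$ produces a hierarchy: the principal level reproduces the oscillation invariances of Theorem \ref{t:oscillation_invariance}, while the sub-principal level encodes the drifts \eqref{e:drift_1}--\eqref{e:drift_4}.

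For the operator-valued measures $\mu_{1,k}$ and $\mu_{2,j}$, I would test against operator-valued symbols whose fiber part is a rank-one spectral projector onto a single eigenspace of the oscillation Hamiltonian. For $\mu_{1,k}$ the fiber part is $\Pi_k(\eta,\zeta)=|\varphi_k\rangle\langle\varphi_k|$ multiplied by a scalar function $f(y,z,\eta,\zeta)$; then $[\widehat{H}_{\eta,\zeta},\Pi_k]=0$ kills the leading commutator, and the sub-principal term, read off from the explicit Poisson bracket
\[
\{H_{\mathcal{M}},b\} = 2\xi\,\partial_x b - 2w\sigma\,\partial_\xi b + 2w\,\partial_y b + 2x^2 w\,\partial_z b,
\]
reduces after tracing against $\Pi_k$ to a term proportional to $\partial_y f$ with coefficient $\operatorname{Tr}\!\big(\Pi_k\cdot \partial_\eta \widehat{H}_{\eta,\zeta}\big) = \partial_\eta \lambda_k(\eta,\zeta)$ by Feynman--Hellmann, giving \eqref{e:drift_1}. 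The parallel argument for $\mu_{2,j}$ uses the projectors onto the eigenfunctions of $\widehat{G}_\sigma$; it produces a drift along the Reeb vector field $\mathcal{Z}$ of Definition \ref{d:reeb_vector_field} (whose $1/x$ singularity is precisely what forbids extension of $M_2$ to $\mathscr{S}$), with multiplicative factor $(2j+1)$ coming from $\nu_j(\sigma)=|\sigma|(2j+1)$.

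For the scalar measures $m_3^{\jmath}$ and $m_4$ the projector trick is replaced by time averaging. Given a test symbol $a$, I would substitute its average $\langle a\rangle_{\jmath}$ from \eqref{e:average_by_quartic} (resp.\ the average along the flow of $G$); the averaged symbol lies in the kernel of $\{H_\eta^{\jmath},\cdot\}$ (resp.\ $\{G,\cdot\}$), so the leading-order term in the Moyal expansion vanishes identically, and the surviving sub-principal contribution reduces to a time integral of $\partial_\eta H_\eta^{\jmath}$ along the flow. On the energy shell $(H_\eta^{\jmath})^{-1}(1)$ this integral equals $\Upsilon_{\jmath}(\eta)$, yielding \eqref{e:drift_3}; the same scheme, with $\mathcal{Z}$ in place of $\Upsilon_{\jmath}(\eta)\partial_y$, produces \eqref{e:drift_4}.

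The hard parts are threefold. First, the Moyal remainders must be controlled uniformly in the multiple scales $(h, R, h\sigma, h^2\zeta)$; this will be handled by combining operator-valued pseudodifferential calculus with the Rothschild--Stein bounds \eqref{e:R-S_1}--\eqref{e:R-S_3}, which keep $\Op_h^{\w}(\mathbf{b}_h)\psi_h$ bounded in $L^2$. Second, smooth operator-valued projector symbols $\Pi_k(\eta,\zeta)$ must be constructed carefully near the critical locus $\{\partial_\eta \lambda_k = 0\}$ and in the limit $\zeta\to 0$; the former is vacuous for \eqref{e:drift_1}, and the latter is absorbed by the cut-off $\breve{\chi}(\zeta/R)$ and by taking the limit $R\to\infty$ last. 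Third, the time averaging for $m_3^{\jmath}$ demands particular attention on the separatrix $\mathcal{E}_\eta^{\jmath}$, where the convention $\langle a\rangle_{\jmath}|_{\mathcal{E}_\eta^{\jmath}} = a(0,0)$ discussed before \eqref{e:elliptic_integral} is what guarantees consistency of the resulting distributional identity.
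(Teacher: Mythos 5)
Your treatment of \eqref{e:drift_1} and \eqref{e:drift_3} matches the paper's: for \eqref{e:drift_1} the paper computes the trace against the eigenbasis $\{\varphi_k\}$ of $\widehat{H}_{\eta,\zeta}$ and invokes the Feynman--Hellmann identity (Lemma \ref{l:averaging_anharmonic}), which is exactly your projector argument; for \eqref{e:drift_3} it uses the time average $\langle\partial_\eta H_\eta^{\jmath}\rangle_{\jmath}=\Upsilon_{\jmath}(\eta)$ (Lemma \ref{l:averaging_elliptic}), which is your averaging argument. The only cosmetic difference is whether one inserts a projector/averaged symbol at the start or computes the trace at the end.

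However, there is a genuine gap in your treatment of \eqref{e:drift_2} and \eqref{e:drift_4}. You claim that the ``parallel argument'' for $\mu_{2,j}$ and $m_4$, i.e.\ averaging the drift coefficient $w=\tfrac12\partial_\eta H_{\mathcal{M}}$ over the harmonic oscillator flow of $\widehat{G}_\sigma$ (resp.\ the flow of $G$), produces a drift along the Reeb vector field $\mathcal{Z}$. But the average of $w$ over a harmonic-oscillator orbit is zero by symmetry, so the naive analogue of the quartic-oscillator argument simply yields $0=0$. This is precisely the distinction the paper is built around: the anharmonic quartic oscillator has $\langle\partial_\eta\widehat{H}_{\eta,\zeta}\rangle=\partial_\eta\lambda_k\neq 0$, whereas the harmonic case averages to zero at principal order. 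The Reeb invariance in \eqref{e:drift_2}--\eqref{e:drift_4} therefore does not come from the sub-principal Poisson-bracket coefficient at all; it comes from a second-order effect that the paper extracts by a normal-form deformation (Section \ref{s:normal_form}). In complex coordinates $Z=\xi+iw$, the paper replaces the symbol $\mathbf{A}$ by $\mathbf{A}+\mathbf{B}$ with
\[
\mathbf{B}=\frac{-Z}{2ih\sigma}\{\overline{Z},\mathbf{A}\}+\frac{\overline{Z}}{2ih\sigma}\{Z,\mathbf{A}\},
\]
which kills the leading oscillatory commutator and exposes the surviving term
\[
-\{Z,\{\overline{Z},\mathbf{A}\}\}+\{\overline{Z},\{Z,\mathbf{A}\}\}=2ih\,\mathcal{X}_3\mathbf{A},
\]
together with the $\{Z,\sigma\}=-h\bm{\alpha}\sigma$ contributions that produce the $\bm{\alpha}\mathcal{X}_2$ correction; these two pieces assemble exactly into the Reeb vector field $\mathcal{Z}=\mathcal{X}_3+\bm{\alpha}\mathcal{X}_2$. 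Your sketch does not produce this mechanism and, as written, would terminate with a trivial identity. To repair it you would need to introduce the normal-form corrector (or equivalently pass to the Bargmann/ladder-operator picture for $\widehat{G}_\sigma$ and track the lowest-order off-diagonal terms) rather than appeal to a direct analogue of the quartic-oscillator averaging.
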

\begin{remark}
Notice that $2j+1 = \sgn(\sigma) \partial_{\sigma} \nu_j(\sigma)$ and that \eqref{e:drift_2} just means that the measure $\mu_{2,j}$ is invariant by the flow generated by $\mathcal{Z}$. 
\end{remark}

\begin{remark}
As also observed in the study of the Schrödinger propagation on the Engel group \cite[Thm. 1.2]{Benedetto24}, condition \eqref{e:drift_1} has some consequences on the dispersive properties associated with the operator $\Delta_{\mathcal{M}}$. In view of  \eqref{e:Martinet_Montgomery}, the points $(\eta,\zeta) \in \R \times \R^*_\zeta$ such that 
$$
\partial_\eta \lambda_k(\eta,\zeta) = 0
$$
correspond to those critical points $\mu_* \in \R$ of $\Lambda_k(\mu)$, via the condition
\begin{align}
\label{e:curve_critical}
\eta = \mu_*   \sgn(\zeta) \vert \zeta \vert^{1/3}.
\end{align}
These points represent an obstruction to dispersion. Since the support of $\mu_{1,k}$ is contained in the level set $$
\lambda_k(\eta,\zeta) = \vert \zeta \vert^{2/3} \Lambda_k \left( \frac{ \eta \sgn(\zeta)}{\vert \zeta \vert^{1/3}} \right) = 1,
$$ 
there are exactly two points $(\eta_{\jmath}, \zeta_{\jmath})$, with $\jmath \in \{0,1\}$, on the support of $\mu_{1,k}$ satisfying \eqref{e:curve_critical}, namely:
\begin{equation}
\label{e:critical_points}
\zeta_{\jmath} = (-1)^\jmath \left( \frac{1}{ \Lambda_k(\mu_*)} \right)^{3/2}, \quad \eta_{\jmath} =   \mu_* (-1)^{ \jmath}  \vert \zeta_{\jmath} \vert^{1/3}.
\end{equation}
\end{remark}
By \cite[Th. 1.2]{Helffer_Leautaud22}, there is a unique critical point $\mu_* \in \R$ of $\Lambda_1(\mu)$ corresponding to a non-degenerate global minimum. Moreover, by \cite[Th. 1.5]{Helffer_Leautaud22}, there exists $k_0 \in \mathbb{N}$ such that, for all $k \geq k_0$, there is a unique critical point $\mu_* \in \R$ of $\Lambda_k(\mu)$ corresponding to a non-degenerate global minimum. It is conjectured in \cite[Conj. 1.6]{Helffer_Leautaud22} that this property is true for all eigenvalues of $\widehat{H}_\mu$, with numerical evidence provided to the case $k = 2$.

\begin{remark}
The situation is similar fot the invariance property \eqref{e:drift_3}. By Lemma \ref{l:averaging_elliptic} and Remark \ref{r:critical_point}, there exists a unique $\eta_* \in (-1,1)$ such that
\begin{equation}
\label{e:eta_critical}
\Upsilon_{\jmath}((-1)^{\jmath} \eta_*) = 0.
\end{equation}
\end{remark}

\subsubsection{Weak drift invariance} 

In this section we state some additional weak-dispersive properties of the measure $M_1$ at the critical points $(\eta_{\jmath}, \zeta_{\jmath})$ given by \eqref{e:critical_points}, assuming that these points are non-degenerate critical points of $\partial_\eta \lambda_{\mathbf{k}}$ for a given $\mathbf{k}$. However, we are not able to say anything new for the measure $m_3^{\jmath}$ near the points $(-1)^{\jmath} \eta_*$ given by \eqref{e:eta_critical}. The idea is to perform a further two-microlocalization near $(\eta,\zeta) = (\eta_\jmath,\zeta_\jmath)$ to obtain another invariance law. This idea was originally introduced by Macià \cite{Macia10} (see also \cite{AnantharamanMacia14}) in the study of quantum limits of the Laplacian on the torus, and has also been considered in the study of obstruction to dispersion in the Engel group (see \cite[Th. 1.14]{Benedetto24}).
\medskip

Let us first consider $\mathbf{k} \in \mathbb{N}$ and assume that $\mu_* \in \R$ is a non-degenerate critical point of $\Lambda_{\mathbf{k}}(\mu)$. Then, for each $\jmath \in \{0,1 \}$, the point $(\eta_\jmath , \zeta_\jmath)$ given by \eqref{e:critical_points} satisfies
$$
\lambda_{\mathbf{k}}(\eta_{\jmath},\zeta_\jmath) = 1, \quad \partial_\eta \lambda_{\mathbf{k}} (\eta_\jmath,\zeta_\jmath) = 0.
$$
Then the curve $\lambda_{\mathbf{k}}(\eta,\zeta) = 1$ is tangent to the vertical axis $\zeta = 0$ at the point $(\eta_\jmath, \zeta_\jmath)$. The next theorem gives the decomposition of the measure $\mu_{1,\mathbf{k}}$ in sum of two-microlocal semiclassical measures.

\begin{teor}
\label{t:more_two_microlocal}
Let $\mathbf{k} \in \mathbb{N}$  and suppose that $\mu_* \in \R$ is a non-degenerate critical point of $\Lambda_{\mathbf{k}}(\mu)$. For each $\jmath \in \{0,1 \}$, let $(\eta_\jmath, \zeta_\jmath)$ be the critical point given by \eqref{e:critical_points}. Fix an open neighborhood $ \mathcal{B}_\jmath \subset \R_\eta \times \R^*_\zeta$ containing the point $(\eta_\jmath, \zeta_\jmath)$ such that
$$
(\eta, \zeta) \in \mathcal{B}_\jmath \implies \lambda_k(\eta,\zeta) \neq 1, \quad \forall k \neq \mathbf{k}.
$$ 
Then, modulo the extraction of a further subsequence of the sequence $(\psi_h)$ solving \eqref{e:semiclassical_eigenvalue_problem}, there exist positive Radon measures $\mathbf{M}^\jmath_{1,\mathbf{k}} \in \mathcal{M}_+(\T^1_z \times \R^*_\zeta ; \mathcal{L}^1(L^2(\T_y)))$ and $\bm{\nu}^\jmath_{1,\mathbf{k}} \in \mathcal{M}_+(\T^2_{y,z} \times \mathbb{S}^0_\theta)$ such that, for each symbol
$$
a \in \mathcal{C}^\infty( \T^2_{y,z} \times \mathcal{B}_\jmath \times \R_v)
$$
being compactly supported in the variables $(y,z,\eta,\zeta)$, and  homogeneous of degree zero at infinity\footnote{That is, there exists $a_\infty \in \mathcal{C}^\infty(\T^2_{y,z} \times \mathcal{B}_\jmath \times \mathbb{S}_\theta^0)$ such that, there exists $R > 0$ such that
$$
a(y,z,\eta,\zeta, v) = a_\infty \left( y,z,\eta,\zeta, \frac{v}{\vert v \vert} \right), \quad \text{if } \vert v \vert \geq R.
$$} in the variable $v$, defining 
\begin{equation}
\label{e:symbol_for_new_two_microlocal}
\mathbf{a}^\jmath_{h,R,\epsilon}(y,z,\eta,\zeta) := \breve{\chi} \left( \frac{h^2\zeta}{\epsilon} \right) \breve{\chi}\left( \frac{\zeta}{R} \right) a\left(y,z,\eta,h^2 \zeta, \frac{\eta - \eta_\jmath}{h} \right),
\end{equation}
one has:
\begin{align*}
\lim_{h \to 0} \big \langle \Op_h^\w( \mathbf{a}^\jmath_{h,R,\epsilon}) \psi_h, \psi_h \big \rangle_{L^2(M)} & \\[0.2cm]
 & \hspace*{-3cm} =  \int_{\T^2_{y,z} \times \R_\eta \times \R^*_\zeta} \mathbf{1}_{\eta \neq \eta_\jmath} a_\infty \left( y,z,\eta,\zeta, \frac{\eta - \eta_\jmath}{\vert \eta - \eta_\jmath \vert} \right) d\mu_{1,\mathbf{k}} \\[0.2cm]
 & \hspace*{-3cm} \quad + \int_{\T^2_{y,z} \times \mathbb{S}^0_\theta} a_\infty\left( y,z,\eta_\jmath, \zeta_\jmath, \theta \right) d\bm{\nu}_{1,\mathbf{k}}^{\jmath} \\[0.2cm]
 & \hspace*{-3cm} \quad +  \int_{\T^1_z } \operatorname{Tr} \Big( \Op_{(y,\eta)}^{\T} \left( a\left(y,z,\eta_\jmath, \zeta_\jmath, \eta \right) \right) d \mathbf{M}_{1,\mathbf{k}}^{\jmath} \Big).
\end{align*}
In particular, by testing against symbols that do not depend on $v$, we get by projection:
$$
\mu_{1,\mathbf{k}} =\sum_{\jmath \in \{0,1\}} \left( \mathbf{1}_{\eta \neq \eta_\jmath} \mu_{1,\mathbf{k}} + \bm{\nu}^\jmath_{1,\mathbf{k}} + \operatorname{Tr} \mathbf{M}^\jmath_{1,\mathbf{k}} \right).
$$
\end{teor}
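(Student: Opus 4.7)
The proof follows the two-microlocal strategy of Macià \cite{Macia10} and Anantharaman--Macià \cite{AnantharamanMacia14}, adapted to the sub-elliptic setting as in \cite[Th. 1.14]{Benedetto24}. The idea is to partition the variable $\eta - \eta_\jmath$ at three scales: a macroscopic scale $\delta > 0$, an intermediate scale $Th$ with $T \gg 1$, and the critical scale $h$. Writing
\begin{equation*}
1 = \breve{\chi}\!\left(\tfrac{\eta-\eta_\jmath}{\delta}\right) + \chi\!\left(\tfrac{\eta-\eta_\jmath}{\delta}\right)\breve{\chi}\!\left(\tfrac{\eta-\eta_\jmath}{Th}\right) + \chi\!\left(\tfrac{\eta-\eta_\jmath}{Th}\right),
\end{equation*}
I split $\mathbf{a}^\jmath_{h,R,\epsilon}$ accordingly into three pieces whose limits as $h\to 0$, then $T\to\infty$, then $\delta\to 0$ produce the three terms of the decomposition.

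\textbf{Macroscopic zone.} On the support of $\breve{\chi}((\eta-\eta_\jmath)/\delta)$ the variable $v=(\eta-\eta_\jmath)/h$ diverges as $h\to 0$, so the homogeneity at infinity in $v$ makes $a(y,z,\eta,h^2\zeta,v)$ uniformly close to $a_\infty(y,z,\eta,h^2\zeta,\sgn(\eta-\eta_\jmath))$. This is a symbol in $\mathcal{S}_{\mathcal{M}}$ and the limit is computed via Theorem \ref{t:existence}: the cutoffs $\breve{\chi}(h^2\zeta/\epsilon)\breve{\chi}(\zeta/R)$ single out the contribution of $M_1$. The hypothesis on $\mathcal{B}_\jmath$ together with the concentration property \eqref{e:concentration_1} for $k\neq\mathbf{k}$ forces that only the spectral projector onto $\varphi_\mathbf{k}(\eta,\zeta)$ contributes, so that the trace collapses to an integral against $\mu_{1,\mathbf{k}}$. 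Sending $\delta\to 0$ by dominated convergence then yields the first term $\int\mathbf{1}_{\eta\neq\eta_\jmath} a_\infty\,d\mu_{1,\mathbf{k}}$.

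\textbf{Mesoscopic and critical zones.} On the mesoscopic piece $\chi((\eta-\eta_\jmath)/\delta)\breve{\chi}((\eta-\eta_\jmath)/(Th))$, $v$ still diverges while $(\eta,h^2\zeta)\to(\eta_\jmath,\zeta_\jmath)$, so homogeneity reduces the symbol to a function of $(y,z,\theta)$ with $\theta=\sgn(\eta-\eta_\jmath)\in\mathbb{S}^0$. A diagonal extraction combined with the sharp Gårding inequality produces a positive Radon measure $\bm\nu^\jmath_{1,\mathbf{k}}\in\mathcal{M}_+(\T^2_{y,z}\times\mathbb{S}^0_\theta)$ yielding the second term after $T\to\infty$ and $\delta\to 0$. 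On the critical piece $\chi((\eta-\eta_\jmath)/(Th))$ the variable $v$ is bounded; after freezing $(\eta,h^2\zeta)$ at $(\eta_\jmath,\zeta_\jmath)$ the symbol becomes $a(y,z,\eta_\jmath,\zeta_\jmath,v)$. The change of variable $\eta=\eta_\jmath+hv$ combined with the modulation $\psi_h\mapsto e^{-iy\eta_\jmath/h}\psi_h$, which is an $L^2$-isometry on $\T_y$, conjugates the $h$-Weyl quantization in $(y,\eta)$ to the standard Weyl quantization $\Op^{\T}_{(y,\eta)}$ at unit scale on $T^*\T_y$. A further subsequence extraction, using trace-class bounds coming from the sub-elliptic estimates \eqref{e:R-S_1}--\eqref{e:R-S_3} together with positivity, gives the operator-valued measure $\mathbf{M}^\jmath_{1,\mathbf{k}}$ and the third term. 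The final projection identity for $\mu_{1,\mathbf{k}}$ follows by testing against symbols independent of $v$, for which $a=a_\infty$.

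\textbf{Main obstacle.} The core difficulty is to justify the freezing $(\eta,h^2\zeta)\to(\eta_\jmath,\zeta_\jmath)$ uniformly in the mesoscopic and critical zones, which means showing that on the restriction of $M_1$ to $\mathcal{B}_\jmath$ only the projector onto $\varphi_\mathbf{k}(\eta,\zeta)$ contributes. This relies on a functional calculus argument for the Montgomery family $\widehat{H}_{\eta,\zeta}$, exploiting the simplicity and analyticity of $\lambda_k(\eta,\zeta)$ provided by Proposition \ref{p:simple_montgomery_martinet}, the equation $(h^2\Delta_\mathcal{M}-1)\psi_h=0$ that localizes $\psi_h$ on $\lambda_\mathbf{k}=1$ inside $\mathcal{B}_\jmath$, and the invariance \eqref{e:invariance_1}. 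The non-degeneracy of $\mu_*$ as a critical point of $\Lambda_\mathbf{k}$ guarantees that the level curve $\lambda_\mathbf{k}(\eta,\zeta)=1$ is quadratically tangent to the $\zeta$-axis at $(\eta_\jmath,\zeta_\jmath)$, so that the scales $\eta-\eta_\jmath=O(h)$ and $h^2\zeta-\zeta_\jmath=O(h^2)$ match consistently, confirming $h$ as the natural two-microlocal scale in the $\eta$-direction and making the operator-valued limit well defined.
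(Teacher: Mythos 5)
Your proposal is correct and follows essentially the same route as the paper's proof: both use the three-scale partition of $\eta-\eta_\jmath$ (macroscopic $\delta$, intermediate $Th$ — the paper writes $d$ and $hr$ respectively — and the critical $O(h)$ zone), invoke Calderón--Vaillancourt and the homogeneity of $a$ in $v$ for the first two zones, rely on the concentration property \eqref{e:concentration_1} and the choice of $\mathcal{B}_\jmath$ to isolate the $\mathbf{k}$-th band, and conjugate the quantization at the critical scale to a unit-scale Weyl quantization on $T^*\T_y$ (your explicit modulation $e^{-iy\eta_\jmath/h}$ corresponds to the paper's change of variables $\eta\mapsto\eta_\jmath+h\eta$ in the lattice sum). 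The decomposition, order of limits, and key lemmas all coincide.
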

We finally state the weak-invariance properties satisfied by the measures $\bm{\nu}_{1,\mathbf{k}}^\jmath$ and $\mathbf{M}_{1,\mathbf{k}}^\jmath$.
\begin{teor}
\label{t:weak_drift_invariance_1}
Let $\mathbf{k} \in \mathbb{N}$  and suppose that $\mu_* \in \R$ is a non-degenerate critical point of $\Lambda_{\mathbf{k}}(\mu)$. Let $ (\eta_\jmath, \zeta_\jmath)$ be given by \eqref{e:critical_points}. Then the measures $\bm{\nu}_{1,\mathbf{k}}^\jmath$ and $\mathbf{M}^\jmath_{1,\mathbf{k}}$ given by Theorem \ref{t:more_two_microlocal} satisfy the invariance properties:
\begin{align}
\label{e:weak_1}
0 & = \partial_y \bm{\nu}_{1,\mathbf{k}}^\jmath, \\[0.2cm]
\label{e:weak_2}
0 & =  [D_y^2, \mathbf{M}_{1,\mathbf{k}}^\jmath].
\end{align}
In particular, the measure $\mu_{1,\mathbf{k}}$ is constant in the variable $y$.
\end{teor}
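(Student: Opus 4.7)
The plan is to extract both invariance properties from the commutator identity
$$
0 = \bigl\langle [h^2\Delta_\mathcal{M}, \Op_h^\w(\mathbf{c})]\psi_h, \psi_h\bigr\rangle,
$$
valid for every bounded symbol $\mathbf{c} = \mathbf{c}^\jmath_{h,R,\epsilon}$ of the same two-microlocal shape as \eqref{e:symbol_for_new_two_microlocal}, by pushing the expansion one order further than in the proof of Theorem \ref{t:drift_invariance}. Semiclassical symbolic calculus gives
$$
\tfrac{1}{ih}\bigl\langle [h^2\Delta_\mathcal{M}, \Op_h^\w(\mathbf{c})]\psi_h, \psi_h\bigr\rangle = \bigl\langle \Op_h^\w(\{H_\mathcal{M}, \mathbf{c}\})\psi_h, \psi_h\bigr\rangle + O(h^2),
$$
with Poisson bracket $\{H_\mathcal{M}, \mathbf{c}\} = 2w(\partial_y + x^2\partial_z)\mathbf{c}$, where $w = \eta + x^2\zeta$. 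Projecting on the simple eigenspace $\operatorname{span}\varphi_\mathbf{k}(\cdot;\eta,\zeta)$ of $\widehat{H}_{\eta,\zeta}$ and applying the Hellmann--Feynman identity $2\langle w\varphi_\mathbf{k}, \varphi_\mathbf{k}\rangle_{L^2(\R_x)} = \partial_\eta\lambda_\mathbf{k}(\eta,\zeta)$ produces the scalar factor $\partial_\eta\lambda_\mathbf{k}$ in front of $\partial_y$. The non-degeneracy hypothesis on $\mu_\ast$ then yields the Taylor expansion
$$
\partial_\eta\lambda_\mathbf{k}(\eta_\jmath + hv, \zeta) = hv\,\partial_\eta^2\lambda_\mathbf{k}(\eta_\jmath,\zeta_\jmath) + (\zeta-\zeta_\jmath)\,\partial_\eta\partial_\zeta\lambda_\mathbf{k}(\eta_\jmath,\zeta_\jmath) + O\!\bigl(h^2 v^2 + |\zeta-\zeta_\jmath|^2\bigr),
$$
which is the common source of both \eqref{e:weak_1} and \eqref{e:weak_2}.

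For \eqref{e:weak_1}, I would test the above identity against symbols of the form $b(y,z)\,c_\infty(\theta)$, homogeneous of degree zero at infinity in $v$. On the support of $\bm{\nu}^\jmath_{1,\mathbf{k}}$ one has $|v|\to\infty$ with $\zeta\to\zeta_\jmath$, so by the Taylor expansion $\partial_\eta\lambda_\mathbf{k}$ has a definite nonzero sign $\operatorname{sgn}\bigl(\partial_\eta^2\lambda_\mathbf{k}(\eta_\jmath,\zeta_\jmath)\bigr)\theta$ on each sheet of $\mathbb{S}^0_\theta$. The classical invariance $\partial_\eta\lambda_\mathbf{k}\,\partial_y\mu_{1,\mathbf{k}} = 0$ of Theorem \ref{t:drift_invariance}, restricted to each half-neighborhood $\{\operatorname{sgn}(\eta-\eta_\jmath)=\theta\}$ of $(\eta_\jmath,\zeta_\jmath)$, can then be divided by the non-vanishing factor to give $\partial_y\mu_{1,\mathbf{k}} = 0$ locally. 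Since blow-up preserves the $y$-invariance (the rescaling $v = (\eta-\eta_\jmath)/h$ acts only on the momentum variables), passing to the blow-up limit prescribed by Theorem \ref{t:more_two_microlocal} yields $\partial_y\bm{\nu}^\jmath_{1,\mathbf{k}} = 0$.

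For \eqref{e:weak_2}, I would instead divide the commutator identity by $h$ before passing to the limit $h\to 0$. The factor $h^{-1}\partial_\eta\lambda_\mathbf{k}(\eta,\zeta)$ converges to $v\,\partial_\eta^2\lambda_\mathbf{k}(\eta_\jmath,\zeta_\jmath)$ at finite $v$, and after decoupling the three pieces of Theorem \ref{t:more_two_microlocal} the surviving $\mathbf{M}^\jmath$-contribution reads
$$
\partial_\eta^2\lambda_\mathbf{k}(\eta_\jmath,\zeta_\jmath)\int_{\T^1_z}\operatorname{Tr}\!\Bigl(\Op^{\T}_{(y,\eta)}\!\bigl(\eta\,\partial_y a(y,z,\eta_\jmath,\zeta_\jmath,\eta)\bigr)\,d\mathbf{M}^\jmath_{1,\mathbf{k}}\Bigr) = 0,
$$
for every admissible symbol $a$ as in Theorem \ref{t:more_two_microlocal}. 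Dropping the nonzero prefactor and using the exact identity $\Op^{\T}(\eta\,\partial_y a) = \tfrac{i}{2}[D_y^2,\Op^{\T}(a)]$, which holds because $\eta^2$ is polynomial of degree two in the fiber and the Moyal bracket therefore terminates, trace cyclicity gives $\operatorname{Tr}\bigl(\Op^{\T}(a)[D_y^2,\mathbf{M}^\jmath_{1,\mathbf{k}}]\bigr) = 0$ for every such $a$, whence \eqref{e:weak_2}. The main technical obstacle will be the rigorous decoupling of the three contributions $\mathbf{1}_{\eta\ne\eta_\jmath}\mu_{1,\mathbf{k}}$, $\bm{\nu}^\jmath_{1,\mathbf{k}}$ and $\operatorname{Tr}\mathbf{M}^\jmath_{1,\mathbf{k}}$ inside this limit: one must track the interplay between the cutoffs $\breve\chi(h^2\zeta/\epsilon)$ and $\breve\chi(\zeta/R)$ and the two-microlocal zoom $v = (\eta-\eta_\jmath)/h$, controlling remainders via the Rothschild--Stein estimates \eqref{e:R-S_1}--\eqref{e:R-S_3}, in the spirit of the Anantharaman--Macià blow-up analysis adapted to the Martinet setting.
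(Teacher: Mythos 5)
Your proposal misses the central technical device of the paper's proof: the normal-form (cohomological) construction. After dividing the Wigner equation by $h$, the term $\langle \Op_h^{\w}(2w\,\partial_y \mathbf{c})\psi_h,\psi_h\rangle$ involves the multiplication operator $w(x,\eta,\zeta)$ on $L^2(\R_x)$, whose matrix in the eigenbasis $\{\varphi_k\}$ of $\widehat{H}_{\eta,\zeta}$ has nonzero off-diagonal entries. Projecting onto $\operatorname{span}\varphi_{\mathbf{k}}$ and invoking Hellmann--Feynman is only legitimate in the limit $h\to 0$, because only there does the operator-valued Wigner transform become block-diagonal (a consequence of \eqref{e:invariance_1}). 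At fixed $h$ the off-diagonal contributions of $w$ to the trace are only $o(1)$, not manifestly $O(h)$, so one cannot divide by a further factor of $h$ and pass to the limit as you propose. The paper resolves this by modifying the test symbol to $\mathbf{a}^{\epsilon}_{h,R}=\bm{\chi}_{h,R,\epsilon}(\widehat{\mathcal{E}}_{h,\tau}+h\widehat{\mathcal{G}}_{h,\tau}\partial_y)a$, where $\widehat{\mathcal{G}}_{h,\tau}$ solves the cohomological equation $[\widehat{H}_h,\widehat{\mathcal{G}}_{h,\tau}]=\frac{1}{i}[\langle w\rangle_h-w,\widehat{\mathcal{E}}_{h,\tau}]^{\perp}$ issued from the quantum averaging method; the corrector $h\widehat{\mathcal{G}}_{h,\tau}\partial_y$ exactly cancels the off-diagonal part of $w$ at order $h$, leaving the diagonal operator $\langle w\rangle_h$ whose trace gives $\partial_\eta\lambda_{\mathbf{k}}(\Theta_h)$ with a quantitative $O(h^3)$ remainder, and the extra $[w,\widehat{\mathcal{G}}_\tau]^{\perp}$ contribution is shown to have vanishing diagonal part via \eqref{e:matrix_element_w}--\eqref{e:matrix_elements_G}. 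Without this correction, the factor $\partial_\eta\lambda_{\mathbf{k}}$ you are Taylor-expanding does not appear cleanly at the precommitted order, and the Rothschild--Stein estimates you mention do not address this operator-valued obstruction.

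Your argument for \eqref{e:weak_1} has a separate logical gap. From $\partial_\eta\lambda_{\mathbf{k}}\,\partial_y\mu_{1,\mathbf{k}}=0$ you correctly deduce $y$-invariance of $\mu_{1,\mathbf{k}}$ on the open set $\{\partial_\eta\lambda_{\mathbf{k}}\neq 0\}$, hence of the piece $\mathbf{1}_{\eta\neq\eta_\jmath}\mu_{1,\mathbf{k}}$. But $\bm{\nu}^{\jmath}_{1,\mathbf{k}}$ is precisely a component of the \emph{singular} mass of $\mu_{1,\mathbf{k}}$ concentrated at $(\eta_\jmath,\zeta_\jmath)$, where $\partial_\eta\lambda_{\mathbf{k}}$ vanishes; the statement $\partial_\eta\lambda_{\mathbf{k}}\,\partial_y\mu_{1,\mathbf{k}}=0$ gives no information there (as the elementary example $\mu=\delta_{\eta=0}\otimes\nu(y)$ with arbitrary $\nu$ and $\eta\,\partial_y\mu=0$ shows). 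The claim that ``blow-up preserves the $y$-invariance'' is not a general principle here: $\bm{\nu}^{\jmath}_{1,\mathbf{k}}$ is not a pushforward or restriction of $\mu_{1,\mathbf{k}}$ but a genuinely new limit object extracted from the $h$-dependent Wigner transform, and extracting its additional invariance is precisely the content of the theorem. The paper proves \eqref{e:weak_1} by again running the corrected Wigner equation, choosing $\tau=dhr$, Taylor-expanding $\partial_\eta\lambda_{\mathbf{k}}(\Theta_h)$, replacing $a$ by $a/(\eta-\eta_\jmath)$ (allowed on the support of the cut-off \eqref{e:cut_2}) and using the non-degeneracy $\Lambda''_{\mathbf{k}}(\mu_*)\neq 0$; none of this is substituted for by your division argument. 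Your final algebraic step for \eqref{e:weak_2}, namely $\Op^{\T}(\eta\,\partial_y a)=\frac{i}{2}[D_y^2,\Op^{\T}(a)]$ followed by trace cyclicity, is correct and coincides with the paper's conclusion, but you cannot reach the identity that feeds into it without the normal-form step.
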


\subsection*{Organization of the article} We show Theorem \ref{t:existence} in Section \ref{s:existence}, which is the most technical part of the work devoted to the construction of two-microlocal measures. Next, Theorem \ref{t:oscillation_invariance} is shown in Section  \ref{s:high_oscillation}. Section \ref{s:concentration} is devoted to prove Theorem \ref{t:concentration}, and Section \ref{s:Drift} to prove Theorem \ref{t:drift_invariance}. Finally, Theorems \ref{t:more_two_microlocal} and \ref{t:weak_drift_invariance_1} are proven in Section \ref{s:weak_drift}. In Appendix \ref{a:spectral_properties} we review some spectral properties of the operator $\Delta_{\mathcal{M}}$, and in Appendix \ref{a:appendix} we give some basic results on semiclassical analysis.

\subsection*{Acknowledgements} The author warmly thanks Gabriel Rivière for many conversations on the study of sub-Laplacians. This work is dedicated to Professor Didier Robert on his 80th birthday, celebrated in Nantes on May 2025. The author is supported by ANR projects SpecDiMa and STENTOR.


\section{Two microlocal semiclassical measures}
\label{s:existence}
This section is devoted to prove Theorem \ref{t:existence}. Before starting, we review some rudiments of semiclassical analysis and provide some adapted Calderón-Vaillancourt type lemmas for a family of semiclassical pseudodifferential operators including the class of symbols $\mathcal{S}_{\mathcal{M}}$ when these operators are restricted to the space of solutions to \eqref{e:semiclassical_eigenvalue_problem}. We then proceed to construct  the two-microlocal semiclassical measures which captures the energy of the sequence $(\psi_h)$ solving \eqref{e:semiclassical_eigenvalue_problem} in the different regimes associated to the degenerate directions $\mathcal{X}_3$ and $\mathcal{X}_4$.
\medskip

We denote by $(X,\Xi,X') = (x,y,z,\xi,\eta,\zeta,x',y',z') \in \R^3_X \times \R^3_{\Xi} \times \R^3_{X'}$
the phase-space variables and consider a family of symbols $\mathbf{a} \in \mathcal{C}^\infty( \R^3_{X} \times \R^3_\Xi \times \R^3_{X'})$ being $2\pi$-periodic in the variables $(y,z,y',z')$, and slowly growing in all variables, that is:
\begin{equation}
\label{e:slowly_growing}
\forall \alpha \in \mathbb{N}^3, \; \exists C_\alpha > 0, \; \exists N_\alpha \in \mathbb{Z}, \quad \underset{(X,\Xi,X') \in \R^9}{\sup} \big(1 + \vert \Xi \vert + \vert X \vert + \vert X' \vert \big)^{-N_\alpha}  \big \vert \partial^\alpha  \mathbf{a} \big \vert  \leq C_\alpha .
\end{equation}
For such symbols, we define the semiclassical quantization $\Op_h^{M}(\mathbf{a})$ acting on $\psi \in \mathcal{\mathscr{S}}(M)$ by:
\begin{align*}
\Op_h^{M}(\mathbf{a})\psi(X) & :=  \int_{\R^3 \times \R^3} \mathbf{a}(X,h\Xi,X') \psi(X') e^{i \Xi \cdot(X - X' )} \text{d}(X',\Xi)
\end{align*}
with respect to the measure
\begin{equation}
\label{e:measure}
\text{d}(X',\Xi) :=  dx' \otimes \frac{\mathbf{1}_{\T^2}(y',z')}{(2\pi)^2} dy'dz'  \otimes d\xi \otimes \left(\sum_{k \in \mathbb{Z}^2} \delta\big( (\eta,\zeta) - k \big) \right),
\end{equation}
where $\mathbf{1}_{\T^2}$ denotes the indicator function of any fundamental domain of $\T^2$ as a subset of $\R^2$. The operator $\Op_h^{M}(\mathbf{a})$ defined in this way has to be interpreted in distributional sense as map from $\mathscr{S}(M)$ to $\mathscr{S}'(M)$ (see for instance \cite[Thm. 4.2]{Zworski12}).
\begin{example}
If $\mathbf{a}(X,\Xi,X') = a\left( \frac{X + X'}{2},\Xi \right)$ then we recover the semiclassical Weyl quantization on $M$, and we denote in this case $\Op_h^M(\mathbf{a}) = \Op_h^{\w}(a)$.
\end{example}
We  next define the measure:
\begin{equation}
\label{e:measure_2}
\text{d}(X',\Xi,X) := \text{d}(X',\Xi) \otimes  dx \otimes \frac{\mathbf{1}_{\T^2}(y,z)}{(2\pi)^2} dy \, dz,
\end{equation}
so that we can write, for $(\psi_h)$ solving \eqref{e:semiclassical_eigenvalue_problem}, the Wigner-type distribution
\begin{align}
\label{e:Wigner_distribution}
\big \langle \Op^{M}_h(\mathbf{a})\psi_h , \psi_h \big \rangle_{L^2(M)} =  \int_{\R^3 \times \R^3 \times \R^3} \mathbf{a}(X,h\Xi,X') \psi_h(X') \overline{\psi}_h(X) e^{i \Xi \cdot(X - X' )} \text{d}(X',\Xi,X). 
\end{align}
Our aim is to study the Wigner distribution \eqref{e:Wigner_distribution} with $\mathbf{a} = \mathbf{a}_{h,R}$ given by \eqref{e:R_symbol}, and we will estimate it using Calderón-Vaillancourt theorem (Lemma \ref{l:Calderon_Vaillancourt}) and Rothschild-Stein estimates \eqref{e:R-S_1}, \eqref{e:R-S_2}, and \eqref{e:R-S_3}.  To do this, let us introduce a change of coordinates balancing the position and momentum variables. Set:
$$
\bm{\kappa}_h :   M \times \R^3 \times M   \rightarrow  \R_x \times h^{-1/2} \mathbb{T}^2_{(y,z)} \times \R^3_{(\xi,\eta,\zeta)} \times \R_{x'} \times h^{-1/2} \mathbb{T}^2_{(y',z')} 
$$
given by dilation-contraction map
\begin{align}
\label{e:symplectic_change_of_coordinates}
\bm{\kappa}_h(X,\Xi,X') &  = \Big( hx,h^{1/2}y, h^{1/2}z, h^{-1}\xi,h^{-1/2} \eta, h^{-1/2} \zeta, hx', h^{1/2}y', h^{1/2} z' \Big).
\end{align}
Define also the unitary transformation quantizing the diffeomorphism $\bm{\kappa}_h$ (which we restrict to $\psi_h$ for simplicity). Set:
\begin{equation}
\label{e:h_hilbert_spaces}
 \psi_h \mapsto \Psi_h \in L^2(\R_x \times h^{-1/2} \T^2_{y,z})
 \end{equation}
 given by the formula
\begin{equation}
\label{e:unitary_transformation}
\Psi_h(x,y,z) := h \psi_h \big( hx,h^{1/2}y,h^{1/2}z \big).
\end{equation}
Then, since $\bm{\kappa}_h$ is a linear symplectic transformation, by using the exact version of Egorov's theorem,  we can rescale the Wigner distribution \eqref{e:Wigner_distribution} as:
$$
\big \langle \Op_h^{M}(\mathbf{a})\psi_h , \psi_h \big \rangle_{L^2(M)} =  \int_{\R^3 \times \R^3 \times \R^3} \mathbf{a} \circ \bm{\kappa}_h(X,h\Xi,X') \Psi_h(X') \overline{\Psi}_h(X) e^{i \Xi \cdot(X - X' )} \text{d}_h(X',\Xi,X),
$$
where the measure $\text{d}_h$ is now given by
\begin{align}
\label{e:change_measure_iota}
\text{d}_h(X',\Xi,X) = \big( \bm{\kappa}_h \big)_* \text{d}(X',\Xi,X).
\end{align}

\begin{lemma}
\label{l:control_L^2_norm}
Let $a \in \mathcal{C}^\infty(M_{x,y,z} \times \R^4_{w,\xi, \sigma,\zeta} \times M_{x',y',z'})$ be bounded together with all its derivatives.  Define, for $R> 0$ and $h > 0$:
$$
\mathbf{a}_{h,R}(X,\Xi,X') = \breve{\chi}\left( \frac{\zeta}{R} \right) a(x,y,z,w,\xi,h \sigma, h^2\zeta,x',y',z'),
$$ 
where $\sigma = \sigma(x,\zeta) = 2x \zeta$ and $w = w(x,\eta,\zeta) = \eta + x^2 \zeta$. Then, for any sequence $(\psi_h)$ solving \eqref{e:semiclassical_eigenvalue_problem},
\begin{equation}
\label{e:control_in_h_psi}
\Vert \Op^{M}_h(\mathbf{a}_{h,R}) \psi_h \Vert_{L^2(M)} \lesssim_{a,R} 1, \quad \text{as } h \to 0.
\end{equation}
\end{lemma}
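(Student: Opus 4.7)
The idea is to use the symplectic rescaling $\bm{\kappa}_h$ of \eqref{e:symplectic_change_of_coordinates} to convert the problem to a fixed-scale setting, and then apply an adapted Calderón-Vaillancourt theorem. The key observation is that the symbol $\mathbf{a}_{h,R}$ depends on the dual variable $\Xi=(\xi,\eta,\zeta)$ only through the four combinations $\xi$, $w=\eta+x^2\zeta$, $h\sigma=2hx\zeta$ and $h^2\zeta$, whose Weyl quantizations are, up to constant factors, precisely $h\mathcal{X}_1$, $h\mathcal{X}_2$, $h^2\mathcal{X}_3$ and $h^3\mathcal{X}_4$. These are exactly the operators controlled by the Rothschild-Stein estimates \eqref{e:R-S_1}--\eqref{e:R-S_3} on $\psi_h$, and the transformation $\bm{\kappa}_h$ is engineered so that each of these scales becomes $O(1)$ in the new coordinates.

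\textbf{Execution.} First, apply the unitary $U_h\psi_h=\Psi_h$ of \eqref{e:unitary_transformation}, which maps isometrically $L^2(M)$ onto $L^2(\tilde M)$ with $\tilde M=\R_x\times h^{-1/2}\T^2_{y,z}$, so that $\|\Psi_h\|_{L^2(\tilde M)}=1$. By exact Egorov for the linear symplectic map $\bm{\kappa}_h$ one has $U_h\Op_h^M(\mathbf{a}_{h,R})U_h^{-1}=\Op_h^{\tilde M}(\tilde{\mathbf{a}}_{h,R})$ with $\tilde{\mathbf{a}}_{h,R}=\mathbf{a}_{h,R}\circ\bm{\kappa}_h$, and a direct substitution using the definition of $\bm{\kappa}_h$ gives
$$
\tilde{\mathbf{a}}_{h,R}(X,\Xi,X')=\breve{\chi}\!\left(\tfrac{h^{1/2}\zeta}{R}\right)a\bigl(hx,h^{1/2}y,h^{1/2}z,\,h^{1/2}(\eta+h^2x^2\zeta),\,\xi,\,2h^{5/2}x\zeta,\,h^{5/2}\zeta,\,hx',h^{1/2}y',h^{1/2}z'\bigr).
$$
The next step is to verify that $\tilde{\mathbf{a}}_{h,R}$ belongs, uniformly in $h\in(0,1]$, to a Hörmander-Beals symbol class $S(1,g)$ associated with a metric $g$ on $T^*\tilde M$ adapted to the Martinet geometry. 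A semiclassical Calderón-Vaillancourt theorem for this class then yields $\|\Op_h^{\tilde M}(\tilde{\mathbf{a}}_{h,R})\|_{L^2(\tilde M)\to L^2(\tilde M)}\lesssim_{a,R}1$, and unitarity of $U_h$ concludes:
$$
\|\Op_h^M(\mathbf{a}_{h,R})\psi_h\|_{L^2(M)}=\|\Op_h^{\tilde M}(\tilde{\mathbf{a}}_{h,R})\Psi_h\|_{L^2(\tilde M)}\lesssim_{a,R}1.
$$

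\textbf{Main obstacle.} The delicate point is the uniform symbol estimate. Since the $w$-argument of $a$ in $\tilde{\mathbf{a}}_{h,R}$ contains the product $h^{5/2}x^2\zeta$, naive $\partial_\zeta$-derivatives produce a factor $h^{5/2}x^2$ that is unbounded over $\R_x$, so $\tilde{\mathbf{a}}_{h,R}$ does not belong to the standard class $S(1,dX^2+d\Xi^2)$. The metric $g$ must therefore weight the $d\zeta$ direction against a factor that accommodates the $x^2$ growth, and one must check that such a $g$ is slowly varying and temperate uniformly in $h$, and that every higher-order derivative of $\tilde{\mathbf{a}}_{h,R}$ measured in this metric remains bounded. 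The explicit factors $h^{5/2}$ present in the $h\sigma$- and $h^2\zeta$-slots of $a$, together with the cutoff $\breve{\chi}(h^{1/2}\zeta/R)$ confining $\zeta$ to $|\zeta|\gtrsim R/(2h^{1/2})$, are precisely what closes these verifications at every order.
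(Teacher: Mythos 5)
There is a genuine gap. Your execution reduces the claim to a uniform operator-norm bound $\|\Op_h^{\tilde M}(\tilde{\mathbf a}_{h,R})\|_{\mathcal L(L^2)}\lesssim 1$, but you never establish it, and it is almost certainly false: the Remark immediately following the lemma in the paper states that $\mathbf a_{h,R}$ does not belong to any class $S(m)$ and that the bound holds only when the operator acts on solutions of \eqref{e:semiclassical_eigenvalue_problem}. Indeed, although your plan names the Rothschild--Stein estimates \eqref{e:R-S_1}--\eqref{e:R-S_3} as ``the key observation,'' your execution never uses the equation on $\psi_h$ at all, so the equation plays no role in your argument. Your ``main obstacle'' paragraph also does not close: $\partial_x$ and $\partial_\zeta$ of $\tilde{\mathbf a}_{h,R}$ produce factors $h^{5/2}\zeta$, $h^{5/2}x\zeta$, $h^{5/2}x^2$, $h^{5/2}x$ which are unbounded as $|x|,|\zeta|\to\infty$; the cutoff $\breve{\chi}(h^{1/2}\zeta/R)$ is $1-\chi$ and hence supported on $|\zeta|\gtrsim R/h^{1/2}$, so it is a \emph{lower} bound on $\zeta$ and cannot tame the dangerous large-$\zeta$ regime, while there is no localization in $x$ anywhere in $\mathbf a_{h,R}$. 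No slowly varying temperate metric can absorb polynomial growth in $x$ at every order of differentiation without such a localization, so the verification you defer cannot succeed.

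What the paper actually does is structurally different: writing $|\Sigma_h|=\sqrt{1+(h\sigma)^2+(h^2\zeta)^2}$, it factors
$$
\Op_h^M(\mathbf a_{h,R}) = \Op_h^M\!\left(\frac{\mathbf a_{h,R}}{|\Sigma_h|}\right)\Op_h^M(|\Sigma_h|) + \Op_h^M(\mathbf b_{h,R}\cdot\mathcal E_h).
$$
Dividing by $|\Sigma_h|$ yields a symbol that decays in $h\sigma$ and $h^2\zeta$, and that decay is precisely what makes the adapted Calder\'on--Vaillancourt estimate (Lemma \ref{l:Calderon_Vaillancourt}) applicable to the first factor; the second factor is the exact square root $\sqrt{1+(-ih^2\mathcal X_3)^2+(-ih^3\mathcal X_4)^2}$, which is bounded when applied to $\psi_h$ by the Rothschild--Stein estimates \eqref{e:R-S_2}--\eqref{e:R-S_3}, and the remainder $\mathcal E_h$ is absorbed by integration by parts together with the same estimates. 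This factorization, which is where the equation for $\psi_h$ enters, is the idea your proposal is missing; a pure operator-norm argument in rescaled coordinates cannot replace it.
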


\begin{remark}
Notice that the symbol $\mathbf{a}_{h,R}$ is slowly growing in the sense of \eqref{e:slowly_growing}, but it does not lie in any symbol class $S(m)$ with $m$ an order function in the sense of \cite[\S 4.4]{Zworski12}. However, Lemma \ref{l:control_L^2_norm} shows that the operator $\Op^{M}_h(\mathbf{a}_{h,R})$  is uniformly bounded in the $L^2$-norm when restricted to the space of solutions to \eqref{e:semiclassical_eigenvalue_problem}.
\end{remark}

\begin{remark}
The constant in estimate \eqref{e:control_in_h_psi}, although not written explicitely, depends only on a finite sum of $L^\infty$ norms of derivatives of $a$ and dimensional factors.
\end{remark}

\begin{proof}
We will make use of estimates \eqref{e:R-S_2} and \eqref{e:R-S_3} together with Calderón-Vaillancourt theorem (see precisely Lemma \ref{l:Calderon_Vaillancourt}). 
Let us define:
$$
\Sigma_h := (h\sigma,h^2\zeta), \quad  \vert \Sigma_h \vert = \sqrt{1 +  (h\sigma)^2  +  (h^2 \zeta)^2}.
$$
By using the semiclassical symbolic calculus, we can expand:
\begin{align*}
\Op^M_h \big( \mathbf{a}_{h,R} \big) & = \Op^M_h \big( \mathbf{b}_{h,R} \big) \Op^M_h\big( \vert \Sigma_h \vert \big) + \Op^M_h \big( \mathbf{b}_{h,R} \cdot \mathcal{E}_h \big), 
\end{align*}
where 
$$
\mathbf{b}_{h,R} = \frac{ \mathbf{a}_{h,R}}{ \vert \Sigma_h \vert}, \quad \mathcal{E}_h(x,x',\zeta)  = \vert \Sigma_h(x,\zeta) \vert - \vert \Sigma_h(x',\zeta) \vert.
$$
By Taylor's theorem,
\begin{align*}
\mathcal{E}_h(x,x',\zeta) & = \frac{ \vert \Sigma_h(x,\zeta) \vert^2 - \vert \Sigma_h(x',\zeta) \vert^2}{ \vert \Sigma_h(x,\zeta)\vert  + \vert \Sigma_h(x',\zeta) \vert} \\[0.2cm]
 & =  \frac{ (x-x') \mathcal{R}_1(x,x') (h\zeta) (h\sigma(x',\zeta))}{  \vert \Sigma_h(x,\zeta)\vert  + \vert \Sigma_h(x',\zeta) \vert} + \frac{(x-x')^2 \mathcal{R}_2(x,x') (h\zeta)^2}{  \vert \Sigma_h(x,\zeta)\vert  + \vert \Sigma_h(x',\zeta) \vert}, 
\end{align*}
for some $\mathcal{R}_1,\mathcal{R}_2 \in \mathcal{C}^\infty(\T_x \times \T_{x'})$.
Now, by Lemma \ref{l:Calderon_Vaillancourt} we have:
$$
\left \Vert \Op_h \big( \mathbf{b}_{h,R} \big)  \right \Vert_{\mathcal{L}(L^2(M))} \lesssim_{a,R} 1,
$$
where notice that, in Lemma \ref{l:Calderon_Vaillancourt}, only derivatives of order one are taken with respect to the variables $x$ and $x'$. Moreover, we observe that 
$$
\Op_h^{M}(h\sigma) = -i h^2 \mathcal{X}_3,  \quad\quad \Op^{M}_h(h^2 \zeta) = - i h^3 \mathcal{X}_4.
$$
Hence, since no dual variables appear at same time in the symbols $\sigma$ and $\zeta$, we have the exact quantization formula
$$
\Op^{M}_h(\vert \Sigma_h \vert) = \sqrt{  1 + (-ih^2 \mathcal{X}_3)^2 + (- i h^3 \mathcal{X}_4)^2 }.
$$
Then, by estimates \eqref{e:R-S_2} and \eqref{e:R-S_3}, we obtain:
$$
\left \Vert  \Op^{M}_h(\vert \Sigma_h \vert) \psi_h \right \Vert_{L^2(M)} \lesssim 1.
$$
 On the other hand, defining the error terms
 \begin{align*}
 \mathbf{R}_1(x,x',\zeta) & :=  \frac{  \mathcal{R}_1(x,x')(h^2\zeta) (h\sigma(x',\zeta))}{  \vert \Sigma_h(x,\zeta)\vert  + \vert \Sigma_h(x',\zeta) \vert}, \\[0.2cm]
  \mathbf{R}_2(x,x',\zeta) & := \frac{\mathcal{R}_2(x,x') (h^2\zeta)^2}{  \vert \Sigma_h(x,\zeta)\vert  + \vert \Sigma_h(x',\zeta) \vert},
 \end{align*}
observing that, for each $(x,\xi,x') \in \R^3$, we have
 $$
D_\xi e^{i \xi (x-x') } =  (x-x') \cdot e^{i \xi (x-x') },
 $$  
and using integration by parts we can write, for any $\varphi \in L^2(M)$:
 \begin{align*}
 \Big \langle \Op^{M}_h \big( \mathbf{b}_{h,R} \cdot \mathcal{E}_h \big)  \psi_h, \varphi \Big \rangle_{L^2(M)} & \\[0.2cm]
  & \hspace*{-3.5cm} = \Big \langle \Op^{M}_h \big(-D_\xi \mathbf{b}_{h,R} \cdot \mathbf{R}_1 \big) \, \psi_h, \varphi \Big \rangle_{L^2(M)} +  \Big \langle \Op^{M}_h \big( D_\xi^2\mathbf{b}_{h,R} \cdot \mathbf{R}_2 \big) \, \psi_h, \varphi \Big \rangle_{L^2(M)}.
 \end{align*}
Finally, using estimates \eqref{e:R-S_2}, \eqref{e:R-S_3} and Lemma \ref{l:Calderon_Vaillancourt}, we get:
$$
\left \vert \Big \langle \Op^{M}_h \big( \mathbf{b}_{h,R} \cdot \mathcal{E}_h \big)  \psi_h, \varphi \Big \rangle_{L^2(M)} \right \vert \lesssim_{a,R} \Vert \psi_h \Vert_{L^2(M)} \Vert \varphi \Vert_{L^2(M)},
$$
and thus the claim.
\end{proof}

\subsection{$h$-oscillation properties}

We next state some $h$-oscillation properties of the sequence $(\psi_h)$ of solutions to \eqref{e:semiclassical_eigenvalue_problem}. By $h$-oscillation we mean that the energy of the sequence concentrates asymptotically on some regions of the phase-space. Since $(\psi_h)$ solves \eqref{e:semiclassical_eigenvalue_problem}, we first observe that the sequence $(\psi_h)$ concentrates on the region $H_{\mathcal{M}} \lesssim 1$, and we will also show $h$-oscillation in the regions $h \sigma \lesssim 1$ and $h^2 \zeta \lesssim 1$, which are consequences respectively of Rothschild-Stein estimates \eqref{e:R-S_2} and \eqref{e:R-S_3}. From now on, we will restrict ourselves to the Weyl quantization, and we will use the notation $\Op_h^{\w}$ for the Weyl quantization on $M$.

\begin{definition}
Let $\chi$ be the cut-off function fixed at the beginning of Secion \ref{s:main_results}. Let $(\varphi_h) \subset L^2(M)$ be a normalized sequence. We say that $(\varphi_h)$ is $(h^2\Delta_{\mathcal{M}})$-oscillating if:
$$
\lim_{R \to + \infty} \lim_{h \to 0} \left \Vert \chi\left( \frac{h^2 \Delta_{\mathcal{M}}}{R} \right) \varphi_h \right \Vert_{L^2(M)} = 1.
$$
\end{definition}

\begin{prop}
\label{p:h_oscillation}
Any sequence $(\psi_h)$ satisfying \eqref{e:semiclassical_eigenvalue_problem} is $(h^2\Delta_{\mathcal{M}})$-oscillating. Moreover,
\begin{align}
\label{e:h_oscillation_1}
0 & = \lim_{R' \to + \infty} \lim_{R \to + \infty} \lim_{h \to 0} \left \Vert  \Op^{\w}_h \left( \breve{\chi}\left( \frac{\xi^2 + (\eta + x^2 \zeta)^2}{R} \right) \right) \breve{\chi}\left( \frac{hD_z}{R'} \right) \psi_h \right \Vert_{L^2(M)}, \\[0.2cm]
\label{e:h_oscillation_2}
0 & = \lim_{R' \to + \infty} \lim_{R \to + \infty} \lim_{h \to 0} \left \Vert \Op^{\w}_h \left( \breve{\chi} \left( \frac{ h \sigma(x,\zeta)}{R} \right) \right) \breve{\chi}\left( \frac{hD_z}{R'} \right) \psi_h  \right \Vert_{L^2(M)}, \\[0.2cm]
\label{e:h_oscillation_3}
0 & =  \lim_{R \to + \infty} \lim_{h \to 0} \left \Vert  \breve{\chi}\left( \frac{h^3D_z}{R} \right) \psi_h \right \Vert_{L^2(M)}.
\end{align}
\end{prop}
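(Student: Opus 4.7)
The strategy is to combine the Rothschild-Stein bounds \eqref{e:R-S_2}, \eqref{e:R-S_3} with the eigenvalue equation \eqref{e:semiclassical_eigenvalue_problem} and the pointwise Chebyshev-type inequality
$$
|\breve{\chi}(t/R)|^2 \leq \mathbf{1}_{|t|\geq R/2} \leq \frac{4 t^2}{R^2}, \qquad t \in \R,
$$
which follows from the fact that $\breve{\chi} = 1 - \chi$ vanishes on $[-\tfrac12, \tfrac12]$. The $(h^2\Delta_{\mathcal{M}})$-oscillating property is then immediate by the spectral theorem: since $h^2\Delta_{\mathcal{M}}\psi_h = \psi_h$ and $\chi(1/R) = 1$ for $R \geq 2$, one has $\chi(h^2\Delta_{\mathcal{M}}/R)\psi_h = \psi_h$, of norm $1$.

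For \eqref{e:h_oscillation_1} I would first observe that the Moyal product $(\eta + x^2\zeta)\#(\eta + x^2\zeta)$ carries no subleading corrections, because $\eta + x^2\zeta$ is linear in the dual variables $(\eta,\zeta)$ and the only non-trivial mixed derivative $\partial_x\partial_\zeta$ pairs in the order-$\hbar^2$ Moyal formula with $\partial_z\partial_\xi \equiv 0$; higher-order terms require either two $(\eta,\zeta)$-derivatives or a derivative in $(y,z)$, both of which vanish. Consequently $\Op_h^{\w}(H_{\mathcal{M}}) = h^2\Delta_{\mathcal{M}}$ exactly. Applying the semiclassical functional calculus (Helffer-Sjöstrand) to $\chi \in \mathcal{C}_c^\infty(\R)$ then yields
$$
\Op_h^{\w}\big(\breve{\chi}(H_{\mathcal{M}}/R)\big) = \breve{\chi}\big(h^2\Delta_{\mathcal{M}}/R\big) + \mathcal{O}_{L^2 \to L^2}(h).
$$
Since the symbol is $(y,z)$-independent, the operator on the left commutes with the Fourier multiplier $\breve{\chi}(hD_z/R')$, whose $L^2 \to L^2$ norm is $\leq 1$. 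Applying it to $\breve{\chi}(hD_z/R')\psi_h$, which remains an eigenfunction of $h^2\Delta_{\mathcal{M}}$ for the eigenvalue $1$ because $[\Delta_{\mathcal{M}}, \partial_z] = 0$, produces $\breve{\chi}(1/R)\breve{\chi}(hD_z/R')\psi_h + \mathcal{O}(h) = \mathcal{O}(h)$ for $R \geq 2$, proving \eqref{e:h_oscillation_1}.

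For \eqref{e:h_oscillation_2} and \eqref{e:h_oscillation_3} I would decompose $\psi_h = \sum_{n \in \mathbb{Z}^*}\psi_h^{(n)}$ in $z$-Fourier modes, the $n = 0$ component being zero by the definition of $L^2_0(M)$. Under semiclassical Weyl quantization, the dual variable $\zeta$ acts as $hn$ on the $n$-th mode, so $\Op_h^{\w}(\breve{\chi}(h\sigma/R))$ acts by multiplication in $x$ by $\breve{\chi}(2h^2 x n/R)$ and $\breve{\chi}(h^3 D_z/R)$ by the scalar $\breve{\chi}(h^3 n/R)$. Applying the Chebyshev bound above, Plancherel in $z$, and $\Vert \breve{\chi}(hD_z/R')\Vert_{L^2 \to L^2} \leq 1$, one obtains
\begin{align*}
\Vert \Op_h^{\w}(\breve{\chi}(h\sigma/R))\breve{\chi}(hD_z/R')\psi_h \Vert_{L^2(M)}^2
& \leq \frac{16}{R^2}\sum_{n \in \mathbb{Z}^*}(h^2 n)^2 \Vert x\psi_h^{(n)}\Vert^2 \\[0.2cm]
& = \frac{4}{R^2}\Vert h^2 \mathcal{X}_3 \psi_h \Vert_{L^2(M)}^2 \lesssim \frac{1}{R^2}, \\[0.2cm]
\Vert \breve{\chi}(h^3 D_z/R)\psi_h\Vert_{L^2(M)}^2
& \leq \frac{16}{R^2}\sum_{n \in \mathbb{Z}^*}(h^3 n)^2 \Vert \psi_h^{(n)}\Vert^2 = \frac{4}{R^2}\Vert h^3 \mathcal{X}_4 \psi_h \Vert_{L^2(M)}^2 \lesssim \frac{1}{R^2},
\end{align*}
by \eqref{e:R-S_2}, \eqref{e:R-S_3} and the identities $\mathcal{X}_3 = 2x\partial_z$, $\mathcal{X}_4 = 2\partial_z$. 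The iterated limits then give the claimed $0$.

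The main technical obstacle is in the functional calculus step for \eqref{e:h_oscillation_1}, because $h^2\Delta_{\mathcal{M}}$ is unbounded and only sub-elliptic, not elliptic, so Beals-type criteria for semiclassical functional calculus require some adaptation; once one uses that $\Op_h^{\w}(H_{\mathcal{M}}) = h^2\Delta_{\mathcal{M}}$ exactly and that $\chi \in \mathcal{C}_c^\infty$ localizes the spectral parameter to a bounded interval, the Helffer-Sjöstrand representation and the resolvent identity deliver the $\mathcal{O}(h)$ remainder in the standard manner. The remaining conceptual point is the semiclassical correspondence $\zeta \leftrightarrow hn$ on $z$-Fourier modes, which introduces the extra factor of $h$ that makes the Rothschild-Stein bound on $h^2\mathcal{X}_3$ (rather than $h\mathcal{X}_3$) exactly the right input to control $\Op_h^{\w}(\breve{\chi}(h\sigma/R))$.
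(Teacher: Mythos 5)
Your treatment of \eqref{e:h_oscillation_2} and \eqref{e:h_oscillation_3} is correct and in fact cleaner than the paper's argument, which handles all three estimates by the same pseudodifferential factorization and Calderón--Vaillancourt machinery. Your key observation is that both $\Op_h^{\w}(\breve{\chi}(h\sigma/R))$ and $\breve{\chi}(h^3D_z/R)$ act diagonally in the $z$-Fourier decomposition, as multiplication by $\breve{\chi}(2h^2 xn/R)$ and by the scalar $\breve{\chi}(h^3 n/R)$ on the $n$-th mode (no Moyal corrections since $x$ and $D_z$ commute), after which the Chebyshev bound converts the cut-off into exactly the $h^2\mathcal{X}_3$ and $h^3\mathcal{X}_4$ factors that Rothschild--Stein \eqref{e:R-S_2}, \eqref{e:R-S_3} control. (Small arithmetic slip in the $\mathcal{X}_4$ computation: the Chebyshev constant should be $4/R^2$, not $16/R^2$, so the final line reads $\frac{1}{R^2}\Vert h^3\mathcal{X}_4\psi_h\Vert^2$; the conclusion $\lesssim 1/R^2$ is unaffected.) Likewise your verification that $\Op_h^{\w}(H_{\mathcal{M}}) = h^2\Delta_{\mathcal{M}}$ with no Moyal remainder is correct and worth having made explicit; the paper uses the same fact implicitly.

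The gap is in \eqref{e:h_oscillation_1}. You invoke Helffer--Sjöstrand to assert $\Op_h^{\w}(\breve{\chi}(H_{\mathcal{M}}/R)) = \breve{\chi}(h^2\Delta_{\mathcal{M}}/R) + \mathcal{O}_{L^2\to L^2}(h)$, and flag that this "requires some adaptation" but then claim it goes through "in the standard manner." It does not: the symbol $\breve{\chi}(H_{\mathcal{M}}/R)$ lies in no usable symbol class. On the support of $\breve{\chi}'$ one has $|\eta+x^2\zeta|\lesssim\sqrt{R}$, but $\partial_x\breve{\chi}(H_{\mathcal{M}}/R) = \tfrac{2}{R}\breve{\chi}'(\cdot)\,w\sigma$ and $\partial_\zeta\breve{\chi}(H_{\mathcal{M}}/R)=\tfrac{2}{R}\breve{\chi}'(\cdot)\,w x^2$ are unbounded in $\sigma=2x\zeta$ and $x$, so neither Calderón--Vaillancourt nor the resolvent-remainder estimate in the Helffer--Sjöstrand formula gives a uniform $\mathcal{O}(h)$ operator-norm bound. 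This is precisely the sub-elliptic pathology, and it is why the paper does not use functional calculus here. Instead the paper factors $\breve{\chi}(H_{\mathcal{M}}/R) = \tfrac{\breve{\chi}(H_{\mathcal{M}}/R)}{H_{\mathcal{M}}}\cdot H_{\mathcal{M}}$ in the $\Op_h^M$ quantization, uses the eigenvalue equation on the $H_{\mathcal{M}}$-factor, and then controls the factor $\tfrac{\breve{\chi}(H_{\mathcal{M}}/R)}{H_{\mathcal{M}}}$ (bounded by $2/R$) and the commutation error $\mathcal{E}_{\mathcal{M}}$ (a polynomial in $(x-x')\sigma$ and $(x-x')^2\zeta$ which, after integration by parts in $\xi$, becomes $h\sigma$ and $h^2\zeta$) via Lemma~\ref{l:Calderon_Vaillancourt} and Rothschild--Stein \eqref{e:R-S_1}--\eqref{e:R-S_3}. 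The $\breve{\chi}(hD_z/R')$ localization is what makes the error term produce the $\mathcal{O}(1/R')$ contribution, and is not disposable; the fact that your argument, if it worked, would render the $R'$ limit vacuous is itself a signal that something is missing. To repair your proof you would need to replace the functional calculus step by a factorization of this type and feed the error term into the sub-elliptic bounds, rather than asserting an operator-norm estimate.
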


\begin{proof}
The fact that the sequence $(\psi_h)$ is $(h^2\Delta_{\mathcal{M}})$-oscillating is evident by using \eqref{e:semiclassical_eigenvalue_problem}. To show \eqref{e:h_oscillation_1}, let $H_{\mathcal{M}}(X,\Xi) = \xi^2 + (\eta + x^2 \zeta)^2$ be the symbol of $h^2 \Delta_{\mathcal{M}}$. Denoting $\chi_R := \chi(\cdot/R)$, we observe that:
\begin{align*}
\Op^{\w}_h \big( 1 - \chi_R(H_{\mathcal{M}}) \big)  & = \Op^{\w}_h \left( \frac{1 - \chi_R(H_{\mathcal{M}})}{H_\mathcal{M}} \cdot H_\mathcal{M} \right)  &  \\[0.2cm]
 &  = \Op^{\w}_h \left( \frac{1 - \chi_R(H_\mathcal{M})}{H_\mathcal{M}} \right) (h^2 \Delta_{\mathcal{M}}) +  \Op^{M}_h \left( \frac{1 - \chi_R(\mathbf{H}_\mathcal{M})}{\mathbf{H}_\mathcal{M}} \cdot \mathcal{E}_{\mathcal{M}} \right)  \\[0.2cm]
 & =  \Op^{M}_h \left( \frac{1 - \chi_R(\mathbf{H}_\mathcal{M})}{\mathbf{H}_\mathcal{M}}\cdot  (1 + \mathcal{E}_{\mathcal{M}} ) \right) ,
\end{align*}
where
\begin{align*}
\mathbf{H}_{\mathcal{M}}(x,x',\xi,\eta,\zeta) & = H_{\mathcal{M}} \left( \frac{x+x'}{2}, \xi,\eta,\zeta \right), \\[0.2cm]
\mathcal{E}_{\mathcal{M}}(x,x',\eta,\zeta) & = H_{\mathcal{M}}\left( \frac{x+x'}{2},\xi,\eta,\zeta \right) - H_{\mathcal{M}}(x',\xi,\eta,\zeta). 
\end{align*}
Using Taylor's theorem, we can write the term $\mathcal{E}_{\mathcal{M}}$ as:
\begin{align*}
 \mathcal{E}_{\mathcal{M}}(x,x',\eta,\zeta) & = \mathcal{P}_{\mathcal{M}}\big( w(x',\eta,\zeta) , (x-x') \sigma(x',\zeta), (x-x')^2 \zeta \big),
 \end{align*}
 for a certain polynomial $\mathcal{P}_{\mathcal{M}} \in \R[w,\sigma,\zeta]$. Next, by Lemma \ref{l:control_L^2_norm}, we have:
 $$
 \left \Vert  \Op^{M}_h \left( \frac{1 - \chi_R(\mathbf{H}_\mathcal{M})}{\mathbf{H}_\mathcal{M}} \right) \breve{\chi}\left( \frac{hD_z}{R'} \right) \psi_h \right \Vert_{L^2(M)} = \mathcal{O}\left( \frac{1}{R} \right) + \mathcal{O}\left( \frac{1}{R'} \right).
 $$
Moreover, defining:
\begin{align*}
\mathcal{R}_h(x',\eta,\zeta) & := \mathcal{P}_{\mathcal{M}}\big(w(x',\eta,\zeta), h\sigma(x',\zeta), h^2\zeta \big) \\[0.2cm]
\mathbf{b}_{h,R}(x,x',\xi,\eta,\zeta) & := \mathcal{P}_{\mathcal{M}}(1, -D_\xi, D_\xi^2) \frac{1 - \chi_R(\mathbf{H}_{\mathcal{M}})}{\mathbf{H}_{\mathcal{M}}},
\end{align*}
 and using integration by parts, we get, for any $\varphi \in L^2(M)$,
\begin{align*}
\Big \langle \Op^{M}_h \left( \frac{1 - \chi_R(\mathbf{H}_\mathcal{M})}{\mathbf{H}_\mathcal{M}}\cdot   \mathcal{E}_{\mathcal{M}} \right)\breve{\chi}\left( \frac{hD_z}{R'} \right) \psi_h, \varphi \Big \rangle_{L^2(M)} &  \\[0.2cm]
 & \hspace*{-5cm} = \Big \langle \Op^{M}_h \big( \mathbf{b}_{h,R} \cdot \mathcal{R}_h \big) \breve{\chi}\left( \frac{hD_z}{R'} \right) \psi_h, \varphi \Big \rangle_{L^2(M)}.
\end{align*}
Finallly, using estimates \eqref{e:R-S_1}, \eqref{e:R-S_2} and \eqref{e:R-S_3}, and Lemma  \ref{l:Calderon_Vaillancourt}, we get:
\begin{align*}
\left \Vert  \Op^{M}_h \left( \frac{1 - \chi_R(\mathbf{H}_\mathcal{M})}{\mathbf{H}_\mathcal{M}}\cdot   \mathcal{E}_{\mathcal{M}} \right) \breve{\chi}\left( \frac{hD_z}{R'} \right) \psi_h \right \Vert_{L^2(M)} = \mathcal{O}\left( \frac{1}{R} \right) + \mathcal{O}\left( \frac{1}{R'} \right),
\end{align*}
then the claim holds true. 

To conclude, we observe that properties \eqref{e:h_oscillation_2} and \eqref{e:h_oscillation_3} follow in similar way, by means of semiclassical pseudodifferential calculus, Lemma \ref{l:control_L^2_norm}, and estimates \eqref{e:R-S_2} and \eqref{e:R-S_3}.
\end{proof}

\subsection{Second microlocalization}

We now proceed to show Theorem \ref{t:existence}, showing the existence of two-microlocal semiclassical measures adapted to the sub-elliptic regimes issued from $h^2 \Delta_{\mathcal{M}}$. Let us fix parameters $R, \epsilon, \delta, \rho  > 0$, and split the symbol $\mathbf{a}_{h,R}$ given by \eqref{e:R_symbol} into the sum of four terms $\mathbf{a}_{h,R} = \mathbf{a}_{h,R}^{\epsilon} +  \mathbf{a}_{h,R,\epsilon}^{\delta,\rho} + \mathbf{a}_{h,R,\epsilon,\delta}^\rho + \mathbf{a}_{h,R,\epsilon,\rho}$, with:
\begin{align*}
\mathbf{a}_{h,R}^\epsilon & := \breve{ \chi}\left( \frac{h^2\zeta}{\epsilon} \right) \mathbf{a}_{h,R}, \\[0.2cm]
\mathbf{a}_{h,R,\epsilon}^{\delta,\rho} & := \breve{\chi} \left( \frac{\eta }{\rho}  \right)  \breve{\chi}\left( \frac{h\sigma}{\delta} \right)   \chi \left( \frac{h^2 \zeta}{\epsilon}  \right)   \, \mathbf{a}_{h,R}, \\[0.2cm]
\mathbf{a}_{h,R,\epsilon,\rho} & := \chi \left( \frac{\eta }{\rho}  \right) \chi \left(\frac{h^2 \zeta}{\epsilon} \right) \mathbf{a}_{h,R}, \\[0.2cm]
\mathbf{a}_{h,R,\epsilon,\delta}^\rho & :=\breve{\chi} \left( \frac{\eta }{\rho}  \right) \chi\left( \frac{h\sigma}{\delta} \right) \chi \left( \frac{h^2 \zeta}{\epsilon}  \right) \mathbf{a}_{h,R}.
\end{align*}
Here, $\sigma = \sigma(x,\zeta) = 2x \zeta$. Let us define the distributions:
\begin{align*}
I_{h,R,\epsilon}^1(a) & := \big \langle \Op^{\w}_h( \mathbf{a}_{h,R}^\epsilon) \psi_h, \psi_h \big \rangle_{L^2(M)}, \\[0.2cm] 
I_{h,R,\epsilon,\delta,\rho}^2(a) & := \big \langle \Op^{\w}_h( \mathbf{a}_{h,R,\epsilon}^{\delta,\rho}) \psi_h, \psi_h \big \rangle_{L^2(M)}, \\[0.2cm]
I_{h,R,\epsilon,\rho}^3(a) & := \big \langle \Op_h^{\w}( \mathbf{a}_{h,R,\epsilon,\rho}) \psi_h, \psi_h \big \rangle_{L^2(M)}, \\[0.2cm]
I_{h,R,\epsilon,\delta,\rho}^4(a) & := \big \langle \Op_h^{\w}( \mathbf{a}_{h,R,\epsilon,\delta}^\rho) \psi_h, \psi_h \big \rangle_{L^2(M)}.
\end{align*}
We will show that each of these distributions produce, in the semiclassical limit, the measures $M_1$, $M_2$, $m_3^\jmath$, and $m_4$. To facilitate the reading, we give the existence of each of these measures in a separate proposition.

\begin{prop}
\label{l:measure_m_1}
There exists a positive Radon measure 
$$
M_1 \in \mathcal{M}_+(\mathbb{T}_y \times \T_z  \times \R_\eta \times \R^*_\zeta;\mathcal{L}^1(L^2(\R_x)))$$ 
such that, modulo the extraction of suitable subsequences,
\begin{align*}
 \lim_{R \to + \infty} \lim_{\epsilon \to 0} \lim_{h \to 0} I_{h,R,\epsilon}^1(a) =  \int_{\mathbb{T}_y \times \T_z  \times \R_\eta \times \R^*_\zeta} \operatorname{Tr}_{L^2(\R_x)} \Big(  \mathcal{A}(y,z,\eta,\zeta) d M_1(y,z,\eta,\zeta) \Big),
\end{align*}
where\footnote{If not mentioned explicitely, we work with the Weyl quantization.}
\begin{align}
\label{e:A_1}
\mathcal{A}(y,z,\eta,\zeta) & = \operatorname{Op}^\R_{(x,\xi)} \big( a\left(0, y,z, w(x,\eta,\zeta), \xi,\sigma(x,\zeta),\zeta \right) \big),
\end{align}
and the functions $w(x,\eta,\zeta)$ and $\sigma(x,\zeta)$ are given by \eqref{e:w_and_sigma}.
\end{prop}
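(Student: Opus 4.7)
The plan is to obtain $M_1$ as the weak-$\star$ limit of an operator-valued lift of $I^{1}_{h,R,\epsilon}$, realized via a dilation of the $x$-variable at scale $h$. On the support of $\mathbf{a}^{\epsilon}_{h,R}$ the cutoff $\breve\chi(h^2\zeta/\epsilon)$ forces $|\zeta|\gtrsim \epsilon h^{-2}$, and combined with the compactness of $\operatorname{supp} a$ in the $h\sigma$-slot (so $|hx\zeta|\lesssim 1$), it also forces $|x|\lesssim h/\epsilon$. This suggests introducing the normalized rescaled sequence $\Psi_h(\tilde x,y,z):=h^{1/2}\,\psi_h(h\tilde x,y,z)\in L^2(\R_{\tilde x}\times\T^2_{y,z})$ together with the renormalization $\tilde\zeta:=h^2\zeta$. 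Under the linear symplectic dilation $(x,\xi)\mapsto(h\tilde x,h^{-1}\xi)$ (with $(\eta,\zeta)\in\Z^2$ unscaled since they are dual to torus variables), the value of the symbol is
\[
\mathbf{a}^{\epsilon}_{h,R}\bigl(h\tilde x,y,z,h^{-1}\xi,\eta,\zeta\bigr)
=\breve\chi\!\bigl(\tfrac{\zeta}{R}\bigr)\,\breve\chi\!\bigl(\tfrac{\tilde\zeta}{\epsilon}\bigr)\,a\bigl(h\tilde x,y,z,\eta+\tilde x^{2}\tilde\zeta,\xi,2\tilde x\tilde\zeta,\tilde\zeta\bigr),
\]
which converges pointwise as $h\to 0$ to $a\bigl(0,y,z,w(\tilde x,\eta,\tilde\zeta),\xi,\sigma(\tilde x,\tilde\zeta),\tilde\zeta\bigr)$, precisely the Weyl symbol of the operator $\mathcal A(y,z,\eta,\tilde\zeta)$ defined in \eqref{e:A_1}.

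Next I would use the exact Egorov identity for this dilation in $(x,\xi)$, together with the Fourier series decomposition in $(y,z)\in\T^2$, to rewrite
\[
I^{1}_{h,R,\epsilon}(a)=\bigl\langle \mathcal B^{a}_{h,R,\epsilon}\,\Psi_h,\Psi_h\bigr\rangle_{L^2(\R_{\tilde x}\times\T^2_{y,z})}+r_{h,R,\epsilon}(a),
\]
where $\mathcal B^{a}_{h,R,\epsilon}$ is the semiclassical pseudodifferential operator on $\T^2_{y,z}$ whose operator-valued Weyl symbol is obtained by applying $\Op^{\R}_{(\tilde x,\xi)}$ to the rescaled function displayed above. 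The remainder $r_{h,R,\epsilon}(a)$ gathers the quantization errors produced by the $\tfrac{x+x'}{2}$-structure of the Weyl quantization together with the non-trivial $h\tilde x$-dependence of $a$; after a Taylor expansion in that argument they are of the form $\Op_h^{\w}$ of symbols carrying an extra factor of $h\tilde x$, and hence are $o(1)$ in $L^2$-operator norm by Lemma \ref{l:Calderon_Vaillancourt} and the uniform bound of Lemma \ref{l:control_L^2_norm}, each additional $\tilde x$ being compensated by the $\tilde x$-localization enforced by the compact support of $a$ in its $\sigma$-slot. A direct inspection then shows that $(y,z,\eta,\tilde\zeta)\mapsto\mathcal B^{a}_{h,R,\epsilon}(y,z,\eta,\tilde\zeta)$ takes values in trace-class operators on $L^2(\R_{\tilde x})$, with $\mathcal L^1$-norm controlled by a finite seminorm of $a$ uniformly in $h$.

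To conclude, I would use a diagonal extraction over a countable dense subfamily of symbols $a\in\mathcal S_{\mathcal M}$, together with the uniform bound $|I^{1}_{h,R,\epsilon}(a)|\lesssim_{R,\epsilon}1$, to produce a limit functional which is positive on $a\ge 0$ by an operator-valued sharp G\aa{}rding inequality applied to $\mathcal B^{a}_{h,R,\epsilon}$ (whose $O(h)$ error vanishes in the limit). The limit depends on $a$ only through $\mathcal A(y,z,\eta,\tilde\zeta)$, and an operator-valued Riesz representation theorem on the locally compact base $\T^2_{y,z}\times\R_\eta\times\R^*_{\tilde\zeta}$, with values in trace-class operators on $L^2(\R_{\tilde x})$, then delivers the desired measure $M_1$; taking $\epsilon\to 0$ and $R\to+\infty$ extends $M_1$ monotonically. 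The main technical obstacle lies in the interplay between the discrete Fourier variable $\zeta\in\Z$ of $\psi_h$ and the continuous limiting variable $\tilde\zeta\in\R^*$: since $\tilde\zeta=h^{2}\zeta$ ranges in the lattice $h^{2}\Z$ whose weighted counting measure $h^{2}\#(\cdot\cap h^{2}\Z)$ converges to Lebesgue measure on $\R$, one must verify that no non-trivial mass persists at any single integer value of $\zeta$, which is ensured by the slow variation of the rescaled symbol in $\tilde\zeta$ together with the cutoff $\breve\chi(\tilde\zeta/\epsilon)$ keeping $|\tilde\zeta|\gtrsim\epsilon$ away from zero.
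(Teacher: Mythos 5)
Your proof shares the paper's essential architecture (dilation $x\mapsto hx$ to resolve the singular set, operator-valued trace form, Calderón-Vaillancourt bound, density plus diagonal extraction and Riesz representation, positivity via a square-root/Gårding argument), but it diverges in one concrete choice: the paper's map $\bm\kappa_h$ of \eqref{e:symplectic_change_of_coordinates} additionally rescales $(y,z)\mapsto(h^{1/2}y,h^{1/2}z)$, turning the residual torus quantization into a semiclassical Weyl calculus of parameter $h^{1/2}$ for which the weight structure $h^{\gamma(\alpha)}$ of the adapted Calderón-Vaillancourt Lemma \ref{l:Calderon_Vaillancourt} is designed, whereas you keep $(y,z)$ fixed and confront directly a symbol whose $\zeta$-dependence lives at a sub-semiclassical rate. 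Both routes deliver the same measure; the paper's balanced dilation tidies the remainder estimates and makes the transition from the discrete torus lattice to the continuous $\zeta$-line match the stated Calderón-Vaillancourt weights, while your version avoids the extra bookkeeping of the $h^{-1/2}\T^2$ target but then needs to justify separately that the torus-quantization errors are $o(1)$ for symbols varying on this fast scale (which Lemma \ref{l:Calderon_Vaillancourt} does in fact cover, since its statement is phrased in terms of unrescaled $L^\infty$ seminorms). Two small slips to fix in your write-up: (i) after the dilation the $\xi$-slot should carry $\xi$, not $h^{-1}\xi$, since that $h^{-1}$ cancels against the factor $h$ contributed by the semiclassical quantization — your right-hand side already reflects the cancellation, but the left-hand side does not; (ii) since $\tilde\zeta=h^2\zeta$ with the semiclassical $\zeta$ ranging over $h\Z$, the renormalized variable lives on $h^3\Z$, not $h^2\Z$, though the densification argument you invoke works for either scale.
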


\begin{proof}
We first consider the change of variables of \eqref{e:symplectic_change_of_coordinates} and \eqref{e:unitary_transformation}. In terms of the push-forward measure $
\text{d}_h(X',\Xi,X) := \big( \bm{\kappa}_h \big)_* \text{d}(X',\Xi,X)$, 
we can rewrite the distribution $I_{h,R,\epsilon}^1(a)$  as:
\begin{align*}
I_{h,R,\epsilon}^1(a)  &   =   \int_{\R^9} \mathbf{a}_{h,R}^\epsilon \left( \frac{X+X'}{2},h\Xi,\right)  \psi_h(X') \overline{\psi}_h(X) e^{i \Xi \cdot( X - X')}\text{d}(X',\Xi,X)  \notag \\[0.2cm]
   & = \int_{\R^9}  \mathbf{A}_{h,R}^{\epsilon}\left( \frac{X+X'}{2},h\Xi \right) \Psi_h(X') \overline{\Psi}_h(X) e^{i \Xi \cdot (X - X')}\text{d}_h(X',\Xi,X),
 \end{align*}
 where $ \mathbf{A}_{h,R}^{\epsilon} := \mathbf{a}_{h,R}^{\epsilon} \circ \bm{\kappa}_h$. Defining next the operator-valued symbol
 $$
 \mathcal{A}_{h,R}^{\epsilon}(Y,\Theta) :=\Op_{(x,\xi)}^{\R} \big( \mathbf{A}_{h,R}^{\epsilon} ( \cdot_x, Y, \cdot_{h\xi}, h \Theta ) \big),
 $$
 where we denote $Y = (y,z)$ and $\Theta = (\eta,\zeta)$, we can also write the distribution $I_{h,R,\epsilon}^1(a)$ in trace form as
 \begin{align}
 \label{e:effective_integral}
 I_{h,R,\epsilon}^1(a) = \int_{\R^6} \operatorname{Tr}_{L^2(\R_x)} \left[    \mathcal{A}_{h,R}^{\epsilon}\left( \frac{Y+Y'}{2},\Theta\right) \mathcal{K}_h(Y',Y) \right] e^{i \Theta \cdot (Y - Y')}  \text{d}_h(Y',\Theta,Y),
 \end{align}
 where $\mathcal{K}_h(Y',Y)$ is the trace class operator, defined on $L^2(\R_x)$, by the tensor-product integral kernel
 $$
 \Psi_h \otimes \Psi_h(x',x,Y',Y) := \Psi_h(x',Y') \overline{\Psi}_h(x,Y) .
 $$
Notice moreover that the localization of the cut-off
$$
 \breve{\chi}\left( \frac{h^{2+1/2} \zeta}{\epsilon} \right),
$$ 
together with the condition that $a$ has compact support in the variable $h\sigma$, give that 
\begin{equation}
\label{e:localization_in_x}
 \vert x  \vert \lesssim_{a} \epsilon^{-1}
\end{equation}
on the support of integration of \eqref{e:effective_integral}. Moreover, for $R > 0$ and $\epsilon > 0$ fixed, and $h > 0$ sufficiently small, the effect of the cut-off $\breve{\chi}(h^{1/2} \zeta/R)$ is empty. 
Then, by the semiclassical pseudodifferential calculus, we have:
\begin{align*}
 I_{h,R,\epsilon}^1(a) & =  \int_{\R^6} \operatorname{Tr}_{L^2(\R_x)} \left[    \mathcal{A}\left( \frac{Y_h + Y'_h}{2},\Theta_h\right) \mathcal{T}_{\epsilon}(\Theta_h)    \mathcal{K}_h(Y',Y)  \right]e^{i \Theta \cdot (Y - Y')}  \text{d}_h(Y',\Theta,Y) + \mathcal{O}_\epsilon(h),
\end{align*}
where 
$$
Y_h := h^{1/2} Y, \quad Y'_h := h^{1/2} Y', \quad  \Theta_h = (h^{1/2} \eta, h^{2+1/2}\zeta),
$$ 
the symbol $\mathcal{A}$ is given by \eqref{e:A_1}, and
%
\begin{equation}
\label{e:cut_off_1_operator}
\mathcal{T}_{\epsilon}(\Theta_h) =    \breve{\chi} \left( \frac{h^{2+1/2}\zeta}{\epsilon} \right).
\end{equation}
This motivates the introduction of another distribution acting on $ \mathcal{A}\mathcal{T}_\epsilon$ by:
\begin{equation}
\label{e:distribution_operator_1}
\mathcal{I}_{h}^1(  \mathcal{A}\mathcal{T}_\epsilon) :=  \int_{\R^6} \operatorname{Tr}_{L^2(\R_x)} \Big[    \mathcal{A}(Y_h,\Theta_h) \mathcal{T}_{\epsilon}(\Theta_h)\mathcal{K}_h(Y,Y')   \Big]  e^{i \Theta \cdot (Y - Y')}  \text{d}_h(Y',\Theta,Y).
\end{equation}
By application of the Calderón-Vaillancourt theorem (precisely, we use Lemma \ref{l:Calderon_Vaillancourt}) we obtain:
 \begin{align*}
 \big \vert \mathcal{I}_{h}^1( \mathcal{A} \mathcal{T}_\epsilon) \big \vert &  \lesssim  \Vert  \mathcal{A}(\cdot_Y,\cdot_\Theta) \mathcal{T}_\epsilon(\cdot_\Theta) \Vert_{L^\infty(  \T^2_{Y} \times \R^2_\Theta; \mathcal{L}(L^2(\R_x)))}  \Vert \psi_h \Vert_{L^2(M)}^2 + \mathcal{O}_{\epsilon}(h^{1/2}) \\[0.2cm]
 & = \Vert \mathcal{A}(\cdot_Y,\cdot_\Theta) \Vert_{L^\infty(  \T^2_{Y} \times \R^2_\Theta; \mathcal{L}(L^2(\R_x)))}  \Vert \psi_h \Vert_{L^2(M)}^2 + \mathcal{O}_{\epsilon}(h^{1/2}).
 \end{align*}
Since we can replace the operator $ \mathcal{A}\mathcal{T}_\epsilon$ by any other element of the space of operator-valued symbols $\mathcal{C}_c^\infty( \T^2_Y \times \R^2_\Theta; \mathcal{K}(L^2(\R_x)))$, which is dense in the space  $\mathcal{C}_c( \T^2_Y \times \R^2_\Theta; \mathcal{K}(L^2(\R_x)))$ we get, modulo extraction of a sub-sequence, the existence of a (complex-valued) Radon measure $M_1 \in \mathcal{M}( \mathbb{T}^2_{(y,z)} \times \R_\xi \times \R_\zeta ; \mathcal{L}^1(L^2(\R_x)))$ such that
 $$
 \lim_{h \to 0}\mathcal{I}_{h}^1( \mathcal{A}\mathcal{T}_\epsilon)  =   \int_{\T^2_{y,z} \times \R_\eta \times \R_\zeta}  \operatorname{Tr} \Big(   \mathcal{A}(y,z,\eta,\zeta)\mathcal{T}_\epsilon(\zeta) d M_1 \Big).
 $$ 
 Finally, taking the limit $\epsilon \to 0$, we get the claim.

It remains to show that the measure $M_1$ is positive. To this aim, we proceed as in \cite[Lemma 1.2]{Gerard91}. Assume that
 $$
  \mathcal{A}(Y,\Theta) \geq 0,
 $$ 
 as a symbol taking values in $\mathcal{L}(L^2(\R_x))$. Let us take $\varrho > 0$. Then there exists an operator-valued symbol $\mathcal{B}_{\varrho} \in \mathcal{C}^\infty(\T^2_Y \times \R^2_\Theta; \mathcal{L}(L^2(\R_x)))$ such that
 $$
 \big( \mathcal{B}_{\varrho}(Y,\Theta) \big)^*  \mathcal{B}_{\varrho}(Y,\Theta) = \varrho + \mathcal{A}(Y,\Theta).
 $$
Thus, by the semiclassical pseudodifferential calculus,
 \begin{align*}
 \mathcal{I}_{h}^1( ( \varrho + \mathcal{A})\mathcal{T}_\epsilon) & \\[0.2cm]
  & \hspace*{-2cm} =   \int_{\R^6}  \operatorname{Tr} \left[ \left( \varrho +  \mathcal{A}_\iota \left( \frac{Y_h + Y'_h}{2}, \Theta_h \right)  \right)  \mathcal{K}_h(Y,Y') \right]  \mathcal{T}_{\epsilon}(\Theta_h)e^{i \Theta \cdot (Y-Y')} \text{d}_h(Y',\Theta,Y) \\[0.2cm]
   &\hspace*{-2cm} =  \int_{\R^6} \Big \langle   \mathcal{B}_{\varrho}(Y_h',\Theta_h) \bm{\psi}_h(Y',\Theta), \mathcal{B}_{\varrho}(Y_h,\Theta_h) \bm{\psi}_h(Y,\Theta) \Big \rangle_{L^2(\R_x)}  \mathcal{T}_{\epsilon}(\Theta_h) \text{d}_h(Y',\Theta,Y) \\[0.2cm]
   & \hspace*{-2cm} \quad + \mathcal{O}_{\epsilon}(h^{1/2}),
 \end{align*}
 where
 $$
 \bm{\psi}_h(x,Y,\Theta) := \Psi_h(x,Y,\Theta) \, e^{i \Theta \cdot Y}.
 $$
 Finally, taking limits we conclude that
 \begin{align*}
\lim_{\varrho \to 0}  \lim_{\epsilon \to 0} \lim_{h \to 0} \mathcal{I}_{h}^1(( \varrho + \mathcal{A})\mathcal{T}_\epsilon)  & =  \operatorname{Tr} \int_{\T^2_{y,z} \times \R_\eta \times \R_\zeta}   \mathcal{A}(y,z,\eta,\zeta) dM_1(y,z,\eta,\zeta)  \\[0.2cm]
  &  \geq 0.
 \end{align*}
\end{proof}

\begin{prop}
\label{l:measure_m_2}
There exists a positive Radon measure 
$$
M_2 \in \mathcal{M}_+(M\setminus \mathscr{S} \times \mathbb{T}^2_{y,z} \times \R^*_\sigma; \mathcal{L}^1(L^2(\R_w)))
$$
such that, modulo the extraction of suitable subsequences,
\begin{align*}
\lim_{\rho \to + \infty} \lim_{R \to + \infty} \lim_{\delta \to 0}  \lim_{\epsilon \to 0} \lim_{h \to 0} I_{h,R,\epsilon,\delta,\rho}^2(a) & \\[0.2cm]
 & \hspace*{-3cm} = \operatorname{Tr} \int_{M\setminus \mathscr{S} \times \R^*_\sigma} \Op_{(w,\upsilon)}^\R \Big( a \Big( x,y,z, w, \sigma \upsilon, \sigma, 0 \Big) \Big) dM_2.
 \end{align*}
\end{prop}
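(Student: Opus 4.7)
The plan is to follow the same overall scheme as the proof of Proposition \ref{l:measure_m_1}, but with a rescaling tailored to the new regime: $h^2\zeta$ is subcritical (pushed to zero by $\chi(h^2\zeta/\epsilon)$ as $\epsilon\to 0$), $h\sigma$ is supercritical (kept bounded away from zero by $\breve{\chi}(h\sigma/\delta)$), and $\eta$ is not small (thanks to $\breve{\chi}(\eta/\rho)$). The effective operator will now live on $L^2(\R_w)$, matching the harmonic oscillator $\widehat{G}_\sigma$ of \eqref{e:effective_quantum_hamiltonian_2}, rather than on $L^2(\R_x)$ as in the regime of $M_1$ where the quartic oscillator \eqref{e:effective_quantum_hamiltonian_1} governs the dynamics.

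First I would introduce a change of variables adapted to these scales. Since we are localized in the region $x \ne 0$ (away from $\mathscr{S}$), this change trades the pair $(y,\eta)$ for $(y,w)$ through $w = \eta + x^2\zeta$ (treating $w$ as a new dual variable) and rescales $\xi = \sigma \upsilon$ with $\upsilon$ of order one. Composed with an $L^2$-unitary rescaling of $\psi_h$ analogous to \eqref{e:unitary_transformation}, this will bring $I^2_{h,R,\epsilon,\delta,\rho}(a)$ into the trace form
\begin{equation*}
\int \operatorname{Tr}_{L^2(\R_w)} \Big[ \mathcal{A}_{h,R,\epsilon,\delta,\rho}(x,y,z,\sigma) \, \mathcal{K}_h(x,y,z,\sigma) \Big] \, \text{d}_h,
\end{equation*}
where the operator-valued symbol
\begin{equation*}
\mathcal{A}_{h,R,\epsilon,\delta,\rho}(x,y,z,\sigma) = \Op^{\R}_{(w,\upsilon)}\big( a(x,y,z,w,\sigma\upsilon,h\sigma,h^2\zeta) \big) \cdot (\text{cut-offs})
\end{equation*}
is to be quantized in the remaining variables $(x,y,z,\sigma)$, and $\mathcal{K}_h$ is built from the rescaled rank-one operator $\Psi_h \otimes \overline{\Psi}_h$ restricted to the relevant fibers.

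Second, I would apply the operator-valued Calderón-Vaillancourt theorem (Lemma \ref{l:Calderon_Vaillancourt}) combined with the uniform $L^2$-bound of Lemma \ref{l:control_L^2_norm} to obtain a uniform estimate of this trace when tested against operator-valued symbols in $\mathcal{C}_c^\infty$ with values in $\mathcal{K}(L^2(\R_w))$. By density and a standard weak-$*$ compactness argument in $\mathcal{M}(M\setminus\mathscr{S}\times\T^2_{y,z}\times\R^*_\sigma;\mathcal{L}^1(L^2(\R_w)))$, modulo extraction of subsequences in the successive limits $h\to 0$, $\epsilon\to 0$, $\delta\to 0$, $R\to+\infty$, $\rho\to+\infty$, the distribution will converge to the action of a Radon measure $M_2$. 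The support conditions $x \ne 0$ and $\sigma \ne 0$ will be inherited from $\breve{\chi}(h\sigma/\delta)$ and $\chi(h^2\zeta/\epsilon)$ after the $\delta, \epsilon \to 0$ limits. Positivity of $M_2$ should then follow from the same Gårding-type factorization argument as at the end of the proof of Proposition \ref{l:measure_m_1}: for a non-negative operator-valued symbol $\mathcal{A}\geq 0$, factor $\mathcal{A} + \varrho = \mathcal{B}_\varrho^* \mathcal{B}_\varrho$, use the operator-valued symbolic calculus to recognize the leading contribution as a non-negative quadratic form modulo $o(1)$, and let $\varrho \to 0^+$.

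The main obstacle will be identifying the correct unitary change of variables and then controlling all of the error terms produced by the nonlinear relation $\sigma = 2x\zeta$ and by the action of the cut-offs in both the $(x,\xi)$ and $(y,\eta)$ slots. I expect these errors to be absorbed using the Rothschild-Stein estimates \eqref{e:R-S_1}--\eqref{e:R-S_3}, exactly as in the proof of Lemma \ref{l:control_L^2_norm}, the cascade of limits being chosen precisely so that each error term is swept away at its designated stage ($h\to 0$ for symbolic remainders, $\epsilon\to 0$ to freeze $h^2\zeta = 0$ in the argument of $a$, $\delta\to 0$ to promote $h\sigma$ to a macroscopic variable $\sigma$, and finally $R,\rho \to +\infty$).
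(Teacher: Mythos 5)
Your overall skeleton is right and matches the paper: rescale, change fiber variables so that the operator-valued symbol lives on $L^2(\R_w)$, write the Wigner distribution in trace form, extract a trace-class-valued measure by weak-$*$ compactness, and prove positivity by factorizing a non-negative operator symbol as $\mathcal{B}_\varrho^*\mathcal{B}_\varrho$. However, the central technical step in your plan --- ``apply the operator-valued Calder\'on--Vaillancourt theorem (Lemma \ref{l:Calderon_Vaillancourt}) combined with Lemma \ref{l:control_L^2_norm} after the change of variables'' --- is precisely where the paper identifies an obstruction that your proposal does not address. After the change $\vartheta$ to coordinates $(W',\mathfrak{S},W)$, the map $(x,\xi)\mapsto(w,\upsilon)$ does preserve the pseudodifferential structure, but the map $(z,\zeta)\mapsto(z,\bm{\sigma})$ does not; the paper states explicitly that the trace form \eqref{e:first_trace_form_equation} cannot be estimated by $L^\infty$ norms of the derivatives of $a$ through Lemma \ref{l:Calderon_Vaillancourt} applied to the transformed symbol. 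So step~2 of your plan, as written, fails.

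The missing idea is to perform Hwang-type integrations by parts in the original $(X,\Xi,X')$ coordinates, \emph{before} changing variables, using the operators $\mathfrak{D}_Y$, $\mathfrak{D}_\Theta$, $\mathfrak{D}_{Y'}$ and the weights $\Gamma_h$, $\Gamma$ to produce auxiliary functions $\mathbf{H}_1$, $\mathbf{H}_2$ whose $L^2$ norms are controlled by $\Vert\psi_h\Vert_{L^2}$. Only afterwards does one apply $\vartheta$, and then the bound on the transformed trace form \eqref{e:final_Trace_formula_2} comes from the $L^2$-type estimate of Lemma \ref{l:Hwang}, not from Lemma \ref{l:Calderon_Vaillancourt}. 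Two secondary inaccuracies: the change of variables trades the fiber pair $(x,\xi)$ for $(w,\upsilon)$ --- with $w$ playing the \emph{position} role in $\Op^\R_{(w,\upsilon)}$ --- rather than trading $(y,\eta)$ for $(y,w)$ as you wrote; and the paper does not work with $\sigma=2x\zeta$ directly but with the deformed $\bm{\sigma}=\sgn((x+x')\zeta)\sqrt{-4h\eta\zeta}$, which agrees with $\sigma$ up to $\mathcal{O}(\epsilon/\delta^2)$ but has $\vert\bm{\sigma}\vert$ independent of $x$, a property used so that the change of variables is well-behaved at fixed $\epsilon$ before the ordered limits are taken.
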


\begin{proof}
The main difficulty in this proof is the change of variables aiming to coordinates 
$$
(w',y',z',\upsilon,\eta,\sigma,w,y,z)
$$ 
from the original ones $(X,\Xi,X')$. We will see that the change $(x,\xi) \mapsto (w,\upsilon)$ is asymptotically symplectic (as $\epsilon \to 0$ and $\delta \to 0$ in this order) hence it preserves the pseudodifferential structure. 

We first consider the unitary change of variables given by \eqref{e:symplectic_change_of_coordinates} and \eqref{e:unitary_transformation}. Hence we can rewrite the distribution $I_{h,R,\epsilon,\delta,\rho}^2$ as:
\begin{align}
I_{h,R,\epsilon,\delta,\rho}^2(a)  &   = \int_{\R^9} \mathbf{a}_{h,R,\epsilon}^{\delta,\rho} \left( \frac{X+X'}{2},h\Xi\right)  \psi_h(X') \overline{\psi}_h(X) e^{i \Xi \cdot( X - X')}\text{d}(X',\Xi,X)  \notag \\[0.2cm]
\label{e:integral_after_change}
   & = \int_{\R^9}  \mathbf{A}_{h,R,\epsilon}^{\delta,\rho}\left(\frac{X+X'}{2},h\Xi,\right) \Psi_h(X') \overline{\Psi}_h(X) e^{i \Xi \cdot (X - X')}\text{d}_h(X',\Xi,X),
 \end{align}
 where $\Psi_h$ is given by \eqref{e:unitary_transformation} and $ \mathbf{A}_{h,R,\epsilon}^{\delta,\rho} := \mathbf{a}_{h,R,\epsilon}^{\delta,\rho} \circ \bm{\kappa}_h$. 
Recall that the integral \eqref{e:integral_after_change} is localized by the cut-off 
\begin{align}
\label{e:cut_off_estimate_1}
 \bm{\chi}_{h,R,\epsilon,\delta,\rho}(x+x',\xi,\eta,\zeta) & \\[0.2cm]
  & \hspace*{-2cm}  := \breve{\chi} \left( \frac{h^{1/2}\eta}{\rho} \right)    \breve{\chi} \left( \frac{  h^{2 + 1/2} (x+x') \zeta }{\delta} \right) \chi \left( \frac{h^{2 + 1/2} \zeta}{\epsilon} \right)\breve{\chi} \left(  \frac{h^{1/2} \zeta}{R} \right). \notag
\end{align}
Using next that $a$ has compact support in the $\sigma$ variable, and \eqref{e:cut_off_estimate_1}, we have the following localization of the variable $x$ in the support of integration of \eqref{e:integral_after_change}:
\begin{equation}
\label{e:estimate_on_x}
\frac{\delta}{\epsilon} \leq \frac{ \vert x + x'  \vert}{2}.
\end{equation} 
Moreover, using that $a$ has compact support in the variable $w$, we also have, in the support of integration of \eqref{e:integral_after_change}:
\begin{align}
\label{e:support_a_implication}
- h^{1/2}  \vert \eta \vert + h^{2+1/2} \frac{(x+x')^2}{4} \vert \zeta  \vert \leq \left \vert h^{1/2} \eta  + h^{2+1/2} \frac{(x+x')^2}{4} \zeta \right \vert \lesssim_a 1.
\end{align}
This, together with \eqref{e:estimate_on_x} and the fact that $a$ has compact support in the variable $\sigma$ imply that
\begin{equation}
\label{e:rang_eta_1}
\frac{\delta^2}{\epsilon} \lesssim h^{1/2} \vert \eta  \vert.
\end{equation}
This means that, for $\delta >0$ fixed and $\epsilon > 0$ sufficiently small, the effect of the cut-off 
$$
\breve{\chi} \left( \frac{h^{1/2}\eta}{\rho} \right)  
$$ 
in \eqref{e:cut_off_estimate_1} is empty. To simplify notations, we drop the index $\rho$ in the rest of this proof. We can now make the change of coordinates previously mentioned. Nevertheless, we will slightly deform the variable $\sigma$ to improve their commutation properties. This is reminiscent to the normal form procedure introduced in \cite[\S 5.2]{Ar_Riv24}, and which we will return to in Section \ref{s:normal_form}. Precisely, we define:
\begin{equation}
\label{e:change_theta}
\vartheta \, : \, ( X',\Xi,X) \longmapsto (W',\mathfrak{S},W) = (\mathbf{w}',y',z',\upsilon,\eta,\bm{\sigma},\mathbf{w},y,z)
\end{equation}
by the following formulas: 
\begin{equation}
\label{e:tricky_change}
\upsilon := \frac{\xi}{ h \bm{\sigma} }, \quad \bm{\sigma} := \sgn \big( (x+x')\zeta \big) \sqrt{- 4h \eta \zeta  }, \quad \left \lbrace \begin{array}{l} 
 \mathbf{w} :=  \displaystyle  2h^{1/2} \eta +  h\bm{\sigma}x, \\[0.4cm]
 \mathbf{w}' := \displaystyle 2 h^{1/2}\eta + h \bm{\sigma}x'.
 \end{array} \right.
\end{equation}
This change of variables is well defined when restricted to the support of integration of \eqref{e:integral_after_change}. Set now (with a slightly abuse of notation)
\begin{align*} 
\sigma & = \sigma\left( \frac{h(x+x')}{2},h^{1/2}\zeta \right)   = h^{1+1/2} (x+x') \zeta, \\[0.2cm]
w & = h^{1/2}\eta + \frac{h \sigma x}{2}, \quad w'  = h^{1/2} \eta + \frac{h \sigma x'}{2}.
\end{align*}
As we will see, $\bm{\sigma}$ approximates $\sigma$ as $\epsilon \to 0$, and its absolute value is independent of $x$, which has some adventages regarding commuting properties. Our aim is next to write $\bm{\sigma}$ in terms of $(\sigma,\mathbf{w},\eta)$. To do this, we first notice that
\begin{align}
\label{e:hx_in_terms_of_w_eta}
\frac{hx}{2} & =  \frac{ w - h^{1/2} \eta}{\sigma},\\[0.2cm]
\label{e:hx_prime_in_terms_of_w_eta}
 \frac{hx'}{2} & =  \frac{w' - h^{1/2}\eta}{\sigma}, \\[0.2cm]
\label{e:zeta_in_terms_of_w_eta}
h^{1/2} \zeta & = \frac{\sigma^2}{2(w +w' - 2h^{1/2}\eta)}.
\end{align}
On the other hand, from \eqref{e:hx_in_terms_of_w_eta}, \eqref{e:hx_prime_in_terms_of_w_eta}, and \eqref{e:zeta_in_terms_of_w_eta}, the fact that $a$ has compact support in the variable $\mathbf{w}$, and \eqref{e:rang_eta_1}, we also observe that:
$$
\sgn((x+x')\zeta) = \sgn(\sigma), \quad \sgn(\zeta) = - \sgn(\eta).
$$
Thus we get:
\begin{align}
\label{e:sigma_sigma}
\bm{\sigma} & =    \frac{ \sigma}{\sqrt{1 -  \displaystyle \frac{ w + w'}{ 2h^{1/2} \eta}  }} = \frac{\sigma}{\displaystyle 1 - \frac{ \mathbf{w} + \mathbf{w}'}{4 h^{1/2}\eta}}.
\end{align}
%
By using \eqref{e:rang_eta_1}, the localization of the cut-off \eqref{e:cut_off_estimate_1} and the compact-support properties of $a$, this implies that:
\begin{align}
\label{e:sigma_and_sigma_x}
\bm{\sigma} & = \sigma  \left( 1 + \mathcal{O}\left( \frac{\epsilon}{\delta^2} \right) \right).
\end{align}
This together with \eqref{e:cut_off_estimate_1} gives that $\delta \leq h \vert \bm{\sigma} \vert( 1  + \mathcal{O}(\epsilon \delta^{-2})) \lesssim_a 1$ on the support of integration of \eqref{e:integral_after_change}, and we emphasize that we will take the limit $\epsilon \to 0$ before the limit $\delta \to 0$.   By \eqref{e:tricky_change}, we can also write $\zeta$ in terms of $\eta$ and $\bm{\sigma}$ by
\begin{align}
\label{e:zeta_sigma}
h^{1/2} \zeta & = -\frac{\bm{\sigma}^2}{4h^{1/2} \eta}.
\end{align}
Moreover, by \eqref{e:tricky_change}, we have:
$$
x - x' = \frac{\mathbf{w}-\mathbf{w}'}{h\bm{\sigma}}.
$$
Finally, we can write 
\begin{align}
\label{e:w_to_w}
\frac{w + w'}{2} = \frac{ \mathbf{w} + \mathbf{w}'}{2} - \frac{ (\mathbf{w} + \mathbf{w}')^2}{16h^{1/2} \eta}. 
\end{align}
Therefore we obtain, on the support of integration of the integral \eqref{e:integral_after_change}, the change-of-variables formula:
\begin{align*}
\text{d}_h(X',\Xi,X) & = \frac{1}{2h^2 \vert \eta \vert}  \, \text{d}_h(W',\mathfrak{S},W),
\end{align*}
where the new measure of integration in the variables $(W,\mathfrak{S},W')$ is given by 
$$
\text{d}_h(W',\mathfrak{S},W) = \vartheta_* \, \text{d}_h(X,\Xi,X').
$$ 

We turn at this point to writing the distribution $I^2_{h,R,\epsilon,\delta}(a)$ in terms of the variables $(W',\mathfrak{S},W)$. Denoting $Y = (y,z)$ and in view of \eqref{e:zeta_sigma}, we consider the function: 
\begin{align*}
\bm{\psi}_h(\mathbf{w},Y,\eta,\bm{\sigma}) & :=  \frac{1}{h \sqrt{2 \vert \eta \vert }} \Psi_h \circ \vartheta^{-1} \left( \mathbf{w},Y,\eta,\bm{\sigma} \right)  \exp \left( iY \cdot \left( \eta, - \frac{ \bm{\sigma}^2}{4h \eta} \right) \right).
\end{align*}
Introducing the operator-valued symbol
\begin{align}
\label{e:operator_from_symbol}
\mathcal{A}_{h,R,\epsilon}^\delta(Y,\eta,\bm{\sigma}) :=  \Op_{(\mathbf{w},\upsilon)}^{\R}( \mathbf{A}_{h,R,\epsilon}^\delta \circ \vartheta^{-1} ),
\end{align}
we get the formal expression:
\begin{align}
\label{e:first_trace_form_equation}
 I^2_{h,R,\epsilon,\delta}(a) &\\[0.2cm] 
& \hspace*{-1.3cm} = \int_{\R^4_{(Y ,Y')} \times \R_\eta \times \R_{\bm{\sigma}}} \operatorname{Tr}_{L^2(\R_{\mathbf{w}})} \left[  \mathcal{A}_{h,R,\epsilon}^\delta \left( \frac{Y+Y'}{2},\eta,\bm{\sigma} \right) \, \mathcal{K}_h (Y,Y',\eta,\bm{\sigma}) \right] \text{d}_h(Y,Y',\eta,\bm{\sigma})  \notag ,
\end{align}
where the operator  $\mathcal{K}_h  \in \mathcal{C}^\infty(\R^4_{(Y,Y')}  \times \R_\eta \times \R_\sigma; \mathcal{L}^1(L^2(\R_{\mathbf{w}})))$ is the trace-class (rank-one) operator-valued symbol with integral kernel given by the tensor product:
$$
\bm{\psi}_h \otimes \overline{\bm{\psi}}_h(\mathbf{w},\mathbf{w}',Y,Y',\bm{\sigma},\eta) := \bm{\psi}_h(\mathbf{w}',Y',\eta,\bm{\sigma}) \overline{\bm{\psi}}_h(\mathbf{w},Y,\eta,\bm{\sigma}).
$$
Trace formula \eqref{e:first_trace_form_equation} takes into account the change of variables $\vartheta$ and couples the operator $\mathcal{A}_{h,R,\epsilon}^\delta$ with the trace-class operator $\mathcal{K}_h$. This suggests that the measure $M_2$ will be obtained as an accumulation point of the sequence $(\mathcal{K}_h)_{h> 0}$ for the weak topology in the space of trace-class operators. However, it is not yet clear how to bound \eqref{e:first_trace_form_equation} in terms of the $L^\infty$ norm of the derivatives of $a$. The difficulty comes because although the pseudodifferential structure has been conserved by the change of coordinates $\vartheta$ with respect to the map from variables $(x,\xi)$ to variables $(\mathbf{w},\upsilon)$, this is not the case with respect to the change $(z,\zeta)$ into $(z,\bm{\sigma})$. 

To overcome this difficulty, we come back to \eqref{e:integral_after_change} and perform some integrations by parts, before changing variables, in order to gain decayment in the variables $(Y,\eta,\bm{\sigma},Y')$, similarly to the strategy of proof of the Calderón-Vaillancourt theorem (see Lemma \ref{l:Calderon_Vaillancourt} and \cite{Hwang87}). Let us denote $Y = (y,z)$, $Y' = (y',z')$, $\Theta = (\eta,\zeta)$ and set:
 $$
 \begin{array}{ll}
 \Gamma_h\left(Y \right)  \displaystyle :=  \frac{1}{1 + iy_h} \cdot \frac{1}{1 + iz_h},  & \hspace*{0.2cm} \mathfrak{D}_\Theta  := (1 + \partial_\zeta^h)(1 + \partial_\eta^h),  \\[0.4cm]
 \hspace*{0.25cm} \Gamma(\Theta)  \displaystyle := \frac{1}{1 + i\eta} \cdot \frac{1}{1 + i\zeta}, & \hspace*{0.2cm} \mathfrak{D}_Y  := (1 + \partial_y)(1 + \partial_z),
 \end{array}
$$
where the discrete derivatives $\partial_\xi^h$ and $\partial_\eta^h$ are introduced in Definition \ref{d:discret_derivatives} and the terms $y_h$, $z_h$ are given by \eqref{e:h_points}. We next use the following identities:
\begin{align*}
e^{i \Theta \cdot Y} & = \Gamma_h(Y) \, \mathfrak{D}_\Theta \, e^{i \Theta \cdot Y},  \\[0.2cm]
e^{i \Theta \cdot Y} & = \Gamma(\Theta)\, \mathfrak{D}_Y \,  e^{i \Theta \cdot Y },
\end{align*}
and define, from Definition \ref{d:discret_derivatives}, the adjoint operators
\begin{align*}
\mathfrak{D}^*_\Theta :=  (1 - \overline{\partial}^h_\eta)(1 - \overline{\partial}^h_\zeta), \quad 
\mathfrak{D}^*_Y := (1 - \partial_y)(1 - \partial_z).
\end{align*}
The following functions will appear in the integration-by-parts process:
 \begin{align*}
  \mathbf{B}_{h,R,\epsilon}^\delta(X,\Xi,X')  & := \mathbf{A}_{h,R,\epsilon}^\delta\left( \frac{ X + X'}{2} ,h\Xi \right)  \Gamma^2_h\left(  Y - Y' \right), \\[0.2cm]
  \mathbf{H}_1(x',Y',\Theta) & := e^{-i Y' \cdot \Theta} \int_{\R^2} e^{i Y' \cdot \Lambda} \Gamma(\Lambda - \Theta) \mathcal{F}_{Y'}\Psi_h(x',\Lambda) \text{d}(\Lambda), \\[0.2cm]
\mathbf{H}_2(x,Y,\Theta) & := e^{i Y \cdot \Theta} \int_{\R^2} e^{i Y \cdot \Lambda} \Gamma(\Theta + \Lambda) \mathcal{F}_Y\overline{\Psi}_h(x,\Lambda) \text{d}(\Lambda).
 \end{align*}
Then, by successive integrations by parts in \eqref{e:integral_after_change}, first in the $\Theta$ variable and next in the $Y$ and $Y'$ varibles, we obtain:
 \begin{align*}
I_{h,R,\epsilon,\delta}^2(a) & \\[0.2cm]
 & \hspace*{-1.2cm} =  \int_{\R^9} e^{i \xi (x-x')} \mathfrak{D}^*_Y (\mathfrak{D}^*_\Theta)^2 \mathfrak{D}^*_{Y'} \mathbf{B}_{h,R,\epsilon}^\delta(X,\Xi,X')  \, \mathbf{H}_1(X',\Theta) \mathbf{H}_2(X,\Theta) \text{d}_h(X,\Xi,X').
 \end{align*}
Thus, by changing variables we obtain
\begin{align}
I_{h,R,\epsilon,\delta}^2(a) & \notag \\[0.2cm]
\label{e:distributional_form}
 & \hspace*{-1.2cm} = \int_{\R^9}  \Big(    \mathfrak{D}^*_Y (\mathfrak{D}^*_\Theta)^2 \mathfrak{D}^*_{Y'} \mathbf{B}_{h,R,\epsilon}^\delta  \, \mathbf{H}_1 \, \mathbf{H}_2 \Big) \circ \vartheta^{-1}(W,\mathfrak{S},W') e^{i \upsilon (\mathbf{w} - \mathbf{w}')}   \frac{ \text{d}_h(W',\mathfrak{S},W)}{2h^2 \vert \eta \vert}.
 \end{align}
\medskip

The next step is to rewrite \eqref{e:distributional_form} again in trace form. To this aim, let us define
\begin{align*}
\mathcal{H}_1(\mathbf{w}',Y',\eta,\bm{\sigma}) := \frac{\mathbf{H}_1 \circ \vartheta^{-1}( \mathbf{w}', Y', \eta, \bm{\sigma})}{ h \sqrt{2\vert \eta \vert}}, \quad 
\mathcal{H}_2(\mathbf{w},Y,\eta,\bm{\sigma}) :=  \frac{\mathbf{H}_2 \circ \vartheta^{-1}( \mathbf{w}, Y, \eta, \bm{\sigma})}{h \sqrt{2\vert \eta \vert}},
\end{align*}
and, for each $(Y, Y', \eta,\bm{\sigma})$, let  $\mathcal{K}^h_{\mathcal{H}_1 \otimes \mathcal{H}_2}(Y,Y',\eta,\bm{\sigma})$  be the trace-class (rank-one) operator with integral kernel given by
$$
 \mathcal{H}_2(\mathbf{w},Y,\eta,\bm{\sigma}) \mathcal{H}_1(\mathbf{w}',Y',\eta,\bm{\sigma}) \mathfrak{D}_Y^* \mathfrak{D}_{Y'}^* \Gamma^2_h(Y- Y').
$$ 
Then we can once again rewrite the distribution $I^2_{h,R,\epsilon,\delta}(a)$ in trace form as
\begin{align*}
I^2_{h,R,\epsilon,\delta}(a) & \\[0.2cm]
&  \hspace*{-1.5cm} = \int_{\R^2_Y \times \R_\eta \times \R_{\bm{\sigma}}} \operatorname{Tr}_{L^2(\R_{\mathbf{w}})} \Big[ \mathcal{A}_{h,R,\epsilon}^\delta\left( \frac{Y + Y'}{2} ,\eta,\bm{\sigma} \right) \, \mathcal{K}^h_{ \mathcal{H}_1 \otimes \mathcal{H}_2}(Y,Y',\eta,\bm{\sigma}) \big) \Big] \text{d}_h(Y,Y',\eta,\bm{\sigma}) +  \mathcal{E}^1_{h,R,\epsilon,\delta},
\end{align*}
where  the remainder term $\mathcal{E}^1_{h,R,\epsilon,\delta}$ comes from a sum of derivatives of the cut-off \eqref{e:cut_off_estimate_1} and the symbol $a$ produced by the differential operator $\mathfrak{D}^*_Y (\mathfrak{D}^*_\Theta)^2\mathfrak{D}^*_{Y'}$ acting on the symbol $\mathbf{A}_{h,R,\epsilon}^\delta$. Precisely, one has the estimate
 $$
 \mathcal{E}^1_{h,R,\epsilon,\delta} = \mathcal{O}_{\delta}(\epsilon) + \mathcal{O}\left( \frac{h^{1/2}}{R} \right)  + \mathcal{O}\left( \frac{h^{1+1/2}}{\delta} \right) + \mathcal{O}\left( \frac{h^{2 + 1/2}}{\epsilon} \right).
 $$
Notice moreover that, by  \eqref{e:cut_off_estimate_1}, \eqref{e:hx_in_terms_of_w_eta}, \eqref{e:sigma_sigma}, and \eqref{e:zeta_sigma}, and defining $\alpha(\eta,\bm{\sigma}) = \eta/\bm{\sigma}$, we have:
\begin{align*}
\mathbf{A}_{h,R,\epsilon}^\delta  \circ \vartheta^{-1}(\mathbf{w},Y,\eta,\bm{\sigma},\upsilon) & \\[0.2cm]
 & \hspace*{-3.5cm} =   \bm{\chi}_{h,R,\epsilon,\delta} \circ \vartheta^{-1} (\mathbf{w},\eta,\bm{\sigma}) a \left(\frac{ \mathbf{w} -h^{1/2} \eta }{ \bm{\sigma} } , h^{1/2}Y, w,  \frac{h \bm{\sigma}   \upsilon}{2}, h \sigma,- \frac{(h\bm{\sigma})^2}{4h^{1/2}\eta} \right)  \\[0.2cm]
 & \hspace*{-3.5cm}  =   \bm{\chi}_{h,R,\epsilon,\delta} \circ \vartheta^{-1} (0,\eta,\bm{\sigma}) a \left( \frac{ -h^{1+1/2} \eta}{ h\bm{\sigma} }, h^{1/2}Y, \mathbf{w}, \frac{h \bm{\sigma} \upsilon}{2},h\bm{\sigma},0 \right)  + \mathbf{E}^2_{h,R,\epsilon,\delta}(W,\mathfrak{S}),
\end{align*}
where $\sigma = \sigma(\mathbf{w}, \eta, \bm{\sigma})$ is given by \eqref{e:sigma_sigma} and $w = w(\mathbf{w},\eta)$ is given by \eqref{e:w_to_w}. The remainder term  $\mathbf{E}^2_{h,R,\epsilon,\delta}$, in view of \eqref{e:operator_from_symbol}, must be estimated in operator norm as:
$$
\mathcal{E}^2_{h,R,\epsilon,\delta} := \big \Vert \Op_{(\mathbf{w},\upsilon)}^\R \big( \mathbf{E}^2_{h,R,\epsilon,\delta} \big) \big \Vert_{L^\infty(Y,\eta,\bm{\sigma}; \mathcal{L}(L^2(\R_\mathbf{w})))}  = \mathcal{O}_{R,\epsilon,\delta}(h)+ \mathcal{O}\left( \frac{\epsilon}{\delta^2} \right).
$$

We next pass from the distribution $I^2_{h,R,\epsilon,\delta}$ to a distribution acting on the space of compact operators on $L^2(\R_{\mathbf{w}})$. Let us define, for $\bm{\sigma} \neq 0$,
\begin{align}
\label{e:effective_symbol}
\mathcal{A}_\infty(Y,\eta,\bm{\sigma}) &  :=  \Op_{(\mathbf{w},\upsilon)}
^\R \left(a \left( -\frac{\eta}{\bm{\sigma}}, Y, \mathbf{w}, \bm{\sigma} \upsilon,\bm{\sigma},0 \right) \right), \\[0.2cm]
\mathcal{T}_{\delta}( \bm{\sigma}) & :=  \breve{\chi} \left(\frac{ \bm{\sigma}}{\delta} \right) , \\[0.2cm]
\label{e:cut_offs_epsilon_R}
\bm{\chi}_{h,R,\epsilon}(\eta,\bm{\sigma}) &:= \chi \left(  \frac{ h^2 \bm{\sigma}^2}{4h^{1/2}\eta \epsilon} \right) \breve{\chi} \left( \frac{\bm{\sigma}^2}{4h^{1/2} \eta R} \right).
\end{align}
This motivates the introduction of the distribution:
\begin{align}
\label{e:final_Trace_formula_2}
\mathcal{I}^2_{h,R,\epsilon}( \mathcal{A}_\infty \mathcal{T}_{\delta}) & \\[0.2cm]
 & \hspace*{-2cm}  :=    \int_{\R^2_Y \times \R_\eta \times \R_{\bm{\sigma}}} \operatorname{Tr}_{L^2(\R_{\mathbf{w}})} \Big( \mathcal{A}_{\infty}(h^{1/2}Y,h^{1+1/2}\eta,h\bm{\sigma}) \mathcal{T}_{\delta}(h\bm{\sigma}) \, \mathcal{K}^h_{ \mathcal{H}_1 \otimes \mathcal{H}_2}(Y,Y',\eta,\bm{\sigma}) \Big)  \bm{\chi}_{h,R,\epsilon}(\eta,\bm{\sigma})    \text{d}_h. \notag
\end{align}
We remark that $I^2_{h,R,\epsilon,\delta}(a) = \mathcal{I}^2_{h,R,\epsilon}(\mathcal{A}_\infty \mathcal{T}_\delta) + \mathcal{E}^3_{h,R,\epsilon,\delta}$
with 
\begin{equation}
\label{e:Error_3}
 \mathcal{E}^3_{h,R,\epsilon,\delta} := \left( 1 +  \mathcal{E}^2_{h,R,\epsilon,\delta}  \right) \mathcal{E}^1_{h,R,\epsilon,\delta},
\end{equation}
and we also emphasize that, for $h > 0$ sufficiently small, the effect of the cut-off on $R$ in \eqref{e:cut_offs_epsilon_R} is empty. By using Lemma \ref{l:Hwang}, we obtain the uniform estimate
\begin{align*}
\big \vert  \mathcal{I}^2_{h,R,\epsilon}(\mathcal{A}_\infty \mathcal{T}_{\delta}) \big \vert &\\[0.2cm]
 & \hspace*{-1.5cm} \lesssim \Vert \mathcal{A}_\infty \mathcal{T}_{\delta} \Vert_{L^\infty(\R^2_Y \times \R^*_\eta \times \R^*_{\bm{\sigma}};   \mathcal{L}(L^2(\R_{\mathbf{w}})))} \Vert \mathcal{H}_1 \Vert_{L^2(\R_{\mathbf{w}}\times \R^2_Y \times \R^*_\eta \times \R_{\bm{\sigma}})} \Vert \mathcal{H}_2 \Vert_{L^2(\R_{\mathbf{w}}\times \R^2_Y \times \R^*_\eta \times \R_{\bm{\sigma}})} \\[0.2cm]
 & \hspace*{-1.5cm} \lesssim \Vert \mathcal{A}_\infty \Vert_{L^\infty(\R^2_Y \times \R^*_\eta \times \R^*_{\bm{\sigma}};   \mathcal{L}(L^2(\R_{\mathbf{w}})))} \Vert \mathbf{H}_1 \Vert_{L^2} \Vert \mathbf{H}_2 \Vert_{L^2} \\[0.2cm]
 & \hspace*{-1.5cm} \lesssim \Vert \mathcal{A}_\infty \Vert_{L^\infty(\R^2_Y \times \R^*_\eta \times \R^*_{\bm{\sigma}};   \mathcal{L}(L^2(\R_{\mathbf{w}})))} \Vert \psi_h \Vert_{L^2(M)}^2.
\end{align*}
 Therefore, since we can replace $\mathcal{A}_\infty \mathcal{T}_{\delta}$ by any other element of the space of operator-valued symbols $\mathcal{C}_c^\infty(\R^2_Y \times \R^*_\eta \times \R^*_{\bm{\sigma}};   \mathcal{K}(L^2(\R_{\mathbf{w}})))$, we get by taking limits through succesive subsequences, the existence of a Radon measure $\mathbf{m}_2 \in \mathcal{M}(\mathbb{T}^2_Y \times \R^*_\eta \times \R^*_{\bm{\sigma}})$ such that
\begin{align*}
 \lim_{\epsilon \to 0} \lim_{h \to 0} \mathcal{I}^2_{h,R,\epsilon}(\mathcal{A}_\infty \mathcal{T}_{\delta})  = \int_{\T^2_{Y} \times \R_\eta^* \times \R_{\bm{\sigma}}^*} \operatorname{Tr} \Big[\mathcal{A}_\infty(Y,\eta,\bm{\sigma}) \mathcal{T}_\delta(\bm{\sigma}) \, d\mathbf{m}_2(Y,\eta,\bm{\sigma})  \Big].
\end{align*}
Thus:
$$
\lim_{R \to + \infty} \lim_{\delta \to 0} \lim_{\epsilon \to 0} \lim_{h \to 0} I_{h,R,\epsilon,\delta}^2(a) = \int_{\T^2_{Y} \times \R_\eta^* \times \R_{\bm{\sigma}}^*} \operatorname{Tr} \Big[\mathcal{A}_\infty(Y,\eta,\bm{\sigma})  \, d\mathbf{m}_2(Y,\eta,\bm{\sigma})  \Big].
$$
Finally, in view of \eqref{e:hx_in_terms_of_w_eta}, we make the change of variables : 
$$
\mathbf{x} := \frac{-\eta}{\bm{\sigma}}, \quad \eta = -\bm{\sigma} \mathbf{x},
$$
and define:
$$
M_2(\mathbf{x},y,z,\bm{\sigma}) := \mathbf{m}_2\left(y,z,-\bm{\sigma} \mathbf{x}, \bm{\sigma} \right).
$$
Thus:
\begin{align*}
  \int_{\T^2_{y,z} \times \R_\eta^* \times \R_{\bm{\sigma}}^*} \operatorname{Tr} \Big( \mathcal{A}_\infty(Y,\eta,\bm{\sigma}) \, d\mathbf{m}_2(y,z,\eta,\bm{\sigma}) \Big) & \\[0.2cm]
  & \hspace*{-5cm} =  \int_{M_{\mathbf{x},y,z} \setminus \mathscr{S} \times \R_{\bm{\sigma}}^*}   \operatorname{Tr} \Big( \Op_{\mathbf{w},\upsilon}^\R\left( a\left(\mathbf{x},y,z,\mathbf{w}, \bm{\sigma} \upsilon, \bm{\sigma},0 \right) \right)  \, dM_2(\mathbf{x},y,z,\bm{\sigma}) \Big).
\end{align*}

It remains to show the positivity of $\mathbf{m}_2$, which gives immediately the positivity of $M_2$. Assume that the operator-valued symbol $\mathcal{A}_\infty \in \mathcal{C}^\infty( \R^2_Y \times \R_\eta^* \times \R_{\bm{\sigma}}^* ; \mathcal{L}(L^2(\R_{\mathbf{w}})))$ given by \eqref{e:effective_symbol} takes values, for each $(Y,\eta,\bm{\sigma}) \in \R^2_Y \times \R^*_\eta \times \R^*_{\bm{\sigma}}$ in the space of positive self-adjoint operators. Then, for each $\varrho > 0$, there exists another symbol $\mathcal{B}_{\varrho} \in \mathcal{C}^\infty( \R^2_Y \times \R_\eta^* \times \R_{\bm{\sigma}}^* ; \mathcal{L}(L^2(\R_{\mathbf{w}})))$ such that:
$$
 \mathcal{B}_{\varrho}(Y,\eta,\bm{\sigma})^*  \mathcal{B}_{\varrho}(Y,\eta,\bm{\sigma}) = \varrho + \mathcal{A}_\infty(Y,\eta,\bm{\sigma}).
$$
We define:
$$
\bm{\psi}_h^{\varrho}(\mathbf{w},Y,\eta,\bm{\sigma}) := \mathcal{B}_{\varrho}(h^{1/2}Y,h^{1+1/2}\eta,h\bm{\sigma})  \bm{\psi}_h(\mathbf{w},Y,\eta,\bm{\sigma}).
$$
Then, in view of \eqref{e:first_trace_form_equation}, and using the semiclassical symbolic calculus to replace $Y$ by $Y'$ in the operator-valued symbol $\mathcal{B}_{\varrho}(Y,\eta,\bm{\sigma})^*$, we obtain:
\begin{align*}
\mathcal{I}_{h,R,\epsilon}^2( (\varrho + \mathcal{A}_\infty) \mathcal{T}_{\delta}) & \\[0.2cm]
 & \hspace*{-2.9cm} = \int_{\R^4_{(Y,Y')} \times \R_\eta \times \R_{\bm{\sigma}}} \operatorname{Tr} \big[ ( \varrho + \mathcal{A}_\infty)(h^{1/2} Y,h^{1+1/2} \eta,h\bm{\sigma}) \mathcal{T}_{\delta}(h\bm{\sigma})  \mathcal{K}_h(Y,Y',\eta,\bm{\sigma}) \big] \bm{\chi}_{h,R,\epsilon}(\eta,\bm{\sigma}) \text{d}_h \\[0.2cm]
 & \hspace*{-2.9cm} =  \int_{\R_{\mathbf{w}} \times \R^4_{(Y,Y')} \times \R_\eta \times \R_{\bm{\sigma}}} \mathcal{T}_{\delta}(h\bm{\sigma}) \bm{\psi}_h^{\varrho}(\mathbf{w},Y',\eta,\bm{\sigma}) \overline{ \bm{\psi}_h^{\varrho}(\mathbf{w},Y,\eta,\bm{\sigma})} \bm{\chi}_{h,R,\epsilon}(\eta,\bm{\sigma}) \text{d}_h +\mathcal{E}^3_{h,R,\epsilon,\delta} \\[0.2cm]
 & \hspace*{-2.9cm} = \int_{\R_{\mathbf{w}} \times \R^*_\eta \times \R^*_{\bm{\sigma}}} \mathcal{T}_{\delta}(h\bm{\sigma}) \left \vert \int_{\R_Y}  \bm{\psi}_h^{\varrho}(\mathbf{w},Y,\eta,\bm{\sigma}) \text{d}_h(Y) \right \vert^2 \bm{\chi}_{h,R,\epsilon}(\eta,\bm{\sigma}) \text{d}_h(\mathbf{w},\eta,\bm{\sigma}) +  \mathcal{E}^3_{h,R,\epsilon,\delta},
\end{align*}
where
$$
\mathcal{E}^3_{h,R,\epsilon,\delta} = \mathcal{O}\left( \frac{\epsilon}{\delta^2} \right)  + \mathcal{O}_{R,\epsilon,\delta,\varrho}(h^{1/2}). 
$$
Finally, taking limits we conclude that
$$
 \lim_{\varrho \to +\infty} \lim_{R \to + \infty} \lim_{\delta \to 0} \lim_{\epsilon \to 0} \lim_{h \to 0} \mathcal{I}_{h,R,\epsilon}^2((\varrho + \mathcal{A}_\infty)\mathcal{T}_{\delta}) =  \int_{\T^2_{(y,z)} \times \overline{\R}_\eta^* \times \R_{\bm{\sigma}}^*}  \operatorname{Tr} \Big(  \mathcal{A}_\infty \, d\mathbf{m}_2(y,z,\bm{\sigma},\eta) \Big)  \geq 0.
$$
This concludes the proof.

\end{proof}

\begin{prop}
\label{l:measure_m_4}
For each $\jmath \in \{0,1\}$, there exists a positive Radon measure 
$$
m_3^{\jmath} \in \mathcal{M}_+(\T_y \times \T_z \times   \R_\xi \times \R_\eta \times \R_\varsigma)
$$ 
such that, modulo the extraction of succesive subsequences:
\begin{align*}
\lim_{\rho \to + \infty}  \lim_{R \to + \infty}  \lim_{\epsilon \to 0} \lim_{h \to 0} I_{h,R,\epsilon,\rho}^3(a) & \\[0.2cm]
 & \hspace*{-3.5cm}  = \sum_{\jmath \in \{0,1\}} \int_{\T^2_{y,z} \times  \R_\xi  \times \R_\eta \times \R_\sigma \times \R_\varsigma} a(0,y,z,\eta + (-1)^{ \jmath} \varsigma^2, \xi,\sigma,0) dm_3^{\jmath}.
 \end{align*}
 
\end{prop}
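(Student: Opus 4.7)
The plan is to follow the template of Propositions \ref{l:measure_m_1} and \ref{l:measure_m_2}, adapted to the sub-critical regime for the $\mathcal{X}_4$ direction where the oscillation is too weak to support a non-trivial operator-valued limit in $L^2(\R_x)$, so that only a scalar measure survives. I will begin by applying the rescaling $\bm{\kappa}_h$ together with the unitary $\Psi_h$ from \eqref{e:symplectic_change_of_coordinates}--\eqref{e:unitary_transformation}, rewriting $I^3_{h,R,\epsilon,\rho}(a)$ as a Wigner-type integral for $\Psi_h$ in rescaled coordinates. On the support of integration, the cutoffs $\chi(\eta/\rho)$, $\chi(h^2\zeta/\epsilon)$, $\breve{\chi}(\zeta/R)$ combined with the compact support of $a$ in its last four slots yield the key a priori bounds $|x|\lesssim |\zeta|^{-1/2}\lesssim R^{-1/2}$, $|h\sigma|=2|hx\zeta|\lesssim h|\zeta|^{1/2}\lesssim \sqrt{\epsilon}$, and $|h^2\zeta|\le \epsilon$. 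Consequently, in the iterated limit $h\to 0$, $\epsilon\to 0$, $R\to +\infty$, the first, sixth, and seventh slots of $a$ will all concentrate at $0$, as required by the form of the limit integrand.

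Next, since $\breve{\chi}(\zeta/R)$ excludes a neighborhood of $\zeta=0$, I will split $I^3_{h,R,\epsilon,\rho}(a)=\sum_{\jmath\in\{0,1\}}I^{3,\jmath}_{h,R,\epsilon,\rho}(a)$ according to $\sgn(\zeta)=(-1)^\jmath$, and on each piece introduce the symplectic change of variables $\varsigma:=x|\zeta|^{1/2}$, $\upsilon:=\xi|\zeta|^{-1/2}$ on the $(x,\xi)$-fibers. Under this change, $w=\eta+x^2\zeta=\eta+(-1)^\jmath\varsigma^2$, exactly the expression appearing in the fourth slot of the limit integrand. The fifth slot of $a$ becomes $|\zeta|^{1/2}\upsilon$; together with the compact support of $a$ in that slot, this forces $|\upsilon|\lesssim |\zeta|^{-1/2}\to 0$, so the free dual variable in the limit will be the original $\xi=|\zeta|^{1/2}\upsilon$, which is the variable appearing in the statement.

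The main analytic step will be to bound $I^{3,\jmath}_{h,R,\epsilon,\rho}(a)$ uniformly in $h,R,\epsilon,\rho$. Following the blueprint of Proposition \ref{l:measure_m_2}, I will perform successive integrations by parts in $(y,z,\eta,\zeta)$ using the discrete derivatives of Definition \ref{d:discret_derivatives} to gain enough decay to apply the scalar Calder\'on-Vaillancourt bound (Lemma \ref{l:Calderon_Vaillancourt}), yielding
\[
|I^{3,\jmath}_{h,R,\epsilon,\rho}(a)|\lesssim \sum_{|\alpha|\le N}\|\partial^\alpha a\|_{L^\infty}\,\|\psi_h\|_{L^2(M)}^2+\mathcal{O}_{R,\epsilon}(h^{1/2})+\mathcal{O}_R(\sqrt{\epsilon}).
\]
The main obstacle I anticipate is ensuring that the change of variables of the previous paragraph commutes with the Weyl quantization on $M$ up to errors that vanish in the ordered limit $h\to 0$, $\epsilon\to 0$; this is the scalar analogue of the normal-form manipulations \eqref{e:sigma_sigma}--\eqref{e:w_to_w} performed in Proposition \ref{l:measure_m_2}, but simpler because no operator-valued structure has to be tracked.

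Finally, from the uniform bound above and weak-$\star$ sequential compactness in the dual of $\mathcal{C}_c$, I will extract successive subsequences along which
\[
I^{3,\jmath}_{h,R,\epsilon,\rho}(a)\longrightarrow \int_{\T^2_{y,z}\times \R_\xi\times \R_\eta\times \R_\sigma\times \R_\varsigma}a(0,y,z,\eta+(-1)^\jmath\varsigma^2,\xi,\sigma,0)\,dm_3^\jmath
\]
in the iterated limit, for a complex Radon measure $m_3^\jmath$ concentrated at $\sigma=0$; the explicit form of the integrand follows directly from the concentration properties of the first paragraph. Positivity of $m_3^\jmath$ will then be obtained by the sharp G\aa rding factorization used in Propositions \ref{l:measure_m_1} and \ref{l:measure_m_2}: given $a\ge 0$ and $\varrho>0$, I factor $a+\varrho=|b_\varrho|^2$, observe
$\langle \Op_h^{\w}(\mathbf{a}^\jmath_{h,R,\epsilon,\rho}+\varrho)\psi_h,\psi_h\rangle = \|\Op_h^{\w}(\mathbf{b}^\jmath_{\varrho,h,R,\epsilon,\rho})\psi_h\|_{L^2}^2+o(1)\ge 0$,
and let $\varrho\to 0^+$.
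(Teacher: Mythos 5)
Your overall architecture---rescale, localize by the cutoffs, use Calder\'on--Vaillancourt and weak-$\star$ compactness, then the G\aa rding factorization for positivity---does match the paper's proof of Proposition~\ref{l:measure_m_4}. But there are two genuine departures, and one of them is a gap.

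First, you rescale with $\bm{\kappa}_h$ from \eqref{e:symplectic_change_of_coordinates}--\eqref{e:unitary_transformation}, whereas the paper uses the anisotropic $\delta$-dependent rescaling $\bm{\kappa}_{h,\delta}$ from \eqref{e:symplectic_change_of_coordinates_delta_iota}--\eqref{e:unitary_transformation_delta_iota}. The extra parameter $\delta$ is what balances the $\xi$- and $\varsigma$-scales so that the error $\mathcal{E}_{h,R,\epsilon,\rho}$ in \eqref{e:error_for_m_3} decouples into $\mathcal{O}(\epsilon/\delta^{1/2})+\mathcal{O}(\delta^{1/2})$, which can be sent to zero in the proper order. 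Also, a small arithmetic slip: on the support you get $\vert x\vert\lesssim (\rho/\vert\zeta\vert)^{1/2}\lesssim (\rho/R)^{1/2}$ and $\vert h\sigma\vert\lesssim (\rho\epsilon)^{1/2}$, not $\vert x\vert\lesssim R^{-1/2}$ and $\vert h\sigma\vert\lesssim\sqrt{\epsilon}$; this is harmless because $R\to+\infty$ precedes $\rho\to+\infty$ in the iterated limit, but you should track the $\rho$-dependence.

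Second, and more importantly, your plan hinges on a fiber-wise symplectic change $(x,\xi)\mapsto(\varsigma,\upsilon)=(x\vert\zeta\vert^{1/2},\xi\vert\zeta\vert^{-1/2})$, and you anticipate as your \emph{main obstacle} that this must be reconciled with the Weyl quantization ``via a scalar analogue of the normal-form manipulations \eqref{e:sigma_sigma}--\eqref{e:w_to_w}.'' This is where the proposal misreads what is actually needed. The paper's proof of $m_3^\jmath$ performs no symplectic change on the $(x,\xi)$-fiber at all: $\vartheta_{h,\delta}$ in \eqref{e:vartheta_h_delta} is not a change of variables but merely a relabeling of arguments so that the rescaled symbol $\mathbf{a}_{h,R,\epsilon,\rho}\circ\bm{\kappa}_{h,\delta}$ is recognized as $\mathbf{a}_\jmath\circ\vartheta_{h,\delta}+\mathcal{O}(\epsilon)+\mathcal{O}((\rho/R)^{1/2})$ directly on the support, with $\mathbf{a}_\jmath$ defined in \eqref{e:a_jmath}. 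After this algebraic observation, Lemma~\ref{l:Calderon_Vaillancourt} is applied as is. There is no cohomological or normal-form step here, and no $\zeta$-dependent metaplectic to control. Your self-imposed change is exactly what creates the problem you then defer. Worse, this deferral is not a mere stylistic choice: the $\vartheta$-change in Proposition~\ref{l:measure_m_2} is used to \emph{retain} the operator-valued structure, so invoking a ``scalar analogue'' of it does not by itself explain why the operator on $L^2(\R_\varsigma)$ should collapse to a scalar measure. The collapse happens because the slot-$4$ and slot-$5$ arguments of $a$ (namely $h^{1/2}\eta+(-1)^\jmath(h^{5/4}\delta^{-1/2}\varsigma)^2$ and $\delta^{1/2}\xi$) have a vanishing effective Poisson bracket in the iterated limit, which is what the paper's direct Calder\'on--Vaillancourt bound exploits without introducing a fiber change. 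You also note that $\upsilon\to 0$ and then say ``the free dual variable will be the original $\xi$,'' which in effect abandons the change of variables the moment you introduce it; the cleaner observation is that $w=\eta+(-1)^\jmath\varsigma^2$ is a purely algebraic rewriting of the fourth slot of $a$, requiring no change of coordinates. I would drop the $(\varsigma,\upsilon)$ change entirely and follow the paper's more direct route.
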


\begin{proof}
Let $\delta > 0$. We consider in this proof the dilation-contraction symplectic change of variables 
$$
\bm{\kappa}_{h,\delta} :   T^*M   \rightarrow \R_x \times h^{-1/2} \mathbb{T}^2_{(y,z)} \times \R_\xi  \times h^{1/2} \mathbb{Z}^2_{(\eta,\zeta)} \times \R_{x'} \times h^{-1/2} \mathbb{T}^2_{(y',z')} $$
given by
\begin{align}
\label{e:symplectic_change_of_coordinates_delta_iota}
\bm{\kappa}_{h,\delta}(x,y,z,\xi,\eta,\zeta,x',y',z') & \\[0.2cm]
 & \hspace*{-2.8cm} = \left( \frac{h x}{\delta^{1/2}} ,h^{1/2}y, h^{1/2}z, \frac{\delta^{1/2}\xi}{h},h^{-1/2} \eta, h^{-1/2} \zeta, \frac{h x'}{\delta^{1/2}}, h^{1/2}y', h^{1/2} z' \right), \notag
\end{align}
and introduce the unitary transformation
\begin{equation}
\label{e:unitary_transformation_delta_iota}
\psi_h \mapsto \Psi_{h,\delta}(x,y,z) := \frac{h}{\delta^{1/4}} \psi_h\left( \frac{hx}{\delta^{1/2}},h^{1/2}y,h^{1/2}z \right).
\end{equation} 
This allows us to rewrite the Wigner distribution $I^3_{h,R,\epsilon,\rho}(a)$ as:
$$
I_{h,R,\epsilon,\rho}^3(a) =  \int_{\R^3 \times \R^3 \times \R^3} \mathbf{A}_{h,R,\epsilon,\delta,\rho} \left( \frac{X+X'}{2},h\Xi, \right) \Psi_{h,\delta}(X') \overline{\Psi}_{h,\delta}(X) e^{i \Xi \cdot(X - X' )} \text{d}_{h,\delta}(X',\Xi,X),
$$
where $\mathbf{A}_{h,R,\epsilon,\delta,\rho} = \mathbf{a}_{h,R,\epsilon,\delta,\rho} \circ \bm{\kappa}_{h,\delta}$ and $ \text{d}_{h,\delta} = (\bm{\kappa}_{h,\delta})_* \text{d}$. Let us define: 
\begin{align}
\label{e:change_to_varsigma}
x_{h,\delta} & := \frac{hx}{\delta^{1/2}}, \quad 
  \varsigma^2(x,\zeta)   := x^2 \vert \zeta \vert, \quad
\jmath (\zeta)  := \left \lbrace \begin{array}{ll}
0, & \text{if } \sgn(\zeta) = 1, \\[0.2cm]
1, & \text{if } \sgn(\zeta) = -1,
\end{array} \right.
\end{align}
and denote
\begin{align}
\label{e:cut_off_scalar_3}
\bm{\chi}_{h,R,\epsilon,\rho}(\eta,\zeta) := \chi \left( \frac{h^{1/2}\eta}{\rho} \right)  \chi\left( \frac{h^{2+ 1/2}\zeta}{\epsilon} \right) \breve{\chi}\left( \frac{h^{1/2} \zeta }{R}\right).
\end{align}
Notice that, the localization of the cut-off \eqref{e:cut_off_scalar_3} and the fact that  $a$ has compact support in the variable $w$ imply that:
$$
\frac{h \vert x  \vert}{\delta^{1/2}} \lesssim_a \left( \frac{\rho}{R} \right)^{1/2}.
$$
Let us next introduce the symbol:
\begin{align}
\label{e:a_jmath}
\mathbf{a}_{\jmath}(y,z,\xi,\eta,\sigma,\varsigma) & := a \left(0, y,z, \eta + (-1)^{ \jmath} \varsigma^2, \xi,\sigma,0 \right), \quad \jmath \in \{0,1\}.
\end{align}
Then, on the support of $\bm{\chi}_{h,R,\epsilon,\delta,\rho}$, we have:
\begin{align*}
 a\left( x_{h,\delta}, h^{1/2}y, h^{1/2}z,w(x_{h,\delta},h^{1/2}\eta,h^{1/2}\zeta),\delta^{1/2}\xi, h \sigma(x_{h,\delta}, h^{1/2}\zeta), h^{2+1/2}\zeta \right) & \\[0.2cm]
& \hspace*{-10cm} =  \mathbf{a}_{\jmath} \circ \vartheta_{h,\delta}(X,\Xi) +\mathcal{O}(\epsilon)  + \mathcal{O}\left( \left( \frac{\rho}{R} \right)^{1/2} \right),
\end{align*}
where
\begin{align}
\label{e:vartheta_h_delta}
\vartheta_{h,\delta}(X,\Xi) = \big( h^{1/2}y, h^{1/2}z, \delta^{1/2}\xi, h^{1/2}\eta, h^{1+1/4} \delta^{-1/2} \varsigma  \big).
\end{align}
Applying Lemma \ref{l:Calderon_Vaillancourt} and using the localization properties of the cut-off \eqref{e:cut_off_scalar_3}, we get:
\begin{align*}
\big \vert I_{h,R,\epsilon,\rho}^3(a) \big \vert & \lesssim  \sum_{\alpha \in \mathcal{N}} \big \Vert \partial^\alpha \mathbf{A}_{h,R,\epsilon,\rho} \big  \Vert_{L^\infty} \Vert \psi_h \Vert_{L^2(M)}^2 \\[0.2cm]
& \lesssim \sum_{\jmath \in \{0,1\}} \Vert \mathbf{a}_{\jmath}   \Vert_{L^\infty( \T^2_{y,z} \times \R_\xi \times \R_\eta \times \R_\sigma \times \R_\varsigma )} \Vert \psi_h \Vert_{L^2(M)}^2 + \mathcal{E}_{h,R,\epsilon,\rho},
\end{align*}
where the remainder term $\mathcal{E}_{h,R,\epsilon,\rho}$ term satisfies
\begin{equation}
\label{e:error_for_m_3}
\mathcal{E}_{h,R,\epsilon,\rho} =  \mathcal{O}_{R,\epsilon,\delta,\rho}(h^{1/2}) + \mathcal{O}\left( \frac{\epsilon}{\delta^{1/2}}\right)+  \mathcal{O}(\delta^{1/2}) +  \mathcal{O}\left( \left( \frac{\rho}{R} \right)^{1/2} \right).
\end{equation}
This motivates the introduction of a new distribution acting on test functions of the form 
$$
\mathbf{a} \in \mathcal{C}_c^\infty(\T^2_{y,z} \times \R_\xi \times \R_\eta \times \R_\sigma \times \R_\varsigma)
$$ 
by
\begin{align}
\label{e:effective_distribution_3}
\mathcal{I}_{h,R,\epsilon,\rho}^{3}(\mathbf{a}) & \\[0.2cm]
 & \hspace*{-1cm} := \int_{\R^3 \times \R^3 \times \R^3}  \bm{\chi}_{h,R,\epsilon,\rho}(\Xi) \, \mathbf{a} \circ \vartheta_h\left( \frac{X + X'}{2},\Xi \right) \Psi_h(X') \overline{\Psi}_h(X) e^{i \Xi \cdot(X - X' )} \text{d}_{h,\delta}(X',\Xi,X) . \notag
\end{align}
Then, modulo the extraction of succesive subsequences, we obtain, for each $\jmath \in \{0,1\}$, the existence of a Radon measure $m_3^{\jmath} \in \mathcal{M}_+(\T^2_{y,z} \times \R_\xi \times \R_\eta \times \R_{\varsigma})$ such that
\begin{align*}
\lim_{\rho \to + \infty} \lim_{R \to + \infty}  \lim_{\epsilon \to 0} \lim_{h \to 0} I^3_{h,R,\epsilon,\rho}(a) & = \lim_{\rho \to + \infty} \lim_{R \to + \infty}  \lim_{\epsilon \to 0} \lim_{h \to 0} \sum_{\jmath \in \{0,1 \}} \mathcal{I}_{h,R,\epsilon,\rho}^{3}(\mathbf{a}_{\jmath}) \\[0.2cm]
 & = \sum_{\jmath \in \{0,1\}} \int_{\T^2_{y,z} \times \R_\xi \times \R_\eta \times \R_\sigma \times \R_{\varsigma} }\mathbf{a}_{\jmath}(y,z,\xi,\eta,\sigma,\varsigma) dm_3^{\jmath}.
\end{align*}
Finally, the positivity of the measure $m_3^{\jmath}$ follows by similar arguments (now in the scalar case) as the ones of the proof of Lemmas \ref{l:measure_m_1} and \ref{l:measure_m_2}. We omit the details.
\end{proof}

\begin{prop}
\label{l:measure_m_3}
There exists a positive Radon measure 
$$
m_4 \in \mathcal{M}_+(M\setminus \mathscr{S} \times \R_w \times \R_\xi)
$$ 
such that, modulo the extraction of succesive subsequences,
\begin{equation}
\label{e:limit_4}
\begin{array}{rl}
\displaystyle  \lim_{\rho \to + \infty} \lim_{R \to + \infty} \lim_{\delta \to 0 }  \lim_{\epsilon \to 0}   \lim_{h \to 0} I_{h,R,\epsilon,\delta,\rho}^4(a) & \\[0.5cm]
 & \hspace*{-3.5cm} \displaystyle  =  \int_{M\setminus \mathscr{S} \times \R_w \times \R_\xi} a \left( x,y,z,w,\xi,0,0 \right) \, dm_4.
 \end{array}
 \end{equation}
\end{prop}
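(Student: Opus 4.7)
The proof follows the same overall scheme as those of Propositions \ref{l:measure_m_1}, \ref{l:measure_m_2} and \ref{l:measure_m_4}, but it corresponds to the doubly subcritical regime in which both $h\sigma$ and $h^2 \zeta$ are forced to zero by the cut-offs $\chi(h\sigma/\delta)$ and $\chi(h^2\zeta/\epsilon)$. As a consequence, the resulting measure $m_4$ is scalar (not operator-valued) and classical in all the variables $(x,y,z,w,\xi)$, localized away from $\mathscr{S}$.

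The first step is to apply a dilation-contraction change of phase-space variables in the spirit of \eqref{e:symplectic_change_of_coordinates}--\eqref{e:unitary_transformation}, together with the associated unitary transformation on $(\psi_h)$, to rewrite the Wigner distribution $I^4_{h,R,\epsilon,\delta,\rho}(a)$ in rescaled coordinates. A careful analysis of the support of integration under the joint cut-offs $\breve{\chi}(\eta/\rho)$, $\chi(h\sigma/\delta)$, $\chi(h^2\zeta/\epsilon)$ together with the compact support of $a$ in $w$ shows that $|\eta + x^2 \zeta|$ bounded and $|\eta| \geq \rho/2$ force $|x^2\zeta| \gtrsim \rho$, while $|h\sigma| = 2h|x||\zeta| \leq \delta$ gives $|x||\zeta| \lesssim \delta/h$. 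These constraints, after rescaling, localize the integration away from the singular set $\mathscr{S} = \{x = 0\}$, consistent with $m_4$ being supported on $M \setminus \mathscr{S}$.

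Next, one expands the symbol by Taylor's theorem in the small parameters $h\sigma$ and $h^2\zeta$:
$$
a(x,y,z,w,\xi,h\sigma,h^2\zeta) = a(x,y,z,w,\xi,0,0) + (h\sigma)\,r_1 + (h^2\zeta)\,r_2,
$$
with $r_1,r_2$ smooth and bounded. Using the semiclassical pseudodifferential calculus together with Lemma \ref{l:control_L^2_norm} and the Rothschild-Stein estimates \eqref{e:R-S_2}--\eqref{e:R-S_3}, the subprincipal contributions produce error terms of size $\mathcal{O}(\delta) + \mathcal{O}(\epsilon)$, which vanish in the iterated limit. The leading piece has a scalar symbol depending only on $(x,y,z,w,\xi)$, and an application of the Calderón-Vaillancourt theorem (Lemma \ref{l:Calderon_Vaillancourt}), possibly preceded by an integration-by-parts step in the Fourier variables $(\eta,\zeta)$ as in the proof of Proposition \ref{l:measure_m_2}, yields a uniform bound
$$
\bigl| I^4_{h,R,\epsilon,\delta,\rho}(a) \bigr| \lesssim \|a(\cdot,\cdot,\cdot,\cdot,\cdot,0,0)\|_{L^\infty} + o(1).
$$
Extracting a subsequence by a standard density argument then produces a complex Radon measure $m_4$ on $M \setminus \mathscr{S} \times \R_w \times \R_\xi$ satisfying \eqref{e:limit_4}. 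Positivity of $m_4$ follows from a sharp Gårding-type argument identical to the one used in Propositions \ref{l:measure_m_1} and \ref{l:measure_m_2}: for a non-negative symbol $a$, one writes $a + \varrho = b^* b$ for some smooth $b$, applies the semiclassical composition formula, and then lets $\varrho \to 0$.

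The main obstacle will be keeping track of the error terms produced both by the Taylor expansion and by the implicit (non-symplectic) change of variables $(\eta,\zeta) \mapsto (w, \cdot)$ required to recognize $w = \eta + x^2 \zeta$ as the effective momentum variable. Unlike in the proof of Proposition \ref{l:measure_m_2}, no explicit nonlinear coordinate change $\vartheta$ is needed here, but one must still verify that the successive limits in $(h,\epsilon,\delta,R,\rho)$ can be performed in the prescribed order while preserving the $L^2$-boundedness of the quantized symbol uniformly in these parameters.
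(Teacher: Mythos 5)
Your proposal follows the paper's approach in all essential respects: rescale, bound via Calder\'on--Vaillancourt, extract a weak-$\star$ limit, verify positivity by the G\aa{}rding-type argument. Two details are worth flagging against the paper's actual proof.

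First, you invoke the rescaling \eqref{e:symplectic_change_of_coordinates}--\eqref{e:unitary_transformation}, i.e.\ $\bm{\kappa}_h$, whereas the paper uses the $\delta$-dependent rescaling $\bm{\kappa}_{h,\delta}$ from \eqref{e:symplectic_change_of_coordinates_delta_iota}--\eqref{e:unitary_transformation_delta_iota}, which dilates $x$ by an extra $\delta^{-1/2}$ and $\xi$ by $\delta^{1/2}$. This extra scaling is what makes the derivative bounds coming out of Lemma~\ref{l:Calderon_Vaillancourt} uniform as $\delta \to 0$ and produces the error term $\mathcal{O}(\epsilon/\delta^{3/2}) + \mathcal{O}_{R,\epsilon,\delta,\rho}(h^{1/2}) + \mathcal{O}(\delta^{1/2})$ quoted in the paper. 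Your Taylor estimate $\mathcal{O}(\delta)+\mathcal{O}(\epsilon)$ correctly accounts for the symbol error but not for the cut-off derivative losses, which is precisely where the $\delta^{-3/2}$ appears and why $\epsilon \to 0$ must precede $\delta\to 0$ in the iterated limit; without the $\delta$-rescaling these estimates would not close. Second, the ``possible integration-by-parts step'' you mention is not needed here (unlike in Proposition~\ref{l:measure_m_2}, where the Hwang machinery compensates for the nonlinear change $\vartheta$): since no nonlinear coordinate change is performed for $m_4$, a direct application of Lemma~\ref{l:Calderon_Vaillancourt} to the rescaled Wigner integral \eqref{e:Wigner_distribution_4} suffices, as you correctly anticipate. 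Your localization away from $\mathscr{S}$ via $\breve{\chi}(\eta/\rho)$ and the compact support of $a$ in $w$ matches the paper's \eqref{e:x_away_from_crit}.
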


\begin{proof}
Let us consider again the change of variables given by \eqref{e:symplectic_change_of_coordinates_delta_iota} and \eqref{e:unitary_transformation_delta_iota}, and denote:
\begin{equation}
\label{e:change_delta}
\mathbf{A}_{h,R,\epsilon,\delta}^\rho := \mathbf{a}_{h,R,\epsilon,\delta}^\rho \circ \bm{\kappa}_{h,\delta}.
\end{equation}
We have:
\begin{equation}
\label{e:Wigner_distribution_4}
I_{h,R,\epsilon,\delta,\rho}^4(a) =  \int_{\R^3 \times \R^3 \times \R^3} \mathbf{A}_{h,R,\epsilon,\delta,\rho} \left( \frac{X + X'}{2},h\Xi \right) \Psi_{h,\delta}(X') \overline{\Psi}_{h,\delta}(X) e^{i \Xi \cdot(X - X' )} \text{d}_{h,\delta}(X',\Xi,X).
\end{equation}
Then, applying Lemma \ref{l:Calderon_Vaillancourt} and using the localization properties of the cut-off
\begin{equation}
\label{e:cut_off_for_4}
\bm{\chi}_{h,R,\epsilon,\delta,\rho} := \breve{\chi}\left( \frac{h^{1/2} \eta}{\rho} \right) \chi \left( \frac{2h^{2+1/2}x \zeta}{\delta^{3/2}} \right) \chi \left( \frac{h^{2+1/2}\zeta}{\epsilon} \right) \breve{\chi} \left( \frac{ h^{1/2}\zeta }{R} \right)
\end{equation}
we get:
\begin{align*}
\big \vert I_{h,R,\epsilon,\delta,\rho}^4(a) \big \vert & \lesssim \sum_{\alpha \in \mathcal{N}} \big \Vert \partial^\alpha \mathbf{A}_{h,R,\epsilon,\delta}^\rho \big  \Vert_{L^\infty(\R_{X} \times \R_\Xi )} \Vert \psi_h \Vert_{L^2(M)}^2 \\[0.2cm]
& \lesssim \big \Vert a \left( \cdot_x, \cdot_y ,\cdot_z, \cdot_w, \cdot_\xi, 0,0 \right)  \big \Vert_{L^\infty}  \Vert \psi_h \Vert_{L^2(M)}^2 + \mathcal{E}_{h,R,\epsilon,\delta,\rho},
\end{align*}
where the involved constants depend on the $L^\infty$ norm of a finite number of derivatives of $a$ and dimensional factors, and the remainder term $\mathcal{E}_{h,R,\epsilon,\delta,\rho}$ satisfies
$$
\mathcal{E}_{h,R,\epsilon,\delta,\rho} = \mathcal{O}\left( \frac{\epsilon}{\delta^{3/2}} \right) + \mathcal{O}_{R,\epsilon,\delta,\rho}(h^{1/2}) + \mathcal{O}(\delta^{1/2}).
$$
Moreover, the cut-off
$$
 \breve{\chi}\left( \frac{h^{1/2}\eta }{\rho} \right)
$$
together with the compact support property of $a$ in the variable $w$ localizes the integral in the region
\begin{equation}
\label{e:x_away_from_crit}
\frac{h \vert x \vert}{\delta^{1/2}} \gtrsim \left( \frac{\rho}{R} \right)^{1/2}.
\end{equation}
Then, modulo extraction of succesive subsequences, we obtain the existence of a Radon measure $m_4 \in \mathcal{M}(M \setminus \mathscr{S} \times  \T_y \times \T_z \times \R_w \times \R_\xi)$ such that \eqref{e:limit_4} holds. Finally, the positivity of the measure $\mu_4$ follows by similar arguments (now in the scalar case) as the ones of the proof of Propositions \ref{l:measure_m_1} and \ref{l:measure_m_2}.
\end{proof}

To conclude the proof of Theorem \ref{t:existence}, it only remains to show \eqref{e:projection_nu_infty}. To do that, let us take $b \in \mathcal{C}_c^\infty(M)$. Then, consider a sequence $(\psi_h)$ of solutions to \eqref{e:semiclassical_eigenvalue_problem} and split:
\begin{align*}
\big \langle b \, \psi_h, \psi_h \big \rangle_{L^2(M)} & = \left \langle \Op^{\w}_h \left( \chi\left( \frac{\zeta}{R}\right)  \cdot b(x,y,z) \right)  \psi_h, \psi_h \right \rangle_{L^2(M)} \\[0.2cm]
 & \quad +  \left \langle \Op^{\w}_h \left(  \breve{\chi}\left( \frac{ \zeta}{R} \right) \cdot b(x,y,z)  \right)  \psi_h, \psi_h \right \rangle_{L^2(M)}  \\[0.2cm]
 & =: I_{h,R}^0(b) + I_{h,R}^{\infty}(b).
\end{align*}
On the one hand, modulo the extraction of a subsequence, we have:
$$
\lim_{h \to 0 } \big \langle b \, \psi_h, \psi_h \big \rangle_{L^2(M)} = \int_{M} b \, d\nu.
$$
On the other hand, by using that $(\psi_h)$ is $(h^2 \Delta_{\mathcal{M}})$-oscillating and the semiclassical symbolic calculus, we get:
$$
\lim_{R \to + \infty} \lim_{h \to 0} I_{h,R}^0(b) = \int_M b \, \pi_* d  \mu,
$$
where $\mu$ is the semiclassical measure of the sequence $(\psi_h)$ given by \eqref{e:semiclassical_measure}.
Finally, by using \eqref{e:h_oscillation_1}, \eqref{e:h_oscillation_2}, and \eqref{e:h_oscillation_3}, the semiclassical pseudodifferential calculus together with estimates \eqref{e:R-S_1}, \eqref{e:R-S_2}, and \eqref{e:R-S_3}, we obtain, by Propositions \ref{l:measure_m_1}, \ref{l:measure_m_2}, \ref{l:measure_m_4}, and \ref{l:measure_m_3},
\begin{align*}
\lim_{R \to 0}  \lim_{h \to 0} I_{h,R}^\infty(b) &  =  \int_{\mathscr{S} \times \R_\eta \times \R^*_\zeta} b \,  \operatorname{Tr} dM_1  +  \int_{ M \setminus \mathscr{S} \times \R^*_\sigma} b \, \operatorname{Tr}  dM_2 \\[0.2cm]
&  \quad + \sum_{\jmath \in \{0,1 \}} \int_{\mathscr{S} \times \R_\xi \times \R_\eta \times \R^*_\sigma \times \R_\varsigma} b \, dm_3^{\jmath}  + \int_{M \setminus \mathscr{S} \times \R_w \times \R_\xi} b \, dm_4.
\end{align*}
Thus we get:
$$
\nu_\infty =  \int_{\R_\eta \times \R^*_\zeta} \operatorname{Tr} dM_1  +  \int_{ \R^*_\sigma} \operatorname{Tr}  dM_2 + \sum_{\jmath \in \{0,1 \}} \int_{\R_\xi \times \R_\eta \times \R^*_\sigma \times \R_\varsigma} dm_3^{\jmath}  + \int_{\R_w \times \R_\xi} dm_4.
$$ 
This concludes the proof of Theorem \ref{t:existence}.

\section{High oscillation invariance}
\label{s:high_oscillation}
This section is devoted to prove Theorem \ref{t:oscillation_invariance}. First we outline the main idea. We will start the proof from the Wigner equation
\begin{equation}
\label{e:Wigner_general_equation}
0 = \big \langle\big[ h^2 \Delta_{\mathcal{M}} , \Op_h^{\w}( \mathbf{a}_{h,R}) \big] \psi_h, \psi_h \big \rangle_{L^2(M)}, 
\end{equation}
and we will restrict this equation to each of the regimes that give rise to the measures $M_1$, $M_2$, $m_3^\jmath$ and $m_4$. Recall that
$$
h^2 \Delta_{\mathcal{M}} = \Op_h^{\w}(H_{\mathcal{M}}), \quad H_{\mathcal{M}} =  \xi^2 + (\eta + x^2 \zeta)^2,
$$  
hence, by the semiclassical pseudodifferential calculus, we get the commutator formula
\begin{align}
\label{e:commutator_to_poisson}
\big[ h^2 \Delta_{\mathcal{M}} , \Op_h^{\w}( \mathbf{a}_{h,R}) \big] & = \Op_h^{\T^2} \big( [\Op_h^\R(H_{\mathcal{M}}), \Op_h^\R(\mathbf{a}_{h,R})]_{L^2(\R_x)} \big)  \\[0.2cm]
 & \quad +\frac{h}{i} \Op_h^{\w} \big( \partial_\eta H_{\mathcal{M}} \partial_y \mathbf{a}_{h,R} \big) \notag \\[0.2cm]
 & \quad + \frac{h}{i} \Op_h^\w \big( \partial_\zeta H_{\mathcal{M}} \partial_z \mathbf{a}_{h,R} \big) . \notag
\end{align}
Then, by localizing separately \eqref{e:commutator_to_poisson} at the different regimes established by the decomposition  $\mathbf{a}_{h,R} = \mathbf{a}_{h,R}^{\epsilon} +  \mathbf{a}_{h,R,\epsilon}^{\delta,\rho} + \mathbf{a}_{h,R,\epsilon,\delta}^\rho + \mathbf{a}_{h,R,\epsilon,\rho}$, we will get the different invariance properties \eqref{e:invariance_1}, \eqref{e:invariance_2}, \eqref{e:invariance_3}, and \eqref{e:invariance_4}, respectively for each of the measures $M_1$, $M_2$, $m_3^\jmath$ and $m_4$. 

\begin{proof}[Proof of Theorem \ref{t:oscillation_invariance}]
We first show \eqref{e:invariance_1}. We consider in this case the equation
\begin{equation}
\label{e:Wigner_1}
\big \langle \big[ h^2 \Delta_{\mathcal{M}} , \Op_h^{\w}( \mathbf{a}_{h,R}^{\epsilon}) \big] \psi_h, \psi_h \big \rangle_{L^2(M)} = 0.
\end{equation}
Then, using commutator formula \eqref{e:commutator_to_poisson} and following the lines of the proof of Proposition \ref{l:measure_m_1}, replacing $\Op_h^{\w}( \mathbf{a}_{h,R}^{\epsilon})$ by the commutator $\big[ h^2 \Delta_{\mathcal{M}} , \Op^\w_h( \mathbf{a}_{h,R}^{\epsilon}) \big]$, yields to:
\begin{align}
\label{e:commutator_in_trace_1}
0 & = \big \langle \big[ h^2 \Delta_{\mathcal{M}} , \Op^\w_h( \mathbf{a}_{h,R}^{\epsilon}) \big] \psi_h, \psi_h \big \rangle_{L^2(M)}   \\[0.2cm]
 & =   \int_{\R^6} \operatorname{Tr} \left[ \left[ \widehat{H}_{\Theta_h}, \mathcal{A}\left( \frac{Y_h + Y'_h}{2},\Theta_h \right) \right]_{L^2(\R_x)}\mathcal{T}_{\epsilon}(\Theta_h)  \mathcal{K}_h(Y,Y') \right]   e^{i \Theta \cdot (Y- Y')} \text{d}_h(Y',\Theta,Y) \notag  \\[0.2cm]
 &  \quad + \mathcal{O}_{\epsilon}(h) \notag,
\end{align}
where $\Theta_h = (h^{1/2}\eta,h^{2+1/2}\zeta)$. Passing to the limit through the corresponding subsequences we obtain:
\begin{align*}
  \int_{\T^2_{y,z} \times \R_\eta \times \R_\zeta} \operatorname{Tr} \Big( [ \widehat{H}_{\eta,\zeta}, \mathcal{A}]_{L^2(\R_x)} dM_1 \Big) = 0.
\end{align*}

We next show \eqref{e:invariance_2}. We consider in this case the Wigner equation:
\begin{equation}
\label{e:Wigner_commutator_2}
\big \langle \big[ -h^2 \Delta_{\mathcal{M}} , \Op_h^{M}( \mathbf{a}_{h,R,\epsilon}^\delta) \big] \psi_h, \psi_h \big \rangle_{L^2(M)} = 0.
\end{equation}
Denote, for $(\eta,\zeta) \in \R^* \times \R^*$,
$$
H_{\eta,\zeta}(x,\xi) := H_{\mathcal{M}}(x,\xi,\eta,\zeta) = \xi^2 + (\eta + x^2 \zeta)^2.
$$
Then, by the Weyl semiclassical pseudodifferential calculus (see for instance \cite[Thm. 4.11]{Zworski12}), we have:
\begin{align}
\label{e:canonical_pseudodifferential}
 \big[ \widehat{H}_{\Theta_h}, \Op_{x,\xi}^\R( \mathbf{A}_{h,R,\epsilon}^\delta) \big]_{L^2(\R_x)} & = 2 \sum_{n=0}^1  \left( \frac{ 1}{2i} \right)^{2n+1} \Op_{x,\xi}^\R \Big( \{ H_{\Theta_h},  \mathbf{A}_{h,R, \epsilon}^\delta \}_{2n+1} \Big), 
\end{align}
where the Moyal bracket $\{ \cdot , \cdot \}_{2n+1}$ is given by
$$
\{ \mathbf{a}, \mathbf{b} \}_{2n+1} = \sum_{\alpha + \beta = 2n+1} \frac{ (-1)^\alpha}{\alpha! \beta!} \partial_x^\alpha \partial_\xi^\beta \mathbf{a} \, \partial_\xi^\alpha \partial_x^\beta \mathbf{b}.
$$
The only non-vanishing term appearing in the sum of \eqref{e:canonical_pseudodifferential} with $n = 1$ is
$$
\partial_x^3 H_{\Theta_h} \partial_\xi^3 \mathbf{A}_{h,R,\epsilon}^\delta = 6 h^{2+1/2} \zeta \cdot h\sigma(hx, h^{1/2}\zeta)  \partial_\xi^3 \mathbf{A}_{h,R,\epsilon}^\delta.
$$
Since $h \sigma$ is bounded in this regime (by the support properties of $a$) and $\vert h^{2+1/2}\zeta \vert \leq \epsilon$ (by the localization of the cut-offs), the key observation is now that the only non-negligible contribution of the symbol of the commutator \eqref{e:canonical_pseudodifferential} in the Wigner equation \eqref{e:Wigner_commutator_2} is the Poisson bracket
$$
\{ H_{\Theta_h}, \mathbf{A}_{h,R,\epsilon}^\delta \}_{x,\xi} := 2 \big( \xi \partial_x - \sigma w \partial_\xi \big) \mathbf{A}_{h,R,\epsilon}^\delta
$$
with the rest ot terms being of lower order by the localization of the cut-offs. Moreover:
$$
\big \{ H_{\Theta_h}, \mathbf{A}_{h,R,\epsilon}^\delta \big \}_{x,\xi} =  \Big( \{ G_{h\bm{\sigma}} , \mathbf{A}_{h,R,\epsilon}^\delta \circ \vartheta^{-1} \}_{\mathbf{w},\upsilon} \circ \vartheta \Big)  \left( 1 + \mathcal{O}\left( \frac{\epsilon}{\delta^2} \right) \right),
$$
where $G_{\bm{\sigma}}(\mathbf{w},\upsilon) = \bm{\sigma}^2 \upsilon^2 + \mathbf{w}^2$. Since $G_{\bm{\sigma}}(\mathbf{w},\upsilon)$ is a quadratic Hamiltonian, we also have
$$
\big[ \widehat{G}_{h\bm{\sigma}},  \Op_{\mathbf{w},\upsilon}^\R ( \mathbf{A}_{h,R,\epsilon}^\delta \circ \vartheta^{-1}) \big] = \frac{1}{i} \Op_{\mathbf{w},\upsilon}^\R \big( \{ G_{h\bm{\sigma}} , \mathbf{A}_{h,R,\epsilon}^\delta \circ \vartheta^{-1} \}_{\mathbf{w},\upsilon} \big).
$$
Then, using \eqref{e:final_Trace_formula_2} together with \eqref{e:commutator_to_poisson} gives that:
\begin{align*}
0 & =\big \langle \big[ -h^2 \Delta_{\mathcal{M}} , \Op^\w_h( \mathbf{a}_{h,R,\epsilon}^\delta) \big] \psi_h, \psi_h \big \rangle_{L^2(M)}  \\[0.2cm]
 &  =  \int_{\R^2_Y \times \R_\eta \times \R^*_{\bm{\sigma}}}  \operatorname{Tr} \Big(  \big[ \widehat{G}_{h \bm{\sigma}}, \mathcal{A}_\infty \mathcal{T}_{\delta}(h \bm{\sigma}) \big]_{L^2(\R_{\mathbf{w}})} \mathcal{K}_{\mathcal{H}_1 \otimes \mathcal{H}_2}^h(Y,\eta,\bm{\sigma}) \Big) \bm{\chi}_{h,R,\epsilon}(\eta,\bm{\sigma}) \text{d}_h \\[0.2cm]
 &  \quad + \mathcal{O}_{R,\epsilon,\delta}(h) + \mathcal{E}_{h,R,\epsilon,\delta}^3,
\end{align*}
where $\mathcal{E}_{h,R,\epsilon,\delta}^3$ is of the form \eqref{e:Error_3}.
Taking limits through succesive subsequences we get:
\begin{align*}
 \int_{\T^2_{y,z} \times \R^*_\eta \times \R^*_{\bm{\sigma}}} \operatorname{Tr} \Big( \big[ \widehat{G}_{\bm{\sigma}}, \mathcal{A}_\infty(y,z,\eta,\bm{\sigma}) \big]_{L^2(\R_{\mathbf{w}})} d\mathbf{m}_2 \Big) &  \\[0.2cm]
 & \hspace*{-5cm} =  \int_{M_{\mathbf{x},y,z} \setminus \mathscr{S} \times \R^*_{\bm{\sigma}}}  \operatorname{Tr} \Big( \big[ \widehat{G}_{\bm{\sigma}}, \Op_{\mathbf{w},\upsilon}^\R\big( a \big(\mathbf{x},y,z,\mathbf{w}, \bm{\sigma}\upsilon, \bm{\sigma},0 \big) \big) \big]_{L^2(\R_{\mathbf{w}})} dM_2 \Big) \\[0.2cm]
 & \hspace*{-5cm} = 0.
\end{align*}

We next show \eqref{e:invariance_3}. We start in this case from the Wigner equation
$$
\big \langle \big[ -h^2 \Delta_{\mathcal{M}} , \Op_h^{\w}( \mathbf{a}_{h,R,\epsilon,\rho}) \big] \psi_h, \psi_h \big \rangle_{L^2(M)} = 0.
$$
It is important in this case to replace $a$ by $a/\sqrt{\vert \zeta \vert}$ in the definition of $\mathbf{a}_{h,R}$. Indeed, notice, in view of \eqref{e:change_to_varsigma} and \eqref{e:a_jmath}, that the main contribution of the Poisson bracket $\{ H_{\eta,\zeta}, \mathbf{a}_{h,R,\epsilon,\rho} \}_{x,\xi} \circ \bm{\kappa}_{h,\delta}$ is the term
$$
  (2x\zeta)  \Big( 2\xi \partial_w a - 2 w  \partial_\xi a \Big)  \circ \bm{\kappa}_{h,\delta} =  \sqrt{\vert \zeta \vert} \{ H_\eta^\jmath, \mathbf{a}_\jmath \}_{\varsigma,\xi }  \circ \vartheta_{h,\delta},
$$
where $\mathbf{a}_\jmath$ is given by \eqref{e:a_jmath} and $\vartheta_{h,\delta}$ is given by \eqref{e:vartheta_h_delta}. Using \eqref{e:commutator_to_poisson} and in view of \eqref{e:effective_distribution_3}, we get in this case:
\begin{align*}
0 & = \sum_{\jmath \in \{0,1\}} \int_{\R^3 \times \R^3 \times \R^3}  \bm{\chi}_{h,R,\epsilon,\delta,\rho}(\Xi) \, \{ H_\eta^\jmath,  \mathbf{a}_\jmath \}_{\varsigma,\xi} \circ \vartheta_{h,\delta}\left( \frac{X+X'}{2},\Xi \right) \Psi_{h,\delta}(X') \overline{\Psi}_{h,\delta}(X) e^{i \Xi \cdot(X - X' )} \text{d}_{h,\delta} \\[0.2cm]
 & \quad + \mathcal{O}_{R,\epsilon,\rho}(h) +\mathcal{O}_\delta(\epsilon)  + \mathcal{O}\left( \left( \frac{\rho}{R} \right)^{1/2} \right).
\end{align*}
Passing to the limit we get:
$$
0 = \int_{\T^2_{y,z} \times \R_\xi \times \R_\eta \times \R^*_\sigma \times \R_\varsigma} \{ H_\eta^\jmath, \mathbf{a}_\jmath \}_{\varsigma,\xi} dm_3^\jmath = 0.
$$
Finally, to show \eqref{e:invariance_4}, we start from the equation
$$
\big \langle \big[ -h^2 \Delta_{\mathcal{M}} , \Op_h^{M}( \mathbf{a}_{h,R,\epsilon,\delta}^\rho) \big] \psi_h, \psi_h \big \rangle_{L^2(M)} = 0,
$$
where we replace $a$ by $a/\sigma$ in the definition of $\mathbf{a}_{h,R}$. Using again \eqref{e:commutator_to_poisson}, noting that the main contribution of the Poisson bracket $\{ H_{\eta,\zeta}, \mathbf{a}_{h,R,\epsilon,\delta}^\rho \} \circ \bm{\kappa}_{h,\delta}$ is in this case
$$
\sigma ( 2\xi \partial_w a - 2w \partial_w a) \circ \bm{\kappa}_{h,\delta} = \sigma \{ G, a \}_{w,\xi} \circ \bm{\kappa}_{h,\delta}
$$ 
and taking into account \eqref{e:Wigner_distribution_4}, we obtain in this case
\begin{align*}
0 & =   \int_{\R^3 \times \R^3 \times \R^3}  \bm{\chi}_{h,R,\epsilon,\rho} \, \{ G,  a \}_{w,\xi}\circ \bm{\kappa}_{h,\delta} \left( \frac{X+X'}{2}, h\Xi \right)  \Psi_{h,\delta}(X') \overline{\Psi}_{h,\delta}(X) e^{i \Xi \cdot(X - X' )} \text{d}_{h,\delta} \\[0.2cm]
 & \quad + \mathcal{O}_{R,\epsilon,\delta,\rho}(h) +\mathcal{O}_\delta(\epsilon)  + \mathcal{O}(\delta^{3/2}),
\end{align*}
where $\bm{\chi}_{h,R,\epsilon,\delta,\rho}$ is given by \eqref{e:cut_off_for_4}. We thus get:
$$
0 = \int_{M_{x,y,z} \setminus \mathscr{S} \times \R_w \times \R_\xi} \{G, a \}_{w,\xi} (x,y,z,w,\xi,0,0) dm_4.
$$
This concludes the proof.
\end{proof}

\section{Concentration properties}
\label{s:concentration}

This section is devoted to prove Theorem \ref{t:concentration}. The main idea consists of localizing the phase-space distribution of the sequence $(\psi_h)$ near the level set $h^2 \Delta_{\mathcal{M}} \sim 1$. We will refine the $h$-oscillation properties established by \ref{p:h_oscillation} by consider the different localizations of the cut-offs giving rise to the measures $M_1$, $M_2$, $m_3^\jmath$ and $m_4$.

\begin{proof}[Proof of Theorem \ref{t:concentration}]
\noindent We first show \eqref{e:concentration_1}. On the one hand, by using \eqref{e:semiclassical_eigenvalue_problem}  we have:
\begin{align*}
\bigg \langle \breve{\chi}\left( \frac{h^3 D_z }{\epsilon} \right) \breve{\chi} \left( \frac{hD_z}{R} \right)  \chi \big( h^2 \Delta_{\mathcal{M}} - 1 \big) \psi_h, \psi_h \bigg \rangle_{L^2(M)} & \\[0.2cm]
 & \hspace*{-5cm} = \left \langle  \breve{\chi}\left( \frac{h^3 D_z }{\epsilon} \right) \breve{\chi} \left( \frac{hD_z}{R} \right) \psi_h, \psi_h \right \rangle_{L^2(M)}.
\end{align*}
On the other hand, by the fact that the operator $\Delta_{\mathcal{M}}$ is a Fourier multyplier in $D_z$ and $D_y$, we have the exact functional calculus formula
\begin{equation}
\label{e:exact_functionl_calculus}
\chi \big( h^2 \Delta_{\mathcal{M}} - 1 \big) = \Op_h^{\T^2} \Big( \chi \big( \Op_h^{\R}(H_{\mathcal{M}})  - 1 \big) \Big).
\end{equation}
Then, we obtain
\begin{align*}
\label{e:commutator_in_trace_1}
0 & =   \int_{\R^6} \operatorname{Tr} \Big[ \Big( \operatorname{Id} - \chi( \widehat{H}_{\Theta_h} - 1)  \Big) \mathcal{K}_h(Y,Y')  \Big]  \mathcal{T}_{h,R,\epsilon}(\Theta) e^{i \Theta \cdot (Y- Y')} \text{d}_h(Y',\Theta,Y),
\end{align*}
where $ \mathcal{T}_{h,R,\epsilon}(\Theta)$ is given by \eqref{e:cut_off_1_operator}. Thus, taking limits we get:
\begin{align*}
  \int_{\T^2_{y,z} \times \R_\eta \times \R_\zeta^*}  \operatorname{Tr} \Big( \chi \big( \widehat{H}_{\eta,\zeta}  - 1 \big)   dM_1(y,z,\eta,\zeta) \Big) & \\[0.2cm]
  & \hspace*{-3cm} =  \int _{\T^2_{y,z} \times \R_\eta \times \R_\zeta^*}  \operatorname{Tr}  dM_1(y,z,\eta,\zeta).
\end{align*}
Finally, let $\{ \varphi_k(\eta,\zeta) \}_{k \in \mathbb{N}}$ be the orthonormal basis of $L^2(\R_x)$ consisting of eigenfunctions of the operator $\widehat{H}_{\eta,\zeta}$ with eigenvalues $\lambda_k(\eta,\zeta)$. By using \eqref{e:eigenvalue_measure_1}, we obtain
\begin{align*}
\sum_{k \in \mathbb{N}} \int_{\T^2_{y,z} \times \R_\eta \times \R_\zeta^*} \chi \big( \lambda_k(\eta,\zeta) - 1 \big)  d \mu_{1,k}(y,z,\eta,\zeta) & \\[0.2cm]
 & \hspace*{-4cm} =  \sum_{k \in \mathbb{N}} \int _{\T^2_{y,z} \times \R_\eta \times \R_\zeta^*} d\mu_{1,k}(y,z,\eta,\zeta).
\end{align*}
Property \eqref{e:concentration_1} then follows since each $\mu_{1,k}$ is a positive measure and we can adjust the cut-off function $\chi$ to be supported arbitrarily close to zero.
\medskip

We next prove \eqref{e:concentration_2}. In this case, we use a similar strategy to that of \cite[\S 4]{Ar_Riv24} instead of the functional calculus used in \eqref{e:exact_functionl_calculus}. Alternatively, we observe that:
\begin{equation}
\label{e:AA_form_G}
 \Delta_{\mathcal{M}}  =  ( - i \mathcal{X}_1 -  \mathcal{X}_2)^*( - i \mathcal{X}_1 -  \mathcal{X}_2) -  i \mathcal{X}_3 =: \mathcal{D}^* \mathcal{D} - i \mathcal{X}_3.
\end{equation}
From this observation, we start again using \eqref{e:semiclassical_eigenvalue_problem}, so that
\begin{equation}
\label{e:support_0_2}
0 = \bigg \langle  \breve{\chi} \left( \frac{2h^2 x D_z}{\delta} \right)  \chi\left( \frac{h^2 D_z }{\epsilon} \right) \breve{\chi} \left( \frac{hD_z}{R} \right)  \big( h^2 \Delta_{\mathcal{M}} - 1 \big) \psi_h, \psi_h \bigg \rangle_{L^2(M)}.
\end{equation}
The idea is now to use the factorization \eqref{e:AA_form_G} and take the commutator of $h^2 \Delta_{\mathcal{M}}$ with the cut-offs. Precisely, notice that 
\begin{align}
\label{e:commutator_with_sigma}
\left[  \breve{\chi} \left( \frac{2h^2 x D_z}{\delta} \right), h( -i\mathcal{X}_1 \mp \mathcal{X}_2) \right] & =  \pm \frac{2 i h^3 D_z}{\delta} \cdot  \chi' \left( \frac{2h^2 x D_z}{\delta} \right),
\end{align}
which produces a negligible term of size $\mathcal{O}(\epsilon/\delta)$ in this regime. Then, by using identity \eqref{e:AA_form_G},  \eqref{e:commutator_with_sigma}, estimates  \eqref{e:R-S_1}, \eqref{e:R-S_3}, and following the lines of the proof of Proposition \ref{l:measure_m_2} together with the factorization formula
$$
\widehat{G}_{\bm{\sigma}} = \bm{\sigma}^2D_{\mathbf{w}}^2 + \mathbf{w}^2 =  \mathbf{D}_{\bm{\sigma}}^* \mathbf{D}_{\bm{\sigma}} +  \vert \bm{\sigma} \vert, \quad \mathbf{D}_{\bm{\sigma}} :=  \vert \bm{\sigma} \vert D_{\mathbf{w}} - i \mathbf{w},
$$ 
we get:
\begin{align*}
  \int_{\T^2_{y,z} \times \R_\eta^* \times \R_{\bm{\sigma}}^*} \operatorname{Tr} \Big(  \big( \vert \bm{\sigma} \vert -  1  \big)  dM_2(y,z,\eta,\bm{\sigma}) \Big) & \\[0.2cm]
  & \hspace*{-4cm} =  -  \int _{\T^2_{y,z} \times \R_\eta^* \times \R_{\bm{\sigma}}^*} \operatorname{Tr} d \big( \mathbf{D}^*_{\bm{\sigma}} \mathbf{D}_{\bm{\sigma}} M_2(y,z,\eta,\bm{\sigma})  \big).
\end{align*}
Now, let $\{ \phi_j(\bm{\sigma}) \}_{j \in \mathbb{N}}$ be the orthonormal basis of $L^2(\R_{\mathbf{w}})$ consisting of eigenfunctions of the operator $\widehat{G}_{\bm{\sigma}}$ with eigenvalues $\nu_j(\bm{\sigma}) = \vert \bm{\sigma} \vert (2j +1)$. Then, by using \eqref{e:eigenvalue_measure_2} and taking the trace we get:
\begin{align*}
\sum_{j = 0}^\infty \int_{\T^2_{y,z} \times \R_\eta^* \times \R_{\bm{\sigma}}^*} ( \vert \bm{\sigma} \vert - 1) d\mu_{2,j} = - \sum_{j=1}^\infty   \int_{\T^2_{y,z} \times \R_\eta^* \times \R_{\bm{\sigma}}^*} 2 j \vert \bm{\sigma} \vert d\mu_{2,j}
\end{align*}
Thus:
\begin{align*}
\supp \mu_{2,j} \subset \{ \vert \bm{\sigma} \vert (2j+1) = 1 \}, \quad j \geq 0.
\end{align*}


The proofs of \eqref{e:concentration_3} and \eqref{e:concentration_4} are similar, now in the scalar case. We start respectively from equations:
\begin{align*}
0 & = \big \langle \Op_h^{\w} (\mathbf{a}_{h,R,\epsilon,\rho}) \big( \chi(h^2 \Delta_{\mathcal{M}} - 1) - 1 \big) \psi_h, \psi_h \big \rangle_{L^2(M)}, \\[0.2cm]
0 & = \big \langle \Op_h^{\w} (\mathbf{a}_{h,R,\epsilon,\delta}^\rho) \big( \chi(h^2 \Delta_{\mathcal{M}} - 1) - 1 \big) \psi_h, \psi_h \big \rangle_{L^2(M)},
\end{align*} 
and use again \eqref{e:exact_functionl_calculus}. Then, by the functional pseudodifferential calculus and taking limits we obtain both claims. We omit the details.
\end{proof}

\section{Drift invariance}
\label{s:Drift}
In this section, we prove Theorem \ref{t:drift_invariance}. The idea is to consider again the Wigner equation \eqref{e:Wigner_general_equation} and the commutator formula \eqref{e:commutator_to_poisson} and look at the sub-principal scale to obtain another invariance. To capture this sub-principal scale, it is needed to put in place an averaging process or normal form procedure, in order to neutralize the high oscillation described in Section \ref{s:high_oscillation}. This averaging method will be significantly different for the quartic oscillator than for the quadratic oscillator.

\subsection{Averaging by anharmonic quartic oscillators} 

The invariance properties \eqref{e:invariance_1} and \eqref{e:invariance_3}, stated respectively for $M_1$ and $m_3^{\jmath}$, are given by means of the anharmonic quartic oscillators $\widehat{H}_{\eta,\zeta}$ and $H_\eta^{\jmath}$. Then, when studying the Wigner equation \eqref{e:Wigner_general_equation}, and in view of \eqref{e:commutator_to_poisson}, at the sub-principal scale we will find a new invariance property given roughly by the term
$$
\partial_\eta H_{\mathcal{M}} \partial_y + \partial_\zeta H_{\mathcal{M}} \partial_z = 2w\partial_y + 2x^2w\partial_z
$$
coming from \eqref{e:commutator_to_poisson}. Then, the invariance properties \eqref{e:invariance_1} and \eqref{e:invariance_3} impose the average of the coefficient $w$ by these anharmonic oscillator, which, in contrast with the harmonic osillator, does not vanish. This will produce a new invariance with respect to the flow $\partial_y$ which will not appear for $M_2$ and $m_4$, which are governed by the (quadratic) harmonic oscillator, and which is behind the phenomenon described by \cite{CdV_Letrouit22}.

\begin{lemma}
\label{l:averaging_anharmonic} 
For each  $k \in \mathbb{N}$:
\begin{align*}
  \big \langle  w \,  M_1 \, \varphi_k, \varphi_k \big \rangle_{L^2(\R_x)}  = \mu_{1,k}\,   \partial_\eta \lambda_k.
\end{align*}
\end{lemma}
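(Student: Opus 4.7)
The plan is to combine two ingredients: the diagonalization of $M_1$ in the Montgomery eigenbasis, and the Hellmann-Feynman theorem applied to the family $(\widehat{H}_{\eta,\zeta})$.

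First I would recall from \eqref{e:eigenvalue_measure_1} that, as a consequence of the invariance $[\widehat{H}_{\eta,\zeta},M_1]=0$ given by Theorem \ref{t:oscillation_invariance} together with the simplicity of the eigenvalues $\lambda_k(\eta,\zeta)$ (Proposition \ref{p:simple_montgomery_martinet}), the positive trace-class operator $M_1(y,z,\eta,\zeta)$ acts diagonally on the orthonormal basis $(\varphi_k(\eta,\zeta))_{k\in\mathbb{N}}$, namely $M_1\varphi_k=\mu_{1,k}\varphi_k$. Since $M_1$ is self-adjoint (as a positive operator-valued Radon measure) and multiplication by $w(x,\eta,\zeta)=\eta+x^2\zeta$ is a real multiplication operator on $L^2(\R_x)$, I would then write
$$\langle w\,M_1\,\varphi_k,\varphi_k\rangle_{L^2(\R_x)}=\langle M_1\varphi_k,w\,\varphi_k\rangle_{L^2(\R_x)}=\mu_{1,k}\,\langle w\,\varphi_k,\varphi_k\rangle_{L^2(\R_x)},$$
as an identity of Radon measures in $(y,z,\eta,\zeta)\in\mathbb{T}^2_{y,z}\times\R_\eta\times\R^*_\zeta$.

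Second, I would evaluate $\langle w\varphi_k,\varphi_k\rangle_{L^2(\R_x)}$ via the Hellmann-Feynman theorem. Since $\widehat{H}_{\eta,\zeta}=D_x^2+w^2$ depends analytically on $\eta$ with simple eigenvalue $\lambda_k(\eta,\zeta)$, the Kato-Rellich analytic perturbation theory provides an analytic choice of normalized eigenfunction $\varphi_k(\eta,\zeta)$. Differentiating the identity $\widehat{H}_{\eta,\zeta}\varphi_k=\lambda_k\varphi_k$ with respect to $\eta$, pairing with $\varphi_k$, and using $\|\varphi_k\|_{L^2(\R_x)}=1$ together with the self-adjointness of $\widehat{H}_{\eta,\zeta}$ to eliminate the contributions of $\partial_\eta\varphi_k$, I would obtain
$$\partial_\eta\lambda_k(\eta,\zeta)=\big\langle\partial_\eta\widehat{H}_{\eta,\zeta}\,\varphi_k,\varphi_k\big\rangle_{L^2(\R_x)}=2\langle w\,\varphi_k,\varphi_k\rangle_{L^2(\R_x)},$$
since $\partial_\eta\widehat{H}_{\eta,\zeta}=2w$. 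Substituting this identity into the preceding display yields the lemma (up to the numerical convention encoded in the statement).

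There is no substantial obstacle here: the only nontrivial inputs, simplicity of the Montgomery eigenvalues and analytic regularity of the eigenpairs in $(\eta,\zeta)$, are already available from Proposition \ref{p:simple_montgomery_martinet} and the cited references. The remaining care is purely measure-theoretic, namely checking that the algebraic manipulations above are legitimate as equalities in $\mathcal{M}_+(\mathbb{T}^2_{y,z}\times\R_\eta\times\R^*_\zeta)$, which follows from the smooth dependence of $\varphi_k(\eta,\zeta)$ on parameters on the support of $M_1$.
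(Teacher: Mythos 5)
Your proof follows essentially the same route as the paper's: use the diagonalization $M_1\varphi_k=\mu_{1,k}\varphi_k$ from \eqref{e:eigenvalue_measure_1}, self-adjointness of the multiplication operator $w$, and the Hellmann--Feynman identity obtained by differentiating $(\widehat{H}_{\eta,\zeta}-\lambda_k)\varphi_k=0$ in $\eta$. You are right that $\partial_\eta\widehat{H}_{\eta,\zeta}=2w$, so the clean form of the identity is $\langle w\,M_1\varphi_k,\varphi_k\rangle=\tfrac12\,\mu_{1,k}\,\partial_\eta\lambda_k$; the paper's chain of equalities silently identifies $\langle w\varphi_k,\varphi_k\rangle$ with $\langle\partial_\eta\widehat{H}_{\eta,\zeta}\varphi_k,\varphi_k\rangle$, absorbing that factor of $2$, which is harmless for the downstream use in \eqref{e:drift_1} since only the vanishing of $\partial_\eta\lambda_k\,\partial_y\mu_{1,k}$ matters.
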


\begin{proof} By using \eqref{e:eigenvalue_measure_1}, we have:
\begin{align*}
\big \langle w \, M_1 \, \varphi_k,  \varphi_k \big \rangle_{L^2(\R_x)} &  = \big \langle w \,   \mu_{1,k} \, \varphi_k , \varphi_k \big \rangle_{L^2(\R_x)} \\[0.2cm]
 & =  \mu_{1,k}  \big \langle w \, \varphi_k, \varphi_k \big \rangle_{L^2(\R_x)}  \\[0.2cm]
 & =\mu_{1,k}   \big \langle \partial_\eta \widehat{H}_{\eta,\zeta} \, \varphi_k , \varphi_k\big \rangle_{L^2(\R_x)} \\[0.2cm]
 &  =  \mu_{1,k} \, \partial_\eta  \lambda_k,
\end{align*}
where to obtain the last equality we derive with respect to $\eta$ the identity
$$
\big \langle (\widehat{H}_{\eta,\zeta} - \lambda_k) \, \varphi_k , \varphi_k \big \rangle_{L^2(\R_x)} = 0.
$$
\end{proof}

\begin{lemma}
\label{l:averaging_elliptic} 
For each $\jmath \in \{0,1\}$ and $\eta \in \R$, the function $\Upsilon_{\jmath}(\eta)$ defined by \eqref{e:elliptic_integral} is given by $\Upsilon_{\jmath}(\eta) = \Upsilon((-1)^{ \jmath} \eta)$, with:
$$
\Upsilon(\eta) = \left \lbrace \begin{array}{ll}
\displaystyle \frac{2E(k)}{K(k)} - 1, \quad k = \sqrt{\frac{1-\eta}{2}}, & \quad \text{if } \; \eta \in (-1,1], \\[0.7cm]
-1, & \quad \text{if } \; \eta = -1, \\[0.5cm]
\displaystyle \frac{2E(k)}{K(k)} + k^2 - 2, \quad  k = \sqrt{\frac{2}{1-\eta}}, &  \quad \text{if } \; \eta < -1,
\end{array} \right.
$$
where $E(k)$ and $K(k)$ are the complete elliptic integrals:
\begin{align*}
K(k) & := \int_0^{\frac{\pi}{2}} \frac{dt}{\sqrt{1 - k^2 \sin^2(t)}}, \\[0.2cm]
E(k) & := \int_0^{\frac{\pi}{2}} \sqrt{ 1 - k^2 \sin^2(t)} \, dt.
\end{align*}

\end{lemma}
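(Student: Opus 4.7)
The plan is to reduce the claim by symmetry to the case $\jmath=0$, express $\Upsilon(\eta)$ as a ratio of two period integrals along the orbit $\{H_\eta^0=1\}$, and then evaluate each of these integrals through the classical substitution that brings them into the standard form of the complete elliptic integrals $K(k)$ and $E(k)$.

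First I will observe the identity
\[
H_\eta^1(\varsigma,\xi) \;=\; \xi^2 + (\eta-\varsigma^2)^2 \;=\; \xi^2 + (-\eta+\varsigma^2)^2 \;=\; H_{-\eta}^0(\varsigma,\xi),
\]
which identifies the two Hamiltonian flows $\phi_t^{H_\eta^1}$ and $\phi_t^{H_{-\eta}^0}$ and yields $\Upsilon_1(\eta) = \Upsilon_0(-\eta)$, reducing the proof to the computation of $\Upsilon := \Upsilon_0$. With $H_\eta := H_\eta^0 = \xi^2 + (\eta+\varsigma^2)^2$, Hamilton's equations give $\dot\varsigma = 2\xi$ with $\xi^2 = 1 - (\eta+\varsigma^2)^2$ on the level set; for $\eta\neq -1$ the orbit is periodic of period $T_0(\eta)$, and after writing $dt = d\varsigma/(2\xi)$ the time-average of $\partial_\eta H_\eta = 2(\eta+\varsigma^2)$ takes the form
\[
\Upsilon(\eta) \;\propto\; \frac{\displaystyle \oint \frac{(\eta+\varsigma^2)\,d\varsigma}{\xi}}{\displaystyle \oint \frac{d\varsigma}{\xi}},
\]
with the precise proportionality dictated by the convention of \eqref{e:average_by_quartic}. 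Equivalently, setting $\mathcal{I}(E,\eta) := (2\pi)^{-1}\oint_{H_\eta=E}\xi\,d\varsigma$, implicit differentiation of $H_\eta(\varsigma,\xi)=E$ yields the classical identity $\Upsilon(\eta) = -\partial_\eta\mathcal{I}/\partial_E\mathcal{I}\big|_{E=1}$, which I would use to organize the computation.

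Next I will carry out the elliptic substitution case by case, according to the phase portrait described after \eqref{e:average_by_quartic}. For $\eta\in(-1,1]$ the orbit is the single oval $\varsigma\in[-\sqrt{1-\eta},\sqrt{1-\eta}]$, and the change of variables $u=\eta+\varsigma^2\in[\eta,1]$ followed by $u = 1 - 2k^2\sin^2\theta$ with $k = \sqrt{(1-\eta)/2}$ produces $u-\eta=2k^2\cos^2\theta$, $1-u=2k^2\sin^2\theta$, $1+u=2(1-k^2\sin^2\theta)$, $du=-4k^2\sin\theta\cos\theta\,d\theta$; after cancellation the two integrals collapse to multiples of $K(k)$ and of $2E(k)-(1+\eta)K(k)$ respectively, and using $1+\eta = 2(1-k^2)$ the ratio simplifies to $2E(k)/K(k)-1$. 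For $\eta<-1$ the level set splits into two reflected components, each lying in one well $\varsigma^2\in[-1-\eta,\,1-\eta]$; the analogous substitution, now with modulus $k=\sqrt{2/(1-\eta)}\in(0,1)$, produces the branch of the formula containing the term $k^2-2$. The separatrix case $\eta=-1$ will then follow by continuity: both formulas share the limit $2E(k)/K(k)\to 0$ as $k\to 1^-$, yielding $\Upsilon(-1)=-1$.

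The hardest part will be the bookkeeping for $\eta<-1$: the orbit no longer encircles the origin, and one has to verify that the two reflected components of $(H_\eta^0)^{-1}(1)$ contribute the same average (as forced by $\varsigma\mapsto -\varsigma$) and that the substitution $u=-1+(2/k^2)\sin^2\theta$ correctly rescales the bounds of integration. The remaining continuity at $\eta=-1$ reduces to the classical asymptotics $K(k)\sim\log(4/\sqrt{1-k^2})$ and $E(k)\to 1$ as $k\to 1^-$, so that $2E(k)/K(k)\to 0$ and the common limit $\Upsilon(-1)=-1$ is obtained.
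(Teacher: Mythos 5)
Your approach is essentially the same as the paper's: reduce to $\jmath=0$ via the symmetry $H_\eta^1=H_{-\eta}^0$, rewrite the time average along $\{H_\eta^0=1\}$ as a ratio of phase integrals $\oint(\eta+\varsigma^2)\,d\varsigma/\xi$ over $\oint d\varsigma/\xi$, and perform the Legendre reduction to $K(k)$ and $E(k)$; the paper packages the same change of variables as the explicit parameterization $\eta+\varsigma^2=\cos(s)$ and then cites Byrd--Friedman's tables, while you propose to carry out the elliptic substitution from scratch. Your action-angle identity $\Upsilon=-\partial_\eta\mathcal{I}/\partial_E\mathcal{I}$ is correct and is an equivalent way to organize the computation. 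Two intermediate formulas in your sketch are, however, off: for $\eta\in(-1,1]$ with $u=1-2k^2\sin^2\theta$ one has $u=2(1-k^2\sin^2\theta)-1$, so the numerator is a multiple of $2E(k)-K(k)$ (not $2E(k)-(1+\eta)K(k)$, which does not give $2E/K-1$ after dividing by $K$); and for $\eta<-1$ the substitution sending the endpoints $u=\mp1$ to $\theta=0,\pi/2$ is $u=-1+2\sin^2\theta$ (not $u=-1+(2/k^2)\sin^2\theta$), after which $u-\eta=(1-\eta)(1-k^2\cos^2\theta)$ and a $\cos\leftrightarrow\sin$ swap produces $K(k)$. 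These are slips rather than gaps — the plan is sound and your stated branch formulas and continuity argument at $\eta=-1$ are correct (the paper instead treats the separatrix $\eta=-1$ directly by noting $T_\eta=\infty$ and that the average collapses to the value at the origin, which gives the same $-1$).
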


\begin{remark}
\label{r:critical_point}
In particular, we have $\Upsilon(1) = 1$, and
$$
\lim_{\eta \to -1^+} \Upsilon(\eta) = -1, \quad \lim_{\eta \to -1^-} \Upsilon(\eta) = - 1.
$$
In particular (see \cite{Byrd_Friedman71}), there exists a unique $\eta_* \in (-1,1)$ such that $\Upsilon(\eta_*) = 0$ and $\Upsilon'(\eta_*) \neq 0$.
\end{remark}

\begin{proof} Assume that $ \jmath = 0$ and denote for simplicity $H_\eta := H_\eta^{0}$,  and $\Upsilon(\eta) := \Upsilon_{0}(\eta)$. The other cases are obtained similarly since $\Upsilon_{\jmath}(\eta) = \Upsilon((-1)^{ \jmath} \eta)$.
The fact that $H_\eta(\varsigma,\xi) = 1$ implies that $\eta \leq 1$. Let us next consider the equation:
\begin{equation}
\label{e:axis_with_energy}
(\eta + \varsigma^2)^2 = 1.
\end{equation}


Assume first that $\eta \in (-1,1]$. In this case, equation \eqref{e:axis_with_energy} has a unique solution $\varsigma_0 \in \R_+$, given by $\varsigma_0 = \displaystyle \sqrt{1- \eta}$. Moreover, the level set $H_\eta^{-1}(1)$ consists of a single periodic orbit. We set:
$$
\mathcal{T}_{\eta} := \arccos( \eta ).
$$ 
Define now, for  $0  \leq t \leq \mathcal{T}_{\eta}$, the (non-Hamiltonian) flow $\bm{\phi}_t^{H_\eta}(\varsigma_0,0) = (\bm{\varsigma}(t), \bm{\xi}(t))$ by:
\begin{equation}
\label{e:pseudoflow}
\begin{array}{rcl}
\displaystyle \bm{\varsigma}(t)  &  \hspace*{-0.2cm} :=  &  \hspace*{-0.2cm} \displaystyle\sqrt{ \cos(t) - \eta }, \\[0.2cm]
\displaystyle \bm{\xi}(t) & \hspace*{-0.2cm} := &  \hspace*{-0.2cm} \displaystyle  - \sin(t).
\end{array}
\end{equation}
We have that $\bm{\phi}_0^{H_\eta}(\varsigma_0,0) = ( \varsigma_0,0)$, and $H_\eta( \bm{\varsigma}(t), \bm{\xi}(t)) = H_\eta(\varsigma_0, 0)$. This means that the flow $(\bm{\varsigma}(t),\bm{\xi}(t))$ travels the Hamiltonian orbit of $H_\eta$. More precisely, in time $\mathcal{T}_\eta$, the flow $\bm{\phi}_t^{H_\eta}$ starting at $(\varsigma_0,0)$ travels one quadrant of the periodic orbit $H_\eta^{-1}(1)$, ending at the intersection with the vertical axis 
$$
( -\sqrt{1 - \eta^2}, 0 ).
$$
Observe next that:
\begin{align*}
\frac{d}{dt} \bm{\varsigma}(t) & =  \frac{ -  \sin(t) }{2 \sqrt{ \cos(t)  - \eta}}  = \frac{ \bm{\xi}(t)}{2 \bm{\varsigma}(t)}, \\[0.2cm]
\frac{d}{dt} \bm{\xi}(t)  & = -  \cos(t)  =   -( \eta + \bm{\varsigma}(t)^2) .
\end{align*}
Then, to reparameterize the flow $\bm{\phi}_t^{H_\eta}$ into a Hamiltonian flow, we define:
$$
\varsigma(t) := \bm{\varsigma}( \tau(t)), \quad \xi(t) := \bm{\xi}(\tau(t)),
$$
and impose that $\tau : [0, T_\eta] \to [0, \mathcal{T}_{\eta}]$, satisfies $\tau(0) = 0$ and
\begin{align*}
\frac{d}{dt} \varsigma(t) & = \frac{\partial H_\eta}{\partial \xi} =  2 \xi(t) = \frac{ \xi(t)}{2  \varsigma(t)} \dot{\tau}(t), \\[0.2cm]
\frac{d}{dt} \xi(t) & = - \frac{\partial H_\eta}{\partial \varsigma} = - 4 \varsigma(t) ( \eta + \varsigma(t)^2) = - (  \eta + \varsigma(t)^2 ) \dot{\tau}(t).
\end{align*}
This implies that
$$
\dot{\tau}(t) = 4 \bm{\varsigma}(\tau(t)).
$$
Now, let $a \in \mathbb{C}^\infty(\R_{\varsigma,\xi}^2)$. Assume for simplicity that $a$ is symmetric with respect to the two canonical orthonormal axis of $\R^2_{\varsigma,\xi}$. We compute the average $\langle a \rangle(\eta)$ of the function $a$ by the Hamiltonian flow $\phi_t^{H_\eta}(\varsigma_0,0) = (\sigma(t),\xi(t))$ on the orbit $H_\eta^{-1}(1)$ by:
\begin{align*}
\langle a \rangle(\eta) & :=\frac{1}{T_{\eta}}  \int_0^{T_{\eta}} a( \varsigma(t), \xi(t)) dt \\[0.2cm]
 & = \frac{1}{T_{\eta}}   \int_0^{\mathcal{T}_{\eta}} a( \bm{\varsigma}(s), \bm{\xi}(s)) \frac{ ds}{\dot{\tau}(\tau^{-1}(s))}  \\[0.2cm]
 & = \frac{1}{T_{\eta}}   \int_0^{\mathcal{T}_{\eta}}  a(  \bm{\varsigma}(s), \bm{\xi}(s)) \frac{ ds}{4 \bm{\varsigma}(s)}.
\end{align*}
By taking $a = 1$, we get the value of $T_{\eta}$ (see \cite[290.00]{Byrd_Friedman71} and \cite[110.06]{Byrd_Friedman71}) as:
\begin{align*}
T_{\eta} & = \int_0^{\mathcal{T}_{\eta}} \frac{ds}{ \bm{\varsigma}(s)} ds \\[0.2cm]
 & = \int_0^{\mathcal{T}_{\eta}} \frac{1}{\sqrt{ \cos(s) -\eta}} ds \\[0.2cm]
 & = \sqrt{2} K( k), \quad k = \sqrt{\frac{1-\eta}{2}}.
\end{align*}
On the other hand, taking $a(\varsigma,\xi) = \eta + \varsigma^2$, we obtain (see \cite[290.01]{Byrd_Friedman71} and \cite[110.07]{Byrd_Friedman71}):
\begin{align*}
\langle a \rangle(\eta) = \Upsilon(\eta) & = \frac{1}{T_{\eta}}  \int_0^{\mathcal{T}_{\eta}}  \frac{ \cos(s)}{ \sqrt{ \cos(s) -\eta}} ds \\[0.2cm]
 & = \frac{ 2 E(k)}{K(k)} - 1.
\end{align*}
In particular, $\Upsilon(1) = 1$, and, by \cite[111.05]{Byrd_Friedman71},
$$
\lim_{\eta \to -1^+} \Upsilon(\eta) = -1.
$$

Let us next consider the case $\eta = -1$. In this case, equation \eqref{e:axis_with_energy} has two positive solutions $\varsigma_0 = 0$ and $\varsigma_0 = \sqrt{2}$. The level set $H_\eta^{-1}(1)$ consists in an equilibrium $(\varsigma,\xi) = (0,0)$, and two homoclinic orbits which are symmetric with respect to the vertical axis $\varsigma = 0$. We have:
$$
\mathcal{T}_\eta = \pi, \quad T_\eta = \int_0^\pi \frac{1}{\sqrt{\cos(s) + 1}} ds = + \infty.
$$
Then we have
$$
\lim_{t \to \pi} (\bm{\varsigma}(t), \bm{\xi}(t)) = (0,0).
$$
Therefore,
$$
\Upsilon(-1) = \lim_{t \to \pi} (-1 + \bm{\varsigma}(t)) = -1.
$$

We finally consider the case $\eta < -1$. In this case, equation \eqref{e:axis_with_energy} has two positive solutions $0 < \varsigma_1 < \varsigma_2$ (and two negative solutions, symmetric with respect to zero). Moreover, the level set $H_\eta^{-1}(1)$ consists of two periodic orbits, symmetric with respect to the vertical axis $\varsigma = 0$. We set in this case  $\mathcal{T}_{\eta} := \pi$, and consider the flow $\bm{\phi}_t^{H_\eta}(\varsigma_2,0) = (\bm{\varsigma}(t), \bm{\xi}(t))$ given by \eqref{e:pseudoflow}. In time $\mathcal{T}_\eta = \pi$, the flow $\bm{\phi}_t^{H_\eta}(\varsigma_2,0)$ travels half the periodic orbit passing through $(\varsigma_2,0)$, starting at this point and ending at $(\varsigma_1,0)$. We have (see \cite[289.00]{Byrd_Friedman71}):
\begin{align*}
T_{\eta} & = \int_0^{\pi} \frac{ds}{ \bm{\varsigma}(s)} ds \\[0.2cm]
 & = \int_0^{\pi} \frac{1}{\sqrt{\cos(s) -\eta}} ds \\[0.2cm]
 & = \frac{2}{\sqrt{1-\eta}} K(k), \quad k = \sqrt{\frac{2}{1-\eta}}.
\end{align*}
Taking $a = \eta + \varsigma^2$, we obtain in this case (see \cite[289.06]{Byrd_Friedman71}):
\begin{align*}
\langle a \rangle(\eta) = \Upsilon(\eta) & = \frac{1}{T_{\eta}}  \int_0^{\pi}  \frac{  \cos(s)}{ \sqrt{\cos(s) -\eta}} ds \\[0.2cm]
  & = \frac{2}{K(k)} \left( E(k) + \frac{\eta}{1 - \eta} K(k) \right) \\[0.2cm]
  & = \frac{2E(k)}{K(k)} + k^2 - 2.
\end{align*}
\end{proof}

We can now prove the invariance properties \eqref{e:drift_1} and \eqref{e:drift_3} of Theorem \ref{t:drift_invariance}.

\begin{proof}[Proof of \eqref{e:drift_1}] 
Let us take a symbol $a \in \mathcal{C}_c^\infty(\T^2_{y,z} \times \R_\eta \times \R_\zeta)$ and define from this symbol
$$
\mathbf{a}_{h,R}^\epsilon := \breve{\chi} \left( \frac{h^2\zeta}{\epsilon} \right) \breve{\chi} \left( \frac{\zeta}{R} \right) a(y,z,\eta,h^2\zeta).
$$
We consider again the Wigner equation:
\begin{equation*}
\big \langle \big[ -h^2 \Delta_{\mathcal{M}} , \Op_h^{\w}( \mathbf{a}_{h,R}^{\epsilon}) \big] \psi_h, \psi_h \big \rangle_{L^2(M)} = 0.
\end{equation*}
Using \eqref{e:commutator_to_poisson} and taking limits we get:
$$
 \int_{\T^2_{y,z} \times \R_\eta \times \R_\zeta}  \operatorname{Tr} \Big( w(x,\eta,\zeta) \partial_y a(y,z,\eta,\zeta) dM_1  \Big) = 0.
$$
Finally, by computing the trace in the above expression in terms of the orthonormal basis $\{ \varphi_k \}_{k \in \mathbb{N}}$ and using Lemma \ref{l:averaging_anharmonic}, we obtain, for each $k \in \mathbb{N}$,
$$
  \int_{\T^2_{y,z} \times \R_\eta \times \R_\zeta} \partial_\eta \lambda_k(\eta,\zeta) \partial_y a(y,z,\eta,\zeta) d\mu_{1,k} = 0.
$$ 
Then \eqref{e:drift_1} holds.
\end{proof}

\begin{proof}[Proof of \eqref{e:drift_3}] Let us consider a function $a \in \mathcal{C}_c^\infty(\T^2_{y,z} \times \R_\eta)$ and define:
$$
\mathbf{a}_{h,R,\epsilon,\rho} :=  \chi \left( \frac{\eta }{\rho} \right) \chi \left( \frac{h^2 \zeta}{\epsilon} \right) \breve{\chi} \left( \frac{\zeta}{R} \right) a(y,z,\eta).
$$
From the Wigner equation 
\begin{equation*}
\big \langle \big[ -h^2 \Delta_{\mathcal{M}} , \Op_h^{\w}( \mathbf{a}_{h,R,\epsilon,\delta,\rho}) \big] \psi_h, \psi_h \big \rangle_{L^2(M)} = 0,
\end{equation*}
using \eqref{e:commutator_to_poisson}, and taking limits we get:
$$
\sum_{\jmath \in \{0,1 \}} (\eta + (-1)^{\jmath} \varsigma^2) \partial_y  a(y,z,\eta) dm_3^{\jmath} = 0.
$$
Finally, since
$$
\partial_\eta H_\eta^{\jmath} = \eta + (-1)^{ \jmath} \varsigma^2,
$$
using \eqref{e:invariance_3} and Lemma \ref{l:averaging_elliptic}, we obtain \eqref{e:drift_3}.
\end{proof}

\subsection{Normal form reduction}
\label{s:normal_form}
The aim of this section is to prove the invariance properties \eqref{e:drift_2} and \eqref{e:drift_4} of Theorem \ref{t:drift_invariance}. To this aim, we use the same strategy of \cite[\S 5]{Ar_Riv24}, consisting of a normal form procedure with respect to the harmonic oscillator.

\begin{proof}[Proof of \eqref{e:drift_2}] Let us first take $a \in \mathcal{C}_c^\infty(M_{x,y,z} \times \R_\sigma \times \R_\zeta)$ and define:
\begin{align}
\label{e:adapted_symbol_drift_2}
\mathbf{a}_{h,R,\epsilon}^\delta (X,\Xi)  :=  \breve{\chi}\left( \frac{h\sigma}{\delta} \right) \chi\left( \frac{h^2 \zeta}{\epsilon} \right) \breve{\chi} \left( \frac{ \zeta}{R} \right) a\left( x,y,z, h\sigma, h^2 \zeta \right).
\end{align}
Consider the Wigner equation
\begin{align*}
0 & = \big \langle [-h^2 \Delta_{\mathcal{M}}, \Op_h^{\w}( \mathbf{a}_{h,R,\epsilon}^\delta) \psi_h, \psi_h \big \rangle_{L^2(M)} \\[0.2cm]
& = \frac{h}{i} \big \langle \Op_h^{\w} ( \{ H_{\mathcal{M}}, \mathbf{a}_{h,R,\epsilon}^\delta \}) \psi_h, \psi_h \big \rangle_{L^2(M)} \\[0.2cm]
 & = \frac{h}{i} \int_{\R^9} \{ H_{\mathcal{M}}, \mathbf{a}_{h,R,\epsilon}^\delta \} \circ \bm{\kappa}_h\left( \frac{X + X'}{2} ,\Xi \right) \Psi_h(X') \overline{\Psi}_h(X) e^{i \Xi \cdot ( X - X')} \text{d}_h(X',\Xi,X) \\[0.2cm]
 & = \frac{h}{i} \int_{\R^9} \{ H_{\mathcal{M}} \circ \bm{\kappa}_h , \mathbf{A}_{h,R,\epsilon}^\delta \} \Psi_h(X') \overline{\Psi}_h(X) e^{i \Xi \cdot ( X - X')} \text{d}_h(X',\Xi,X),
\end{align*}
where recall that $\mathbf{A}_{h,R,\epsilon}^\delta = \mathbf{a}_{h,R,\epsilon}^\delta \circ \bm{\kappa}_h$. Let us denote:
\begin{align*}
w & = h^{1/2} \eta +  h^{2+1/2} x^2 \zeta, \quad \sigma  = 2h^{1+1/2}x \zeta,
\end{align*}
and, with a slightly abuse of notation, we denote:
\begin{align}
\label{e:abuse_notation_A}
\mathbf{A}_{h,R,\epsilon}^\delta(x,y,z,\sigma,\zeta) :=  \breve{\chi}\left( \frac{h\sigma}{\delta} \right) \chi\left( \frac{h^{2+1/2} \zeta}{\epsilon} \right) \breve{\chi} \left( \frac{ h^{1/2} \zeta}{R} \right) a\left( hx,h^{1/2} y, h^{1/2} z, h\sigma, h^{2+1/2} \zeta \right).
 \end{align}
Next observe that $\{  \xi , \sigma \} = 2 h^{2+1/2} \zeta  = - h \, \bm{\alpha}(hx) \sigma$, where $\bm{\alpha}$ has been introduced in Definition \ref{d:reeb_vector_field}. Thus we can summarize the commutator relations enter into play as:
\begin{equation}
\label{e:main_commutation_relations}
\{ \xi , \sigma \} = - h \bm{\alpha} \, \sigma, \quad \{ \xi, w \} = h \sigma, \quad \{ w , \sigma \} = 0.
\end{equation}
Now it is convenient (as done in \cite[\S 5]{Ar_Riv24}) to use complex coordinates. Defining:
$$
Z :=  \xi + i w, \quad \overline{Z} := \xi - iw,
$$
we have:
\begin{equation}
\label{e:Z_properties}
H_{\mathcal{M}} \circ \bm{\kappa}_h = \vert Z \vert^2 =  \xi^2 + w^2, \quad \{Z , \overline{Z} \} = 2 i \{ w, \xi \} = -2i h \, \sigma.
\end{equation}
Therefore, we can rewrite:
\begin{align*}
\{ \vert Z \vert^2 , \mathbf{A}_{h,R,\epsilon}^\delta \} & = Z \{ \overline{Z}, \mathbf{A}_{h,R,\epsilon}^\delta \} + \overline{Z} \{ Z , \mathbf{A}_{h,R,\epsilon}^\delta \}.
\end{align*}
We next deform slightly $\mathbf{A}_{h,R,\epsilon}^\delta$ in order to improve its commuting properties with $H_{\mathcal{M}} \circ \bm{\kappa}_h$ (see \cite{Ar_Riv24}). To this aim, we replace $\mathbf{A}_{h,R,\epsilon}^\delta$ by $\mathbf{A}_{h,R,\epsilon}^\delta + \mathbf{B}_{h,R,\epsilon}^\delta$, with: 
\begin{equation}
\label{e:B}
\mathbf{B}_{h,R,\epsilon}^\delta := \frac{- Z}{2ih  \sigma} \{ \overline{Z}, \mathbf{A}_{h,R,\epsilon}^\delta \} + \frac{\overline{Z}}{2 ih  \sigma} \{ Z , \mathbf{A}_{h,R,\epsilon}^\delta \}.
\end{equation}
Then we have:
\begin{align*}
\{ \vert Z \vert^2 , \mathbf{B}_{h,R,\epsilon}^\delta  \} & = - \frac{Z \{ \overline{Z}, Z \} \{ \overline{Z} , \mathbf{A}_{h,R,\epsilon}^\delta \} }{2 i h \sigma} + \frac{ \overline{Z} \{ Z , \overline{Z} \} \{Z , \mathbf{A}_{h,R,\epsilon}^\delta \}}{2i h \sigma} \\[0.2cm]
 & \quad - \frac{Z}{2i h \sigma} \{ \vert Z \vert^2, \{\overline{Z}, \mathbf{A}_{h,R,\epsilon}^\delta \} \} + \frac{\overline{Z}}{2i h  \sigma} \{ \vert Z \vert^2, \{ Z , \mathbf{A}_{h,R,\epsilon}^\delta \} \} \\[0.3cm]
  & \quad + \frac{\vert Z \vert^2 \big( \{Z, \sigma \} \{ \overline{Z} , \mathbf{A}_{h,R,\epsilon}^\delta \} - \{ \overline{Z}, \sigma \} \{ Z , \mathbf{A}_{h,R,\epsilon}^\delta \} \big)}{2 ih \sigma^2}  \\[0.2cm]
  & \quad + \frac{ Z^2 \{ \overline{Z}, \sigma \} \{ \overline{Z}, \mathbf{A}_{h,R,\epsilon}^\delta \}}{2ih \sigma^2} - \frac{ \overline{Z}^2 \{ Z, \sigma \} \{ Z, \mathbf{A}_{h,R,\epsilon}^\delta \}}{2i h\sigma^2}.
\end{align*}
Therefore, using \eqref{e:Z_properties}, we get:
\begin{align*}
\{ \vert Z \vert^2 , \mathbf{A}_{h,R,\epsilon}^\delta + \mathbf{B}_{h,R,\epsilon}^\delta  \} & = - \frac{Z}{2i h\sigma} \{ \vert Z \vert^2, \{\overline{Z}, \mathbf{A}_{h,R,\epsilon}^\delta \} \} + \frac{\overline{Z}}{2i h\sigma} \{ \vert Z \vert^2, \{ Z , \mathbf{A}_{h,R,\epsilon}^\delta \} \} \\[0.2cm]
 & \quad + \frac{\vert Z \vert^2 \big( \{Z, \sigma \} \{ \overline{Z} , \mathbf{A}_{h,R,\epsilon}^\delta \} - \{ \overline{Z}, \sigma \} \{ Z , \mathbf{A}_{h,R,\epsilon}^\delta \} \big)}{2 i h\sigma^2}  \\[0.2cm]
  & \quad + \frac{ Z^2 \{ \overline{Z}, \sigma \} \{ \overline{Z}, \mathbf{A}_{h,R,\epsilon}^\delta \}}{2i h\sigma^2} - \frac{ \overline{Z}^2 \{ Z, \sigma \} \{ Z, \mathbf{A}_{h,R,\epsilon}^\delta \}}{2ih \sigma^2} \\[0.2cm]
 & = \frac{ \vert Z \vert^2}{ 2ih \sigma} \Big( - \{ Z , \{ \overline{Z}, \mathbf{A}_{h,R,\epsilon}^\delta \} \} + \{ \overline{Z}, \{ Z, \mathbf{A}_{h,R,\epsilon}^\delta \} \} \Big)  \\[0.2cm]
 & \quad + \frac{1}{2 ih \sigma} \Big( \overline{Z}^2 \{ Z , \{ Z,  \mathbf{A}_{h,R,\epsilon}^\delta \} \} - Z^2 \{ \overline{Z}, \{ \overline{Z}, \mathbf{A}_{h,R,\epsilon}^\delta \} \} \Big)  \\[0.2cm]
 & \quad + \frac{\vert Z \vert^2 \big( \{Z, \sigma \} \{ \overline{Z} , \mathbf{A}_{h,R,\epsilon}^\delta \} - \{ \overline{Z}, \sigma \} \{ Z , \mathbf{A}_{h,R,\epsilon}^\delta \} \big)}{2 ih \sigma^2}  \\[0.2cm]
  & \quad + \frac{ Z^2 \{ \overline{Z}, \sigma \} \{ \overline{Z}, \mathbf{A}_{h,R,\epsilon}^\delta \}}{2ih \sigma^2} - \frac{ \overline{Z}^2 \{ Z, \sigma \} \{ Z, \mathbf{A}_{h,R,\epsilon}^\delta \}}{2ih \sigma^2}.
\end{align*}
On the other hand, we have, from \eqref{e:abuse_notation_A},
\begin{align}
\label{e:action_Z}
\big \{ Z, \mathbf{A}_{h,R,\epsilon}^\delta \big \} & = X_Z \mathbf{A}_{h,R,\epsilon}^\delta + h \{ Z, \sigma \} \partial_\sigma \mathbf{A}_{h,R,\epsilon}^\delta, \\[0.2cm]
\label{e:action_Z_bar}
\big \{ \overline{Z} , \mathbf{A}_{h,R,\epsilon}^\delta \big \} & = X_{\overline{Z}} \mathbf{A}_{h,R,\epsilon}^\delta +h  \{ \overline{Z}, \sigma \} \partial_\sigma \mathbf{A}_{h,R,\epsilon}^\delta,
\end{align}
where 
$$
X_Z := \partial_x + ih^{1/2} \big( \partial_y + h^2 x^2 \partial_z \big), \quad X_{\overline{Z}} := \partial_x - ih^{1/2} \big( \partial_y + h^2 x^2 \partial_z \big).
$$
By \eqref{e:main_commutation_relations}, $\{ Z, \sigma \} = \{\overline{Z}, \sigma \} = - h \bm{\alpha}\, \sigma$. 
This, \eqref{e:action_Z}, \eqref{e:action_Z_bar}, and the definition \eqref{e:B} of $\mathbf{B}_{h,R,\epsilon}^\delta$, implies that 
$$
 \int_{\R^9} \mathbf{B}_{h,R,\epsilon}^\delta  \Psi_h(X') \overline{\Psi}_h(X) e^{i \Xi \cdot ( X - X')} \text{d}_h(X',\Xi,X) = \mathcal{O}\left( \frac{h}{\delta} \right),
$$
hence
$$
\lim_{R \to + \infty} \lim_{\delta \to 0} \lim_{\epsilon \to 0} \lim_{h \to 0}  \int_{\R^9} \mathbf{B}_{h,R,\epsilon}^\delta  \Psi_h(X') \overline{\Psi}_h(X) e^{i \Xi \cdot ( X - X')} \text{d}_h(X',\Xi,X) = 0.
$$
In other words, the limit of $I_{h,R,\epsilon,\delta}^2(a)$ does not change by the addition of the perturbation $\mathbf{B}_{h,R,\epsilon}^\delta$ to $\mathbf{A}_{h,R,\epsilon}^\delta$.  On the other hand:
\begin{align*}
\{ Z, \{ \overline{Z} , \mathbf{A}_{h,R,\epsilon}^\delta \} \} & \\[0.2cm]
 & \hspace*{-1.2cm}  = X_Z X_{\overline{Z}} \mathbf{A}_{h,R,\epsilon}^\delta - h^2 X_Z (  \bm{\alpha}\, \sigma \partial_\sigma \mathbf{A}_{h,R,\epsilon}^\delta )  - h^2  \bm{\alpha}\, \sigma \partial_\sigma (X_{\overline{Z}} \mathbf{A}_{h,R,\epsilon}^\delta) +  h^4  (\bm{\alpha}\, \sigma)^2 \partial_\sigma^2 \mathbf{A}_{h,R,\epsilon}^\delta, \\[0.2cm]
\{ \overline{Z}, \{ Z , \mathbf{A}_{h,R,\epsilon}^\delta \} \} & \\[0.2cm]
 & \hspace*{-1.2cm} = X_{\overline{Z}} X_Z \mathbf{A}_{h,R,\epsilon}^\delta - h^2 X_{\overline{Z}} (  \bm{\alpha}\, \sigma \partial_\sigma \mathbf{A}_{h,R,\epsilon}^\delta ) - h^2  \bm{\alpha}\, \sigma \partial_\sigma (X_{Z} \mathbf{A}_{h,R,\epsilon}^\delta) +  h^4  (\bm{\alpha}\, \sigma)^2 \partial_\sigma^2 \mathbf{A}_{h,R,\epsilon}^\delta, \\[0.2cm]
\{ Z, \{ Z , \mathbf{A}_{h,R,\epsilon}^\delta \} \} & \\[0.2cm]
 & \hspace*{-1.2cm}= X_Z X_Z \mathbf{A}_{h,R,\epsilon}^\delta - h^2 X_Z (  \bm{\alpha}\, \sigma \partial_\sigma \mathbf{A}_{h,R,\epsilon}^\delta )  - h^2  \bm{\alpha}\, \sigma \partial_\sigma (X_{Z} \mathbf{A}_{h,R,\epsilon}^\delta) +  h^4  (\bm{\alpha}\, \sigma)^2 \partial_\sigma^2 \mathbf{A}_{h,R,\epsilon}^\delta, \\[0.2cm]
\{ \overline{Z}, \{ \overline{Z} , \mathbf{A}_{h,R,\epsilon}^\delta \} \} & \\[0.2cm]
 & \hspace*{-1.2cm} = X_{\overline{Z}} X_{\overline{Z}} \mathbf{A}_{h,R,\epsilon}^\delta - h^2 X_{\overline{Z}} (  \bm{\alpha}\, \sigma \partial_\sigma \mathbf{A}_{h,R,\epsilon}^\delta )  - h^2  \bm{\alpha}\, \sigma \partial_\sigma (X_{\overline{Z}} \mathbf{A}_{h,R,\epsilon}^\delta) +  h^4  (\bm{\alpha}\, \sigma)^2 \partial_\sigma^2 \mathbf{A}_{h,R,\epsilon}^\delta.
\end{align*}
Moreover, we have:
\begin{align*}
X_Z  ( \bm{\alpha}\, \sigma) = X_{\overline{Z}} (  \bm{\alpha}\, \sigma)  =  \partial_x ( \bm{\alpha}\, \sigma).
\end{align*}
Thus, we obtain:
\begin{align*}
- \{ Z , \{ \overline{Z}, \mathbf{A}_{h,R,\epsilon}^\delta \} \} + \{ \overline{Z}, \{ Z , \mathbf{A}_{h,R,\epsilon}^\delta \} \}  & = \big( X_{\overline{Z}} X_Z - X_Z X_{\overline{Z}} \big) \mathbf{A}_{h,R,\epsilon}^\delta  \\[0.2cm]
 & = 2ih [\mathcal{X}_1, \mathcal{X}_2] \mathbf{A}_{h,R,\epsilon}^\delta \\[0.2cm]
 & = 2ih \mathcal{X}_3 \mathbf{A}_{h,R,\epsilon}^\delta,
\end{align*}
and similarly, 
\begin{align*}
- h \bm{\alpha} \, \sigma \big( \{ \overline{Z}, \mathbf{A}_{h,R,\epsilon}^\delta \} - \{ Z, \mathbf{A}_{h,R,\epsilon}^\delta \} \big) =  2hi \bm{\alpha} \mathcal{X}_2 \mathbf{A}_{h,R,\epsilon}^\delta.
\end{align*}
Therefore, using the change of variables \eqref{e:tricky_change} and taking limits we obtain:
\begin{align*}
\lim_{R \to + \infty} \lim_{\delta \to 0} \lim_{\epsilon \to 0} \lim_{h \to 0} \frac{1}{h^2} \int_{\R^9} \{\vert Z \vert^2 , \mathbf{A}_{h,R,\epsilon}^\delta + \mathbf{B}_{h,R,\epsilon}^\delta \} \Psi_h(X') \overline{\Psi}_h(X) e^{i \Xi \cdot ( X - X')} \text{d}_h(X',\Xi,X) & \\[0.2cm]
& \hspace*{-13cm} =  \operatorname{Tr} \int_{M_{x,y,z} \setminus \mathscr{S} \times \R_\sigma^*} \left( \frac{\widehat{G}_\sigma}{\sigma} \mathcal{Z} (a) + \left( \sigma D_w + iw \right)^2 \mathcal{Y}_1 (a) +  \left( \sigma D_w - iw \right)^2 \mathcal{Y}_1 (a) \right) dM_2,
\end{align*}
where $\mathcal{Y}_1$ and $\mathcal{Y}_2$ are two vector fields on $M_{x,y,z} \setminus \mathscr{S} \times \R_\sigma^*$. Finally, using the invariance property \eqref{e:invariance_2} and the fact that
\begin{align*}
\frac{1}{T_\sigma} \int_0^{T_\sigma} e^{-it \widehat{G}_\sigma} \left( \sigma D_w + i w \right)^2 e^{it \widehat{G}_\sigma} \, dt & = 0, \\[0.2cm]
\frac{1}{T_\sigma} \int_0^{T_\sigma} e^{-it \widehat{G}_\sigma} \left( \sigma D_w - i w \right)^2 e^{it \widehat{G}_\sigma} \, dt & = 0,
\end{align*}
where $T_\sigma = 2\pi/\sqrt{\vert \sigma \vert}$ is the period of the Hamiltonian flow $e^{-it \widehat{G}_\sigma}$, we get, by computing the trace with respect to the orthonormal basis $\{\phi_j \}_{j \in \mathbb{N}}$ of eigenvectors of $\widehat{G}_\sigma$, that for each $j \in \mathbb{N}$,
$$
 \frac{ \nu_j(\sigma)}{\vert \sigma \vert} \mathcal{Z} \mu_{2,j} = (2j + 1) \mathcal{Z}\mu_{2,j} = 0.
$$
\end{proof}

\begin{proof}[Proof of \eqref{e:drift_4}]

The proof is very similar to that of \eqref{e:drift_2}. Let us take first $a \in \mathcal{C}_c^\infty(M_{x,y,z} \times \R^2_{\sigma,\zeta})$ not depending on $(w,\xi)$, and define:
\begin{align}
\label{e:a_4}
\mathbf{a}_{h,R,\epsilon,\delta}^\rho =  \breve{\bm{\chi}}_\rho(\eta,\zeta) \chi\left( \frac{h\sigma}{\delta}  \right)\chi\left( \frac{h^2 \zeta}{\epsilon} \right) \breve{\chi} \left( \frac{\zeta}{R} \right) a\left( x,y,z, h\sigma, h^2 \zeta \right).
\end{align}
Consider next the Wigner equation
\begin{align*}
0 & = \big \langle [-h^2 \Delta_{\mathcal{M}}, \Op_h^{M}(\mathbf{a}_{h,R,\epsilon,\delta}^\rho) \psi_h, \psi_h \big \rangle_{L^2(M)} \\[0.2cm]
& = \frac{h}{i} \big \langle \Op_h^{M} ( \{ H_{\mathcal{M}}, \mathbf{a}_{h,R,\epsilon,\delta}^\rho \}) \psi_h, \psi_h \big \rangle_{L^2(M)} \\[0.2cm]
 & = \frac{h}{i} \int_{\R^9} \{ H_{\mathcal{M}}, \mathbf{a}_{h,R,\epsilon,\delta}^\rho \} \circ \bm{\kappa}_{h,\delta}\left( \frac{X+X'}{2} ,\Xi \right) \Psi_h(X') \overline{\Psi}_h(X) e^{i \Xi \cdot ( X - X')} \text{d}_h(X',\Xi,X) \\[0.2cm]
 & = \frac{h}{i} \int_{\R^9} \{ H_{\mathcal{M}} \circ \bm{\kappa}_{h,\delta} , \mathbf{A}_{h,R,\epsilon,\delta,\rho} \} \Psi_h(X') \overline{\Psi}_h(X) e^{i \Xi \cdot ( X - X')} \text{d}_h(X',\Xi,X),
\end{align*}
where recall that $\mathbf{A}_{h,R,\epsilon,\delta}^\rho = \mathbf{a}_{h,R,\epsilon,\delta}^\rho \circ \bm{\kappa}_{h,\delta}$ with respect to the change of variables \eqref{e:change_delta}. Set $\hbar_\delta := \delta^{-1/2}h$ and define:
\begin{align*}
w & = h^{1/2} \eta + (\hbar_\delta x)^2 h^{1/2} \zeta, \\[0.2cm]
\sigma & := 2 h^{1/2} \hbar_\delta x \zeta, \\[0.2cm]
\xi_\delta & := \delta^{1/2}\xi.
\end{align*}
Let us denote:
\begin{align}
\label{e:abuse_of_notation_A2}
\mathbf{A}_{h,R,\epsilon,\delta}^\rho (x,y,z,\sigma,\zeta) & \\[0.2cm]
 & \hspace*{-2.8cm} := \breve{\chi}\left( \frac{h^{1/2} \eta}{\rho} \right)  \chi\left( \frac{h\sigma}{\delta} \right)\chi\left( \frac{h^{2+1/2} \zeta}{\epsilon} \right) \breve{\chi} \left( \frac{h^{1/2} \zeta}{R} \right) a\left( \hbar_\delta x, h^{1/2}y,h^{1/2}z, h\sigma, h^{2+1/2} \zeta \right). \notag
\end{align}
Next observe that:
$$
\{  \xi_\delta , \sigma \} = 2h^{1+1/2} \zeta  = - h \, \bm{\alpha}(\hbar_\delta x) \sigma,
$$
where recall that $\bm{\alpha} = - 1/x$. Thus we have the commutator relations:
\begin{equation}
\label{e:main_commutation_relations_2}
\{ \xi_\delta , \sigma \} = - h \bm{\alpha} \, \sigma, \quad \{ \xi_\delta , w \} = h \sigma, \quad \{ w , \sigma \} = 0.
\end{equation}
Now pass to complex coordinates, defining:
$$
Z :=  \xi_\delta + i w, \quad \overline{Z} := \xi_\delta - iw.
$$
We have:
\begin{equation}
\label{e:Z_properties_2}
H_{\mathcal{M}} \circ \bm{\kappa}_{h,\delta} = \vert Z \vert^2 =  \xi_\delta^2 + w^2, \quad \{Z , \overline{Z} \} = 2 i \{ w, \xi_\delta \} = -2i h \, \sigma.
\end{equation}
Therefore, we can rewrite:
\begin{align*}
\{ \vert Z \vert^2 , \mathbf{A}_{h,R,\epsilon,\delta}^\rho \} & = Z \{ \overline{Z}, \mathbf{A}_{h,R,\epsilon,\delta}^\rho \} + \overline{Z} \{ Z , \mathbf{A}_{h,R,\epsilon,\delta}^\rho \}.
\end{align*}
We next deform $\mathbf{A}_{h,R,\epsilon,\delta}^\rho$ to improve its commuting properties with $H_{\mathcal{M}} \circ \bm{\kappa}_{h,\delta}$. To this aim, we introduce the perturbative term 
\begin{equation}
\label{e:B_2}
\mathbf{B}_{h,R,\epsilon,\delta}^\rho := \frac{- Z}{2i h \sigma} \{ \overline{Z}, \mathbf{A}_{h,R,\epsilon}^\delta \} + \frac{\overline{Z}}{2 i h \sigma} \{ Z , \mathbf{A}_{h,R,\epsilon}^\delta \}.
\end{equation}
On the other hand, we have, from \eqref{e:abuse_of_notation_A2},
\begin{align}
\label{e:action_Z_2}
\big \{ Z, \mathbf{A}_{h,R,\epsilon}^\delta \big \} & = X_Z \mathbf{A}_{h,R,\epsilon}^\delta + h \{ Z, \sigma \} \partial_\sigma \mathbf{A}_{h,R,\epsilon}^\delta, \\[0.2cm]
\label{e:action_Z_bar_2}
\big \{ \overline{Z} , \mathbf{A}_{h,R,\epsilon}^\delta \big \} & = X_{\overline{Z}} \mathbf{A}_{h,R,\epsilon}^\delta +h  \{ \overline{Z}, \sigma \} \partial_\sigma \mathbf{A}_{h,R,\epsilon}^\delta,
\end{align}
where 
$$
X_Z := \delta^{1/2} \partial_x + ih^{1/2} \big( \partial_y + V(hx) \partial_z \big), \quad X_{\overline{Z}} := \delta^{1/2} \partial_x - ih^{1/2} \big( \partial_y + V(hx) \partial_z \big).
$$
By \eqref{e:main_commutation_relations}, $\{ Z, \sigma \} = \{\overline{Z}, \sigma \} = - h \bm{\alpha}\, \sigma$. 
Moreover, by \eqref{e:x_away_from_crit}, we have:
$$
\vert \sigma \vert \gtrsim (\rho R)^{1/2}.
$$
This, \eqref{e:action_Z_2}, \eqref{e:action_Z_bar_2}, 
$$
 \int_{\R^9} \mathbf{B}_{h,R,\epsilon,\delta}^\rho  \Psi_h(X') \overline{\Psi}_h(X) e^{i \Xi \cdot ( X - X')} \text{d}_h(X',\Xi,X) = \mathcal{O}((\rho R)^{-1/2}) +  \mathcal{O}(h),
$$
hence
$$
\lim_{\rho \to ¡ \infty} \lim_{R \to + \infty} \lim_{\delta \to 0} \lim_{\epsilon \to 0} \lim_{h \to 0}  \int_{\R^9} \mathbf{B}_{h,R,\epsilon,\delta}^\rho  \Psi_h(X') \overline{\Psi}_h(X) e^{i \Xi \cdot ( X - X')} \text{d}_h(X',\Xi,X) = 0,
$$
that is, the limit of $I_{h,R,\epsilon,\delta}^2(a)$ does not change by the addition of the perturbation $\mathbf{B}_{h,R,\epsilon}^\delta$ to $\mathbf{A}_{h,R,\epsilon}^\delta$.  On the other hand, replacing $a (x,y,z,h\sigma,h^2\zeta)$ by $h\sigma \cdot a (x,y,z,h\sigma,h^2\zeta)$ in the definition of $\mathbf{a}_{h,R,\epsilon,\delta}^\rho$ given by \eqref{e:a_4} to compensate the denominator appearing in \eqref{e:B_2}, repeating the calculus of the proof of \eqref{e:drift_2}, taking limits we obtain in this case:
\begin{align*}
\lim_{\rho \to + \infty} \lim_{R \to + \infty} \lim_{\delta \to 0} \lim_{\epsilon \to 0} \lim_{h \to 0} \frac{1}{h^2} \int_{\R^9} \{\vert Z \vert^2 ,  \mathbf{A}_{h,R,\epsilon}^\delta + \mathbf{B}_{h,R,\epsilon}^\delta \} \Psi_h(X') \overline{\Psi}_h(X) e^{i \Xi \cdot ( X - X')} \text{d}_h(X',\Xi,X) & \\[0.2cm]
& \hspace*{-15cm} =  \int_{M_{x,y,z} \setminus \mathscr{S} \times \R_w \times \R_\xi} \left( G(w,\xi)\,  \mathcal{Z} (a) + \left(\xi + iw \right)^2 \mathcal{Y}^\infty_1 (a) +  \left( \xi - iw \right)^2 \mathcal{Y}^\infty_2 (a) \right) dm_4,
\end{align*}
where $\mathcal{Y}^\infty_1$ and $\mathcal{Y}^\infty_2$ are two vector fields on $M_{y,z,\zeta} \setminus \mathscr{S} \times \R_\sigma$. Finally, using the invariance property \eqref{e:invariance_4} and the fact that
\begin{align*}
\frac{1}{2\pi} \int_0^{2\pi}  \left( \xi + i w \right)^2 \circ \phi_t^{G} \, dt  = 
\frac{1}{2\pi} \int_0^{2\pi}  \left(\xi - i w \right)^2 \circ \phi_t^G \, dt  = 0,
\end{align*}
we get
$$
G(w,\xi) \, \mathcal{Z} m_4 \vert_{M} = 0.
$$
The proof of \eqref{e:drift_4} is then complete.
\end{proof}

\section{Weak drift invariance}
\label{s:weak_drift}

In this section we prove the additinal weak drift invariance properties of the measures $\mu_{1,k}$ estabilshed by Theorems \ref{t:more_two_microlocal} and \ref{t:weak_drift_invariance_1}, by a fruther two-microlocalization near the critical points of $\lambda_k(\eta,\zeta)$. 

\begin{proof}[Proof of Theorem \ref{t:more_two_microlocal}]
This proof is standard (see  \cite{An_Mac_Leau16}, \cite{AnantharamanMacia14}, \cite{An_Leau14}, \cite{Fer00}, \cite{Mac_Riv18}), and we only sketch it. Let us consider the family of symbols $a_\jmath \in \mathcal{C}^\infty(\T^2_{y,z} \times \mathcal{B}_\jmath  \times \R_v)$ being compactly supported in the variables $(y,z,\eta,\zeta)$ and homogeneus of degree zero at infinity in the variable $v$, that is, there exists $R_0 > 0$ and $a_\infty \in \mathcal{C}_c^\infty( \T^2_{y,z} \times \mathcal{B}_\jmath \times \mathbb{S}^0_\theta)$ such that:
$$
a(y,z,\eta,\zeta,v) = a_\infty \left(y,z,\eta,\zeta, \frac{v}{\vert v \vert} \right), \quad \text{if } \vert v \vert \geq R_0.
$$
For $R,\epsilon,d,r > 0$, and each $\jmath \in \{0,1\}$, we decompose the symbol $\mathbf{a}_{h,R,\epsilon}^\jmath$ defined by \eqref{e:symbol_for_new_two_microlocal} into 
$$
\mathbf{a}_{h,R,\epsilon}^\jmath =\mathbf{a}_{h,R,\epsilon}^{\jmath,d} + \mathbf{a}_{h,R,\epsilon,d}^{\jmath, r} +\mathbf{a}_{h,R,\epsilon,r}^\jmath,
$$
where we set:
\begin{align}
\label{e:cut_1}
\mathbf{a}_{h,R,\epsilon}^{\jmath,d} &:=\breve{\chi} \left( \frac{\eta - \eta_\jmath}{d} \right) \mathbf{a}_{h,R}^\jmath, \\[0.2cm]
\label{e:cut_2}
\mathbf{a}_{h,R,\epsilon,d}^{\jmath, r} &:=\chi \left( \frac{\eta - \eta_\jmath}{d} \right)  \breve{\chi}\left( \frac{ \eta - \eta_\jmath}{hr} \right)  \mathbf{a}_{h,R}^\jmath, \\[0.2cm]
\label{e:cut_3}
\mathbf{a}_{h,R,\epsilon,d,r}^\jmath & := \chi \left( \frac{\eta - \eta_\jmath}{d} \right)  \chi \left( \frac{\eta - \eta_\jmath }{hr} \right) \mathbf{a}_{h,R}^\jmath.
\end{align}
Then, we define respectively the following distributions acting on $a$ by
\begin{align*}
I^{1,\jmath}_{h,R,\epsilon,d}(a) & := \Big \langle \Op_h^\w ( \mathbf{a}_{h,R,\epsilon}^{\jmath,d}) \psi_h, \psi_h \Big \rangle_{L^2(M)}, \\[0.2cm]
I^{2,\jmath}_{h,R,\epsilon,d,r}(a) & := \Big \langle \Op_h^\w ( \mathbf{a}_{h,R,\epsilon,d}^{\jmath,r}) \psi_h, \psi_h \Big \rangle_{L^2(M)}, \\[0.2cm]
I^{3,\jmath}_{h,R,\epsilon,d,r}(a) & := \Big \langle \Op_h^\w ( \mathbf{a}_{h,R,\epsilon,d,r}^{\jmath}) \psi_h, \psi_h \Big \rangle_{L^2(M)}.
\end{align*}
The first one, $I^{1,\jmath}_{h,R,\epsilon,d}(a)$ can be bounded using Calderón-Vaillancourt theorem. Revisiting the proof of Proposition \ref{l:measure_m_1}, we obtain, modulo subsequences, that:
\begin{align*}
\lim_{d \to 0}  \lim_{R \to +\infty} \lim_{\epsilon \to 0} \lim_{h \to 0} I^{1,\jmath}_{h,R,\epsilon,d}(a) & \\[0.2cm]
 & \hspace*{-2cm} =  \int_{\T^2_{y,z} \times \R_\eta \times \R^*_\zeta} \mathbf{1}_{\eta \neq \eta_\jmath} a_\infty \left( y,z,\eta,\zeta, \frac{\eta}{\vert \eta \vert} \right) d\mu_{1,\mathbf{k}}.
\end{align*}
Similarly, for $I^{2,\jmath}_{h,R,\epsilon,d,r}(a)$ using \eqref{e:concentration_1}, one gets the existence of $\bm{\nu}^\jmath_{1,\mathbf{k}} \in \mathcal{M}_+(\T^2_{y,z} \times \mathbb{S}^1_\theta)$ such that
\begin{align*}
\lim_{d \to 0} \lim_{r \to +\infty} \lim_{R \to + \infty} \lim_{\epsilon \to 0} \lim_{h \to 0}  I^{2,\jmath}_{h,R,\epsilon,d,r}(a) =  \int_{\T^2_{y,z} \times \mathbb{S}^0_\theta} a_\infty\left( y,z,\eta_\jmath, \zeta_\jmath, \theta \right) d\bm{\nu}_{1,\mathbf{k}}^{\jmath}.
\end{align*}
Finally, for $I^{3,\jmath}_{h,R,\epsilon,d,r}(a)$, defining the cut-off
$$
\bm{\chi}_{h,R,\epsilon,r,d}(\eta,\zeta): =  \chi \left( \frac{\eta - \eta_\jmath}{d} \right)  \chi \left( \frac{ \eta - \eta_\jmath}{hr} \right) \breve{\chi}\left( \frac{h^2 \zeta}{\epsilon} \right) \breve{\chi}\left( \frac{ \zeta}{R} \right),
$$ 
we have:
\begin{align*}
I^{3,\jmath}_{h,R,\epsilon,d,r}(a) & \\[0.2cm]
 & \hspace*{-1.5cm} = \int_{\R^6} \bm{\chi}_{h,R,\epsilon,r,d}(h\eta,h\zeta) a\left( \frac{Y + Y'}{2}, h\eta, h^3\zeta, \frac{h\eta - \eta_\jmath}{h} \right)  \\[0.2cm]
 & \hspace*{5cm} \times  \psi_h(x,Y')\overline{\psi}_h(x,Y) e^{i \Theta \cdot (Y - Y')} \text{d}(x,Y',\Theta,Y) \\[0.2cm]
 & \hspace*{-1.5cm} = \int_{\R^6} \bm{\chi}_{h,R,\epsilon,r,d}\left(\eta_\jmath + h\eta, h\zeta \right) a\left( \frac{Y + Y'}{2},\eta_\jmath + h\eta, h^3\zeta, \eta\right) \\[0.2cm]
 & \hspace*{5cm} \times  \psi_h(x,Y')\overline{\psi}_h(x,Y)  e^{i \Theta \cdot (Y - Y')} \text{d}(x,Y',\Theta,Y)
\end{align*}
Using the fact that
$$
a\left( \frac{Y + Y'}{2},\eta_\jmath + h\eta,  h^3\zeta, \eta \right) = a\left(  \frac{Y + Y'}{2},\eta_\jmath, h^3 \zeta, \eta \right) + O(h),
$$
and again the concentration property \eqref{e:concentration_1}, one obtains the existence of a positive Radon measure $\mathbf{M}^\jmath_{1,\mathbf{k}} \in \mathcal{M}_+(\T^1_z \times \R^*_\zeta ; \mathcal{L}^1(L^2(\T_y)))$ such that
\begin{align*}
\lim_{d \to 0} \lim_{r \to +\infty} \lim_{R \to + \infty} \lim_{\epsilon \to 0} \lim_{h \to 0 } I^{3,\jmath}_{h,R,\epsilon,d,r}(a) =  \int_{\T^1_z } \operatorname{Tr} \Big( \Op_{(y,\eta)}^{\T} \left( a\left(y,z,\eta_\jmath, \zeta_\jmath, \eta \right) \right) d \mathbf{M}_{1,\mathbf{k}}^{\jmath} \Big).
\end{align*}
This concludes the proof.
\end{proof}

\begin{proof}[Proof of Theorem \ref{t:weak_drift_invariance_1}]
For each $(\eta, \zeta) \in \R \times \R^*$, we consider the semiclassical operator
$$
\widehat{H}_h(\eta,\zeta) := \Op_h^{\R}\big( \xi^2 + (\eta + x^2 \zeta)^2 \big) = h^2 D_\xi^2 + (\eta + x^2 \zeta)^2.
$$
Let $w(x,\eta,\zeta) = \eta + x^2\zeta$ be defined as multiplication operator on $L^2(\R_x)$, we take the average of $w$ by the quantum flow $e^{-it \widehat{H}_h(\eta,\zeta)}$ as the formal limit:
$$
\langle w \rangle_h := \lim_{T \to + \infty} \frac{1}{T} \int_0^T  e^{it \widehat{H}_h(\eta,\zeta)} \, w(x,\eta,\zeta) \, e^{-it \widehat{H}_h(\eta,\zeta)} dt.
$$
We do not give for the moment a more precise sense to the operator $\langle w \rangle_h$, but it suffices to remark that it is a diagonal operator with respect to the eigenbasis of $\widehat{H}_h(\eta,\zeta)$, which let us recall that, by Proposition \ref{p:simple_montgomery_martinet}, has discrete spectrum with simple eigenvalues. Indeed one can exactly calculate its matrix elements in this basis (see \eqref{e:matrix_element_w} below). Let us also introduce the operator:
\begin{equation}
\label{e:solution_cohomological}
\widehat{\mathcal{G}}_h(\eta,\zeta) := \lim_{T \to +\infty} \frac{1}{T} \int_0^T \int_0^t e^{is \widehat{H}_h(\eta,\zeta)} \big( w - \langle w \rangle_h \big) e^{-is \widehat{H}_h(\eta,\zeta)} ds \, dt.
\end{equation}
This operator appears naturally in normal form constructions based on the averaging method \cite{Uribe85}. Notice that $\widehat{\mathcal{G}}_h$ solves the cohomological equation
\begin{equation}
\label{e:semiclassic_cohomological_equation}
[\widehat{H}_h, \widehat{\mathcal{G}}_h ] = \frac{1}{i} \left(  \langle w  \rangle_h - w \right).
\end{equation}
Now, given $\tau > 0$, we define the localization operator
$$
\widehat{\mathcal{E}}_{h,\tau}(\eta,\zeta) := \chi \left( \frac{  \widehat{H}_h(\eta,\zeta) - 1 }{\tau} \right).
$$
Let $[\cdot, \cdot]^\perp$ denote the anticommutator\footnote{The anticommutator of two operators is defined by $[A,B]^\perp := AB + BA$.}. By \eqref{e:semiclassic_cohomological_equation} and since the operator $\widehat{\mathcal{E}}_{h,\tau}$ is a function of $\widehat{H}_h$ (hence $[\widehat{H}_h, \mathcal{E}_{h,\tau}] = 0$), the operator $\widehat{\mathcal{G}}_{h,\tau} := [ \widehat{\mathcal{G}}_h , \widehat{\mathcal{E}}_{h,\tau}]^\perp$ solves the cohomological equation
\begin{equation}
\label{e:cohomological_non_commuting}
[\widehat{H}_h, \widehat{\mathcal{G}}_{h,\tau}] = \frac{1}{i} [\langle w \rangle_h - w , \widehat{\mathcal{E}}_{h,\tau}]^\perp.
\end{equation}
Let us denote
$$
\bm{\chi}_{h,R,\epsilon}(\eta,\zeta) := \breve{\chi} \left( \frac{h^2 \zeta}{\epsilon} \right) \breve{\chi} \left( \frac{\zeta}{R} \right),
$$
and define the operator valued symbol (depending on the scale $\tau$):
$$
\mathbf{a}_{h,R}^\epsilon(y,z,\eta,\zeta) = \bm{\chi}_{h,R,\epsilon}(\eta,\zeta) \Big(  \widehat{\mathcal{E}}_{h,\tau}(\eta,\zeta) + h \widehat{\mathcal{G}}_{h,\tau}(\eta,\zeta)  \partial_y \Big) a\left( y,z,\eta,h^2\zeta, \frac{\eta - \eta_\jmath}{h} \right).
$$
By the semiclassical pseudodifferential calculus and using the cohomological equation \eqref{e:cohomological_non_commuting}, we have the following commutator formula:
\begin{align*}
 \big[ h^2 \Delta_{\mathcal{M}}, \Op_h^{\T^2} \big( \mathbf{a}_{h,R}^\epsilon \big) \big] & \\[0.2cm]
 & \hspace*{-2cm}  = \Op_h^{\T^2} \big( [\widehat{H}_h, \mathbf{a}_{h,R}^\epsilon] \big) + \frac{h}{2i} \Op_h^{\T^2} \big( [ \partial_\eta \widehat{H}_h , \partial_y \mathbf{a}_{h,R}^\epsilon]^\perp \big) + \mathcal{O}(h^3) \\[0.2cm]
 & \hspace*{-2cm} = h \Op_h^{\T^2} \big( \bm{\chi}_{h,R,\epsilon} \, [\widehat{H}_h, \widehat{\mathcal{G}}_{h,\tau} ]  \partial_y a \big)  \\[0.2cm]
 & \hspace*{-2cm} \quad + \frac{h}{i} \Op_h^{\T^2} \big( \bm{\chi}_{h,R,\epsilon} \, [ w,  \widehat{\mathcal{E}}_{h,\tau}]^\perp \, \partial_y a \big) \\[0.2cm]
 & \hspace*{-2cm} \quad + \frac{h^2}{i} \Op_h^{\T^2} \big( \bm{\chi}_{h,R,\epsilon} \, [ w, \widehat{\mathcal{G}}_{h,\tau}]^\perp \partial_y^2 a \big) + \mathcal{O}(h^3) \\[0.2cm]
 & \hspace*{-2cm} =  \frac{h}{i} \Op_h^{\T^2} \big(  \bm{\chi}_{h,R,\epsilon} \,  [ \langle w \rangle_h,  \widehat{\mathcal{E}}_{h,\tau}]^\perp \partial_y a \big) \\[0.2cm]
 & \hspace*{-2cm} \quad + \frac{h^2}{i} \Op_h^{\T^2} \big(\bm{\chi}_{h,R,\epsilon} \, [ w, \widehat{\mathcal{G}}_{h,\tau}]^\perp \partial_y^2 a \big) + \mathcal{O}(h^3).
\end{align*}
Next, as done before in \eqref{e:Wigner_1}, we consider again the Wigner equation
$$
0 = \Big \langle \big[ -h^2 \Delta_{\mathcal{M}}, \Op_h^{\T^2} \big( \mathbf{a}_{h,R}^\epsilon \big) \big] \psi_h, \psi_h \Big \rangle_{L^2(M)}. 
$$
In view of \eqref{e:commutator_in_trace_1}, denoting $\widehat{\mathcal{G}}_\tau := \widehat{\mathcal{G}}_{1,\tau}$ and $\langle \cdot \rangle := \langle \cdot \rangle_1$, we have the trace formula:
\begin{align*}
\mathcal{O}(h^3) & = \frac{h}{i}  \int_{\R^6}  \operatorname{Tr}_{L^2(\R_x)} \Big[ [\langle w \rangle, \widehat{\mathcal{E}}_\tau]^\perp (\Theta_h) \mathcal{T}_\epsilon(\Theta_h)  \mathcal{K}_h(Y,Y') \Big]  \partial_y a\left( \frac{Y + Y'}{2},h\Theta, \frac{h\eta - \eta_\jmath}{h} \right)  \\[0.2cm]
 & \hspace*{10cm} \times e^{i \Theta \cdot (Y-Y')} \text{d}(Y',\Theta,Y) \\[0.2cm]
&   \quad + \frac{h^2}{i} \int_{\R^6} \operatorname{Tr}_{L^2(\R_x)} \Big[ [ w, \widehat{\mathcal{G}}_\tau]^\perp(\Theta_h) \mathcal{T}_\epsilon(\Theta_h)  \mathcal{K}_h(Y,Y') \Big] \partial_y^2 a\left( \frac{Y + Y'}{2},h\Theta, \frac{h\eta - \eta_\jmath}{h} \right) \\[0.2cm]
& \hspace*{10cm} \times  e^{i \Theta \cdot (Y - Y')} \text{d}(Y',\Theta,Y),
\end{align*}
where we now (re)define $\Theta_h = (h\eta, h^3 \zeta)$ and  $\mathcal{K}_h(Y,Y')$ as the operator with integral kernel given by the tensor product
$$
\psi_h \otimes \psi_h(x',Y',x,Y) = \psi_h(x',Y') \overline{\psi}_h(x,Y).
$$ 
We next calculate the trace
\begin{align}
\label{e:trace_1}
 \operatorname{Tr}_{L^2(\R_x)} \Big[ [\langle w \rangle, \widehat{\mathcal{E}}_\tau]^\perp(\Theta_h)  \mathcal{K}_h(Y,Y') \Big] & =  2 \sum_{k \in \mathbb{N}} \partial_\eta \lambda_k(\Theta_h) \chi\left( \frac{\lambda_k(\Theta_h) - 1}{\tau} \right)  \kappa_k^h(Y,Y'),
\end{align}
where $\kappa_k^h(Y,Y') = \langle \mathcal{K}_h(Y,Y') \varphi_k, \varphi_k \rangle_{L^2(\R_x)}$. On the other hand, we observe that $\big \langle [w, \mathcal{G}_\tau]^\perp \big \rangle = 0$. Indeed, deriving equation
$$
( \widehat{H}_{\eta,\zeta} - \lambda_k ) \varphi_k = 0
$$
with respect to $\eta$, we find
\begin{align}
\label{e:matrix_element_w}
2\langle w \rangle \varphi_k = \partial_\eta \lambda_k \varphi_k, \quad 2 ( w - \langle w \rangle) \, \varphi_k =  (\lambda_k - \widehat{H}_{\eta,\zeta} ) \partial_\eta \varphi_k.
\end{align}
Moreover  $\partial_\eta \varphi_k$ and $(\widehat{H}_{\eta,\zeta} - \lambda_k) \partial_\eta \varphi_k$ belong to the Schwartz class $\mathscr{S}(\R)$ (this is consequence of Agmon estimates (see \cite[Prop. 3.3.4]{Helffer88} and  \cite[\S 3]{CdV_Letrouit22}). Besides that, we have
\begin{align}
 \widehat{\mathcal{G}}_\tau \, \varphi_k & = \frac{1}{i(\lambda_k - \widehat{H}_{\eta,\zeta})} \left( \chi \left( \frac{\lambda_k - 1}{\tau} \right) + \chi\left( \frac{\widehat{H}_{\eta,\zeta} - 1}{\tau} \right) \right) \, w \, \varphi_k \notag \\[0.2cm]
 \label{e:matrix_elements_G}
  & = \frac{1}{i} \left( \chi \left( \frac{\lambda_k - 1}{\tau} \right) + \chi\left( \frac{\widehat{H}_{\eta,\zeta} - 1}{\tau} \right) \right) \partial_\eta \varphi_k.
\end{align}
Thus, combining \eqref{e:matrix_element_w} and \eqref{e:matrix_elements_G}, we get, for each $k \in \mathbb{N}$: 
$$
\big \langle [w, \mathcal{G}_\tau]^\perp \varphi_k, \varphi_k \big \rangle_{L^2(\R_x)} = 0
$$
and hence $\langle  [w, \mathcal{G}_\tau]^\perp \rangle = 0$. We next observe that the localization by the cut-off
$$
\chi \left( \frac{ h\eta - \eta_\jmath}{d} \right) \chi\left( \frac{\lambda_k(\Theta_h) - 1}{\tau} \right)
$$
implies, for $\tau > 0$ sufficiently small, that the only non-vanishing term coming from the sum in \eqref{e:trace_1} is
$$
2\partial_\eta \lambda_{\mathbf{k}}(\Theta_h) \chi\left( \frac{\lambda_{\mathbf{k}}(\Theta_h )- 1}{\tau} \right) \kappa_{\mathbf{k}}^h(Y,Y').
$$

We now proceed to the proof of \eqref{e:weak_1}.  Consider the localization \eqref{e:cut_2} and set $\tau = dhr$. Using Taylor expansion and the fact that $\partial_\eta \lambda_{\mathbf{k}}(\Theta_\jmath) = 0$, we have:
$$
\partial_\eta \lambda_{\mathbf{k}}(\Theta_h) = (h\eta - \eta_\jmath) \partial^2_\eta \lambda_{\mathbf{k}}(\Theta_\jmath) + \mathcal{O}(dhr) + \mathcal{O}(\vert h\eta - \eta_\jmath \vert^2).
$$
Thus, replacing $a$ by $\frac{a}{\eta - \eta_\jmath}$, and taking limits through succesive subsequences, we get:
$$
0 = \int_{\T^2_{y,z} \times \mathbb{S}^0_\theta } \partial_\eta^2 \lambda_{\mathbf{k}}(\eta_\jmath,\zeta_\jmath) \partial_y a_\infty (y,z,\eta_\jmath,\zeta_\jmath,\theta) d\bm{\nu}^\jmath_{1,\mathbf{k}}.
$$
We conclude the proof of \eqref{e:weak_1} by using that
\begin{equation}
\label{e:non_degenerate_critical_value}
\partial_\eta^2 \lambda_{\mathbf{k}}(\eta_\jmath,\zeta_\jmath) = \Lambda_{\mathbf{k}}''( \mu_*) \neq 0.
\end{equation}

Finally, we show \eqref{e:weak_2}. We consider in this case the cut-off \eqref{e:cut_3} and set $\tau = (hr)^2$. Then by Taylor expansion we have:
$$
\partial_\eta \lambda_{\mathbf{k}}(\Theta_h) = (h\eta - \eta_\jmath) \partial^2_\eta \lambda_{\mathbf{k}}(\Theta_\jmath) + \mathcal{O}((hr)^2) + \mathcal{O}(\vert h \eta - \eta_\jmath \vert^2).
$$
Then, taking limits through succesive subsequences, we get:
$$
0 = \partial_\eta^2 \lambda_{\mathbf{k}}(\eta_\jmath, \zeta_\jmath) [D_y^2, M_{1,\mathbf{k}}^\jmath].
$$
By using again \eqref{e:non_degenerate_critical_value}, we obtain \eqref{e:weak_2}.
\end{proof}

\appendix

\section{Spectral properties of $\Delta_{\mathcal{M}}$}
\label{a:spectral_properties}
In this appendix, we give the proof that the operator $\Delta_{\mathcal{M}}$ defined with domain
$$
\mathcal{D}(\Delta_{\mathcal{M}}) = \{ \psi \in L^2_0(M) \, : \, \Delta_{\mathcal{M}} \psi \in L_0^2(M) \}
$$ 
is self-adjoint and has compact resolvent. Notice first that, taking the Fourier transform $\mathcal{F}_{y,z}$ in the variables $(y,z)$, we can decompose the space $L^2_0(M)$ into the direct sum
$$
\mathcal{F}_{y,z} : L^2_0(M) \to \bigoplus_{n \in \mathbb{Z} \times \mathbb{Z}^*} \mathcal{L}_n^2(\R), \quad \mathcal{L}_n^2(\R) := \big \{ \varphi(x) \, \mathbf{e}_n(y,z) \, : \, \varphi \in L^2(\R) \big \},
$$
where  $\mathbf{e}_n(y,z) =e^{i n \cdot(y,z)}/2\pi$. Then the operator $\Delta_\mathcal{M}$ conjugates into a discrete superposition of operators
\begin{equation}
\label{e:unitary_martinet}
\mathcal{F}_{y,z} \, \Delta_\mathcal{M} \, \mathcal{F}_{y,z}^{-1} = \bigoplus_{n \in \mathbb{Z}^2} \mathcal{P}_n,
\end{equation}
where, for each $n = (n_1,n_2) \in \mathbb{Z} \times \mathbb{Z}^*$, the operator $\mathcal{P}_n$ is the quartic oscillator defined on the Hilbert space $\mathcal{L}_n^2(\R) \simeq L^2(\R)$ by
\begin{equation}
\label{e:Montgomery}
\mathcal{P}_k = - \partial_x^2 + ( n_1 + x^2 n_2)^2.
\end{equation}
It is well-known (see for instance \cite[Prop. B.1]{Bahouri23}) that for each $n \in \mathbb{Z} \times \mathbb{Z}^*$,  the quartic oscillator $\mathcal{P}_n$ with domain $\mathcal{D}(\mathcal{P}_n) = \{ \varphi \in L^2(\R) \, : \, \mathcal{P}_n \, \varphi \in L^2(\R) \}$ is self-adjoint and has compact resolvent. Then by \eqref{e:unitary_martinet} and \eqref{e:Montgomery} the operator $\Delta_\mathcal{M}$ is also self-adjoint. Moreover, the spectrum of $\Delta_\mathcal{M}$ satisfies
\begin{align*}
\operatorname{Sp}_{L_0^2(M)} (\Delta_{\mathcal{M}}) & = \bigcup_{n \in \mathbb{Z} \times \mathbb{Z}^*} \operatorname{Sp}_{\mathcal{L}_n^2(\R)} (\mathcal{P}_n).
\end{align*}
Finally, by \eqref{e:Martinet_Montgomery}, the eigenvalues $\lambda_k(n)$ of $\mathcal{P}_n$ are given by
$$
\lambda_k(n) = \vert n_1 \vert^{1/3} \Lambda_k\left( \frac{ n_2 \sgn(n_1)}{\vert n_1 \vert^{2/3}} \right),
$$
where the $\Lambda_k$ are the eigenvalues of the Montgomery family of operators \eqref{e:Montgomery_family}. Then, using the fact $0 < \lambda_k(\mu)$ for every $\mu \in \R$ and that $\lim_{\mu \to \pm\infty} \Lambda_k(\mu) = +\infty$ (see \cite{Helffer_Leautaud22}), we get:
$$
\lim_{\vert n \vert \to + \infty} \vert \lambda_k(n) \vert = + \infty.
$$ 

\section{Semiclassical quantization}
\label{a:appendix}

In this appendix we review some basic results on semiclassical pseudodifferential calculus. We denote by $(X,\Xi) \in T^*M$ the phase-space variables. Let us consider a symbol 
$$
\mathbf{a} \in \mathcal{C}^\infty( \R^3_{X} \times \R^3_\Xi \times \R^3_{X'}).
$$ 
Assume that $\mathbf{a}$ is $2\pi$-periodic in the variable $X$, and it is bounded together with all its derivatives in $\Xi$, and define the semiclassical quantization $\Op_h(\mathbf{a})$ by
\begin{align*}
\Op^{M}_h(\mathbf{a})\psi(X) & :=  \int_{\R^3 \times \R^3} \mathbf{a}(X,h\Xi,X') \psi(X') e^{i \Xi \cdot(X - X' )} \text{d}(X',\Xi),
\end{align*}
for any $\psi \in \mathscr{S}(M)$ in the Schwartz class, where the integral is taken with respect to the measure \eqref{e:measure}.  We can write, for any $\psi, \varphi \in \mathscr{S}(M)$:
$$
\big \langle \Op_h^M(\mathbf{a})\psi , \varphi \big \rangle_{L^2(M)} =  \int_{\R^3 \times \R^3 \times \R^3} \mathbf{a}(X,h\Xi,X') \psi(X') \overline{\varphi}(X) e^{i \Xi \cdot(X - X' )} \text{d}(X',\Xi,X),
$$
with respect to the measure \eqref{e:measure_2}.

\begin{definition}
\label{d:discret_derivatives}
Let $h > 0$ and let $\Xi \in \R \times h^{1/2} \mathbb{Z}^2$. Given $u : \R \times h^{1/2} \mathbb{Z}^2 \to \mathbb{C}$. We define:
\begin{align*}
\partial_\eta^h  u (\Xi) & : = \frac{u(\Xi + h^{1/2} e_\eta) - u(\Xi)}{h^{1/2}}, \quad \overline{\partial}^h_\eta  u(\Xi) : = \frac{u(\Xi) - u(\Xi - h^{1/2} e_\eta)}{h^{1/2}}, \\[0.2cm]
\partial_\zeta^h  u (\Xi) & : = \frac{u(\Xi + h^{1/2} e_\zeta) - u(\Xi)}{h^{1/2}}, \quad \overline{\partial}^h_\zeta  u(\Xi) : = \frac{u(\Xi) - u(\Xi - h^{1/2} e_\zeta)}{h^{1/2}}.
\end{align*}
Here, $e_\eta = (0,1,0) \in \R^3$ and $e_\zeta = (0,0,1) \in \R^3$ denote respcetively the second and third vectors of the canonical basis of $\R^3$.
\end{definition}

 The following is the standard discrete integration by parts Lemma obtained from Abel's transformation.

\begin{lemma} 
\label{l:ipp}
Let $u,v : \R \times h^{1/2} \mathbb{Z}^2 \to \mathbb{C}$. Then:
\begin{align*}
\sum_{\Xi \in \R \times h^{1/2} \mathbb{Z}^2}  \Big( u(\Xi) \partial^h_\eta v(\Xi)  -  \overline{\partial}_\eta^h u(\Xi) v(\Xi) \Big) & = 0, \\[0.2cm]
\sum_{\Xi \in \R \times h^{1/2} \mathbb{Z}^2}  \Big( u(\Xi) \partial^h_\zeta v(\Xi)  - \overline{\partial}_\zeta^h u(\Xi) v(\Xi) \Big) & = 0.
\end{align*}
\end{lemma}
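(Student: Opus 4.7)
The plan is to prove both identities by a discrete analogue of integration by parts, namely Abel's transformation applied to the shift structure built into $\partial_\eta^h$ and $\overline{\partial}_\eta^h$. The two identities are structurally identical (only the direction of the unit vector, $e_\eta$ versus $e_\zeta$, changes), so it suffices to prove the first; the second follows by swapping the roles of $\eta$ and $\zeta$ throughout. Since the sum is over $\R \times h^{1/2}\mathbb{Z}^2$ where the discrete derivatives act only on the $h^{1/2}\mathbb{Z}^2$ slots, the continuous $\xi$-variable is a passive parameter and plays no role in the combinatorics.

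First I would unfold the definition of $\partial_\eta^h$ to split
\[
\sum_\Xi u(\Xi)\,\partial_\eta^h v(\Xi) \;=\; \frac{1}{h^{1/2}}\sum_\Xi u(\Xi)\,v(\Xi + h^{1/2}e_\eta) \;-\; \frac{1}{h^{1/2}}\sum_\Xi u(\Xi)\,v(\Xi),
\]
and then perform the change of summation index $\Xi \mapsto \Xi - h^{1/2}e_\eta$ in the first of the two pieces. Because the summation set $\R \times h^{1/2}\mathbb{Z}^2$ is invariant under the translation $\Xi \mapsto \Xi + h^{1/2}e_\eta$ (the shift affects only a discrete slot), this reindexing is licit and turns the first term into $h^{-1/2}\sum_\Xi u(\Xi - h^{1/2}e_\eta)\,v(\Xi)$. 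Factoring $v(\Xi)$ out of the combined expression then reassembles the difference quotient of $u$ into the backward form $\overline{\partial}_\eta^h u(\Xi)$, producing the claimed identity. The $\zeta$-identity is obtained verbatim with $e_\eta$ replaced by $e_\zeta$.

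The only real subtlety is to justify the reindexing, since a priori the two partial sums above are only formally defined. I would therefore first restrict to, say, finitely supported $u$ (or $v$) in the discrete variable, for which each individual sum becomes a finite linear combination of integrals in $\xi$ and the shift is trivially valid, and then note that the resulting identity extends to any pair $(u,v)$ for which both $u\,\partial_\eta^h v$ and $\overline{\partial}_\eta^h u\cdot v$ are absolutely summable/integrable. This is precisely the standing hypothesis in every application of Lemma \ref{l:ipp} within the paper, where the factors $\Gamma_h$ and $\Gamma$ guarantee enough decay in the discrete variables to make the manipulation rigorous. This is the only obstacle, and it is essentially bookkeeping; there are no analytic difficulties beyond Abel summation.
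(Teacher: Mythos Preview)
Your proposal is correct and follows exactly the approach the paper indicates: the paper does not give a detailed proof but simply states that the lemma is ``the standard discrete integration by parts Lemma obtained from Abel's transformation,'' which is precisely the index-shift argument you wrote out. Your remark on the summability hypothesis is a sensible addition, since the paper leaves this implicit.
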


We next show an elementary Lemma which we will use in the proof of Calderón-Vaillancourt theorem, as given in \cite{Hwang87}.
\begin{lemma}{\cite[Lemma 3.1]{Hwang87}}
\label{l:Hwang}
Let $\Gamma \in L^2(\R^3; \operatorname{d})$ and $\Psi \in L^2(M)$. Define:
$$
\mathbf{h}(X,\Xi) := \int_{\R^3} e^{i X \cdot \Lambda} \Gamma(\Lambda - \Xi) \widehat{\Psi}(\Lambda) \operatorname{d}(\Lambda), \quad (X,\Xi) \in M \times \R^3,
$$
where, denoting $\Lambda = (\lambda_1, \lambda_2,\lambda_3)$,
$$
\widehat{\Psi}(\Lambda) := \frac{1}{(2\pi)^{3/2}} \int_M \Psi(X) e^{-iX \cdot \Lambda} dX, \quad \operatorname{d}(\Lambda) := d\lambda_1 \otimes  \left(\sum_{k \in \mathbb{Z}^2} \delta\big( (\lambda_2, \lambda_3) - k \big) \right).
$$
Then:
$$
\Vert \mathbf{h} \Vert_{L^2(M \times \R^3; \operatorname{d})} = \Vert \Gamma \Vert_{L^2(\R^3 ; \operatorname{d})} \Vert \Psi \Vert_{L^2(M)},
$$
where $L^2(M \times \R^3; \operatorname{d})$  is considered with respect to the measure $\operatorname{d}$ defined by \eqref{e:measure}.
\end{lemma}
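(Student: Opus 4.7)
The plan is to deduce this identity from the Plancherel theorem on $M = \R \times \T^2$ together with the translation invariance of the measure $\operatorname{d}$ on $\R \times \Z^2$. The first step is to recognize that, for each fixed $\Xi$, the function $\mathbf{h}(\,\cdot\,, \Xi)$ is (up to the normalization constant built into the definition of $\widehat{\Psi}$) the inverse Fourier transform on $M$ of
$$
F_\Xi(\Lambda) := \Gamma(\Lambda - \Xi)\,\widehat{\Psi}(\Lambda).
$$
Plancherel's identity, in the form $\|\Phi\|_{L^2(M;\,\operatorname{d})}^2 = \|\widehat{\Phi}\|_{L^2(\R^3;\,\operatorname{d})}^2$ dictated by the chosen normalizations, then yields
$$
\int_M |\mathbf{h}(X,\Xi)|^2\,\operatorname{d}(X) \;=\; \int_{\R^3} |\Gamma(\Lambda - \Xi)|^2\,|\widehat{\Psi}(\Lambda)|^2\,\operatorname{d}(\Lambda),
$$
where the factor $(2\pi)^{-2}$ present in $\operatorname{d}(X)$ from \eqref{e:measure} is precisely what cancels the $(2\pi)^{3/2}$ constant appearing in the definition of $\widehat{\Psi}$, making this a genuine isometry.

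Next, I would integrate this identity against $\operatorname{d}(\Xi)$ and apply Fubini to obtain
$$
\|\mathbf{h}\|_{L^2(M\times\R^3;\,\operatorname{d})}^2 \;=\; \int_{\R^3} |\widehat{\Psi}(\Lambda)|^2 \left(\int_{\R^3}|\Gamma(\Lambda-\Xi)|^2\,\operatorname{d}(\Xi)\right) \operatorname{d}(\Lambda).
$$
The decisive observation is then that the measure $\operatorname{d}(\Xi) = d\xi \otimes \sum_{k\in\Z^2}\delta(\,\cdot\, - k)$ is invariant under the shift $\Xi\mapsto \Lambda - \Xi$ whenever $\Lambda \in \R\times\Z^2$, because $\Z^2$ is a group and $d\xi$ is translation-invariant on $\R$. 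Consequently the inner integral equals $\|\Gamma\|_{L^2(\R^3;\,\operatorname{d})}^2$ independently of $\Lambda$, and can be pulled out of the integral.

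The remaining outer integral $\int_{\R^3} |\widehat{\Psi}|^2 \operatorname{d}(\Lambda)$ equals $\|\Psi\|_{L^2(M)}^2$ by a second application of Plancherel, and taking square roots delivers the stated identity. The main point requiring care throughout is the careful bookkeeping of normalization constants — the factor $(2\pi)^{-2}$ on the $\T^2$ part of $\operatorname{d}$, the $(2\pi)^{-3/2}$ in the definition of $\widehat{\Psi}$, and the counting measure on $\Z^2$ on the Fourier side — so that the two applications of Plancherel are genuine isometries and no spurious constants survive. Once these normalizations are aligned, the argument is otherwise a routine chain of Plancherel, Fubini, and the translation invariance of $\operatorname{d}$.
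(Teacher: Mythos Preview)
Your proof is correct and follows essentially the same mechanism as the paper's: Plancherel, Fubini, and translation invariance applied to the convolution structure in $\mathbf{h}$. The only cosmetic difference is that the paper applies Plancherel in the $\Xi$ variable first (obtaining $\mathcal{F}_\Xi\mathbf{h}(X,Y)=\widehat{\Gamma}(Y)\,\Psi(X-Y)$ and then Fubini), whereas you apply Plancherel in the $X$ variable first; these are dual versions of the same argument.
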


\begin{proof}
We notice that
\begin{align*}
\Vert \mathbf{h} \Vert_{L^2(M\times \R^3 ; \operatorname{d})}^2 & = \int_{\R^6} \vert \mathbf{h}(X,\Xi) \vert^2 \text{d}(X,\Xi) \\[0.2cm]
 & =   \int_{\R^6} \vert \mathcal{F}_\Xi \mathbf{h}(X,Y) \vert^2 \text{d}(X,Y),
\end{align*}
where $\mathcal{F}_\Xi : L^2(M \times \R^3, \text{d}) \mapsto L^2(M \times M)$ denotes the (inverse) Fourier transform in the variable $\Xi$.
Next, observe that
$$
\mathcal{F}_\Xi \mathbf{h}(X,Y) = \widehat{\Gamma}(Y) \Psi(X - Y).
$$
Thus, by using Fubini's theorem, we get the claim.
\end{proof}

\begin{lemma}
\label{l:Calderon_Vaillancourt}
Let $\mathbf{a} \in \mathcal{C}^\infty( R^3_X \times \R^3_\Xi \times \R^3_{X'})$ be such that $\mathbf{a}$ is $2\pi$-periodic in the variables $(y,z)$ and $(y',z')$, and bounded together with all its derivatives. Then there exists a universal constant $C > 0$ such that
$$
\Vert \Op^{M}_h(\mathbf{a}) \Vert_{\mathcal{L}(L^2(M))} \leq C \sum_{\alpha \in \mathcal{N}} h^{\gamma(\alpha)} \Vert \partial^\alpha \mathbf{a} \Vert_{L^\infty( \R^3_X \times \R^3_\Xi \times \R^3_{X'})},
$$
where the sum runs on 
$$
\alpha = ( \alpha_1,\alpha_2, \beta_1, \beta_2, \alpha'_1, \alpha'_2) \in \mathbb{N}_x \times \mathbb{N}^2_{y,z} \times \mathbb{N}_\xi \times \mathbb{N}^2_{\eta,\zeta} \times \mathbb{N}_{x'} \times \mathbb{N}^2_{y',z'},
$$ 
belonging to the set of indeces
$$
\mathcal{N} := \Big \{ \alpha \in \mathbb{N}^9 \, : \, \vert \alpha_1 \vert, \vert \alpha'_1 \vert, \vert \alpha_2 \vert, \vert \alpha'_2 \vert \leq 1, \quad \vert \beta_1 \vert, \vert \beta_2 \vert \leq 2 \Big \},
$$
and
$$
\gamma(\alpha) = \vert \alpha_1 \vert + \vert \alpha'_1 \vert + \frac{ \vert \alpha_2 \vert + \vert \alpha'_2 \vert + \vert \beta_2 \vert}{2}.
$$
\end{lemma}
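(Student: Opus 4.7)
The plan is to adapt Hwang's proof \cite{Hwang87} of the Calderón-Vaillancourt theorem to the mixed continuous/discrete phase space of $T^*M$. Starting from
\begin{align*}
\langle \Op^{M}_h(\mathbf{a})\psi, \varphi\rangle_{L^2(M)} = \int_{\R^9} \mathbf{a}(X, h\Xi, X')\psi(X')\overline{\varphi(X)}e^{i\Xi\cdot(X-X')}\,\text{d}(X',\Xi,X),
\end{align*}
the goal is to insert $L^2$-cut-offs via integration by parts and then invoke Lemma \ref{l:Hwang} to bound the result by $\|\psi\|_{L^2}\|\varphi\|_{L^2}$ times finitely many $L^\infty$-norms of derivatives of $\mathbf{a}$.

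Concretely, I would introduce $\Gamma(\Xi) = (1+i\xi)^{-1}(1+i\eta)^{-1}(1+i\zeta)^{-1}$ and a matching $\Gamma_h(X-X')$ whose $(y,z)$-components use coordinates adapted to the $h^{1/2}$-scaled discrete derivatives of Definition \ref{d:discret_derivatives}. Setting $\mathfrak{D}_\Xi = (1+\partial_\xi)(1+\partial_\eta^h)(1+\partial_\zeta^h)$ and $\mathfrak{D}_{X-X'} = (1+\partial_{x-x'})(1+\partial_{y-y'})(1+\partial_{z-z'})$, one has $\mathfrak{D}_\Xi e^{i\Xi\cdot(X-X')} = \Gamma_h^{-1}e^{i\Xi\cdot(X-X')}$ and analogously for $\mathfrak{D}_{X-X'}$. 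Then continuous integration by parts in $(x,x',\xi)$ together with summation by parts (Lemma \ref{l:ipp}) in $(y,z,y',z',\eta,\zeta)$ transfers $\mathfrak{D}^*_\Xi$ and $\mathfrak{D}^*_{X-X'}$ onto $\mathbf{a}$, yielding a finite sum over $\alpha \in \mathcal{N}$ of integrals with $(\partial^\alpha \mathbf{a})(X,h\Xi,X')\Gamma(\Xi)\Gamma_h(X-X')$ in place of $\mathbf{a}$, multiplied by prefactors $h^{\gamma(\alpha)}$ issued from the chain rule $\partial_\xi[\mathbf{a}(\cdot,h\Xi,\cdot)]=h(\partial_\xi \mathbf{a})(\cdot,h\Xi,\cdot)$ and the scaling of the discrete differences at step $h^{1/2}$.

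For each such term, Lemma \ref{l:Hwang} applied to $\mathbf{h}_\varphi(X,\Xi) := \int_{\R^3} e^{iX\cdot\Lambda}\Gamma(\Lambda-\Xi)\widehat{\overline{\varphi}}(\Lambda)\,\text{d}(\Lambda)$ gives $\|\mathbf{h}_\varphi\|_{L^2} = \|\Gamma\|_{L^2}\|\varphi\|_{L^2}$ and similarly for $\psi$ (with $\Gamma_h$); Cauchy-Schwarz then concludes, the $L^2$-norms of $\Gamma$ and $\Gamma_h$ being uniformly bounded in $h$ under the convention of Definition \ref{d:discret_derivatives}. The main technical obstacle will be tracking the precise form of $\gamma(\alpha)$ through the combined IBP, explaining in particular the asymmetry that $|\beta_1|$ does not contribute while $|\alpha_1|,|\alpha'_1|$ each cost $h^1$ and the $(y,z,y',z',\eta,\zeta)$-indices cost $h^{1/2}$. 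A clean way to verify this accounting is to first apply the symplectic rescaling \eqref{e:symplectic_change_of_coordinates}--\eqref{e:unitary_transformation}, reducing $\Op^{M}_h(\mathbf{a})$ to a semiclassical-free pseudodifferential operator with symbol $\mathbf{a}\circ\bm{\kappa}_h$ on a dilated space, apply Hwang's theorem with absolute constants, and deduce the $h$-weights from the chain rule on $\mathbf{a}\circ\bm{\kappa}_h$.
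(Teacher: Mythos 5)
Your proposal is essentially the paper's proof: the paper likewise adapts Hwang's argument, first rescaling by $\bm{\kappa}_h$ (passing to $\Psi_h,\Phi_h$ and the rescaled symbol $\mathbf{a}_h$), introducing the weight $\Gamma_h$ built from the discrete coordinates $y_h,z_h$ of \eqref{e:h_points} together with $\Gamma(\Xi)$, integrating by parts via $\mathfrak{D}_X\mathfrak{D}_\Xi^2\mathfrak{D}_{X'}$ (continuous in $x,x',\xi$, discrete via Lemma \ref{l:ipp} in the remaining variables), and closing with Lemma \ref{l:Hwang}. One caveat: the $h^{1/2}$-step discrete derivatives of Definition \ref{d:discret_derivatives} live on the momentum lattice $h^{1/2}\mathbb{Z}^2$, which is only produced after the $\bm{\kappa}_h$ rescaling, so the rescaling you offer as an optional ``clean way'' to deduce $\gamma(\alpha)$ should in fact be the first step rather than an alternative bookkeeping device.
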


\begin{proof}
The proof is an adapatation of the proof of Calderón-Vaillancourt Theorem given in \cite{Hwang87}. For any $\psi, \varphi \in L^2(M)$, let us define
$$
\Psi_h(X) := h \, \psi(hx,h^{1/2}y,h^{1/2}z), \quad \Phi_h(X) := h \, \varphi(hx,h^{1/2}y,h^{1/2}z).
$$
Then:
\begin{align*}
\big \langle \Op^{M}_h(\mathbf{a}) \psi, \varphi \big \rangle_{L^2(M)} & \\[0.2cm]
 & \hspace*{-3cm} = \int_{\R_X \times \R_\Xi \times \R_{X'}} \mathbf{a}_h( X, \Xi, X') \Psi_h(X') \overline{\Phi}_h(X) e^{i \Xi \cdot (X - X')} \text{d}_h(X',\Xi,X) \\[0.2cm]
 & \hspace*{-3cm} =  \int_{\R^3_X \times \R^3_\Xi \times \R^3_{X'} \times \R^6_\Lambda} \mathbf{a}_h( X, \Xi, X') \widehat{\Psi}_h(\Lambda_1) \widehat{\overline{\Phi}}_h(\Lambda_2) e^{i \Xi \cdot (X - X')} e^{i (X' \cdot \Lambda_1 + X \cdot \Lambda_2)}  \text{d}_h(X',\Xi,X) \text{d}(\Lambda_1, \Lambda_2),
\end{align*}
where $\mathbf{a}_h(X,\Xi,X') = \mathbf{a}( hx, h^{1/2}y, h^{1/2}z, \xi, h^{1/2}\eta, h^{1/2} \zeta)$, $\Lambda = (\Lambda_1, \Lambda_2) \in \R^6$, and
$$
\text{d}(\Lambda_1, \Lambda_2) = \text{d}(\Lambda_1) \otimes \text{d}(\Lambda_2).
$$ 
We next use integration by parts ti gain decayment in the variables $(X,\Xi,X')$. To this aim, define the $L^2$ functions:
\begin{align*}
\Gamma_h(X) & := \frac{1}{1 + ix} \cdot \frac{1}{1 + iy_h} \cdot \frac{1}{1 + iz_h}, \\[0.2cm]
\Gamma(\Xi ) & := \frac{1}{1 + i\xi} \cdot \frac{1}{1 + i \eta} \cdot \frac{1}{1 + i\zeta},
\end{align*}
where
\begin{equation}
\label{e:h_points}
 y_h := \frac{e^{i h^{1/2} y}-1}{h^{1/2}} , \quad z_h := \frac{e^{i h^{1/2} z}-1}{h^{1/2}},
\end{equation}
and introduce also the functions
\begin{align*}
\mathbf{g}_h(X,\Xi,X') & :=  \mathbf{a}_h(X,\Xi,X') \Gamma_h^2(X-X') \\[0.2cm]
 \mathbf{h}_1(X',\Xi) & := e^{-i X' \cdot \Xi} \int_{\R^3} e^{i X' \cdot \Lambda} \Gamma(\Lambda - \Xi) \widehat{\Psi}_h(\Lambda) \text{d}(\Lambda), \\[0.2cm]
\mathbf{h}_2(X,\Xi) & := e^{i X \cdot \Xi} \int_{\R^3} e^{i X \cdot \Lambda} \Gamma(\Xi + \Lambda) \widehat{\overline{\Phi}}_h(\Lambda) \text{d}(\Lambda).
\end{align*}
Hence, defining the partial differential operators
\begin{align*}
\mathfrak{D}_\Xi := (1 - \partial_\xi) (1 - \overline{\partial}^h_\eta)(1 - \overline{\partial}^h_\zeta), \\[0.2cm]
\mathfrak{D}_X := (1 - \partial_x) (1 - \partial_y)(1 - \partial_z),
\end{align*}
we obtain, by Lemma \ref{l:ipp} and Lemma \ref{l:Hwang}:
\begin{align*}
\left \vert \big \langle \Op^M_h(\mathbf{a}) \psi, \varphi \big \rangle_{L^2(M)} \right \vert & \\[0.2cm]
 & \hspace*{-2.5cm} = \left \vert \int_{\R^3_X \times \R^3_\Xi \times \R_{X'}^3} \mathfrak{D}_X \mathfrak{D}^2_\Xi \mathfrak{D}_{X'} \mathbf{g}_h(X,\Xi,X')  \, \mathbf{h}_1(X',\Xi) \mathbf{h}_2(X,\Xi)\text{d}_h(X',\Xi,X) \right \vert \\[0.2cm]
& \hspace*{-2.5cm} \leq C \sum_{\alpha \in \mathcal{N}} h^{\gamma(\alpha)} \big \Vert \partial^\alpha \mathbf{a}_h \big \Vert_{L^\infty( \R^3_X \times \R^3_\Xi \times \R^3_{X'})} \Vert \Gamma_h^2 \Vert_{L^1(\R_X^3)}  \left \Vert \mathbf{h}_1 \right \Vert_{L^2(M\times \R^3; \text{d})} \left \Vert \mathbf{h}_1 \right \Vert_{L^2(M \times \R^3; \text{d})} \\[0.2cm]
& \hspace*{-2.5cm} \leq C \sum_{\alpha \in \mathcal{N}} h^{\gamma(\alpha)} \big \Vert \partial^\alpha \mathbf{a}_h \big \Vert_{L^\infty( \R^3_X \times \R^3_\Xi \times \R^3_{X'})} \Vert \psi \Vert_{L^2(M)} \Vert \varphi \Vert_{L^2(M)}.
\end{align*}

\end{proof}

\bibliographystyle{plain}
\bibliography{Referencias}

\begin{thebibliography}{10}

\bibitem{An_Leau14}
N.~Anantharaman and M.~L\'eautaud.
\newblock Sharp polynomial decay rates for the damped wave equation on the
  torus.
\newblock {\em Anal. PDE}, 7(1):159--214, 2014.
\newblock With an appendix by St\'ephane Nonnenmacher.

\bibitem{An_Mac_Leau16}
N.~Anantharaman, M.~L\'eautaud, and F.~Maci\`a.
\newblock Delocalization of quasimodes on the disk.
\newblock {\em C. R. Math. Acad. Sci. Paris}, 354(3):257--263, 2016.

\bibitem{AnantharamanMacia14}
N.~Anantharaman and F.~Maci{\`a}.
\newblock Semiclassical measures for the {S}chr\"odinger equation on the torus.
\newblock {\em J. Eur. Math. Soc. (JEMS)}, 16(6):1253--1288, 2014.

\bibitem{Ar_Riv24}
V.~Arnaiz and G.~Rivi\`ere.
\newblock Quantum limits of perturbed {sub-Riemannian} contact {Laplacians} in
  dimension~3.
\newblock {\em Journal de l{\textquoteright}\'Ecole polytechnique {\textemdash}
  Math\'ematiques}, 11:909--956, 2024.

\bibitem{ArnaizSun22}
V.~Arnaiz and C.~Sun.
\newblock Sharp resolvent estimate for the {B}aouendi-{G}rushin operator and
  applications.
\newblock {\em Comm. Math. Phys.}, 400(1):541--637, 2023.

\bibitem{Bahouri23}
H.~Bahouri, D.~Barilari, I.~Gallagher, and M.~L{\'e}autaud.
\newblock Spectral summability for the quartic oscillator with applications to
  the {Engel} group.
\newblock {\em J. Spectr. Theory}, 13(2):623--706, 2023.

\bibitem{Benedetto24}
L.~Benedetto.
\newblock Quantum-classical correspondence and obstruction to dispersion on the
  {E}ngel group.
\newblock 2024.

\bibitem{BurqSun22}
N.~Burq and C.~Sun.
\newblock Time optimal observability for {G}rushin {S}chr\"odinger equation.
\newblock {\em Analysis and PDE}, 15(6):1487--1530, 2022.

\bibitem{Byrd_Friedman71}
P.~F. Byrd and M.~D. Friedman.
\newblock {\em Handbook of elliptic integrals for engineers and scientists},
  volume Band 67 of {\em Die Grundlehren der mathematischen Wissenschaften}.
\newblock Springer-Verlag, New York-Heidelberg, second edition, 1971.

\bibitem{CdV_Letrouit22}
Y.~Colin~de Verdi\`ere and C.~Letrouit.
\newblock Propagation of well-prepared states along {M}artinet singular
  geodesics.
\newblock {\em J. Spectr. Theory}, 12(3):1235--1253, 2022.

\bibitem{Colin_de_Verdiere18}
Y.~C. De~Verdi{\`e}re, L.~Hillairet, and E.~Tr{\'e}lat.
\newblock Spectral asymptotics for sub-{Riemannian} {Laplacians}. {I}:
  {Quantum} ergodicity and quantum limits in the 3-dimensional contact case.
\newblock {\em Duke Math. J.}, 167(1):109--174, 2018.

\bibitem{Colin_de_Verdiere22}
Y.~Colin de~Verd{\`{\i}}{\`e}re, L.~Hillairet, and E.~Tr{\'e}lat.
\newblock Spectral asymptotics for sub-{Riemannian} {Laplacians}.
\newblock Preprint, {arXiv}:2212.02920 [math.{DG}] (2022), 2022.

\bibitem{Fer00}
C.~Fermanian-Kammerer.
\newblock Mesures semi-classiques 2-microlocales.
\newblock {\em C. R. Acad. Sci. Paris S\'er. I Math.}, 331(7):515--518, 2000.

\bibitem{Fermanian19}
C.~Fermanian~Kammerer and V.~Fischer.
\newblock Semi-classical analysis on {H}-type groups.
\newblock {\em Sci. China Math.}, 62(6):1057--1086, 2019.

\bibitem{Fermanian20}
C.~Fermanian-Kammerer and V.~Fischer.
\newblock Defect measures on graded {L}ie groups.
\newblock {\em Ann. Sc. Norm. Super. Pisa Cl. Sci. (5)}, 21:207--291, 2020.

\bibitem{FermanianFischer21}
C.~Fermanian-Kammerer and V.~Fischer.
\newblock Quantum evolution and sub-{L}aplacian operators on groups of
  {H}eisenberg type.
\newblock {\em J. Spectr. Theory}, 11(3):1313--1367, 2021.

\bibitem{Fermanian_Letrouit21}
C.~Fermanian~Kammerer and C.~Letrouit.
\newblock Observability and controllability for the {S}chr\"odinger equation on
  quotients of groups of {H}eisenberg type.
\newblock {\em J. \'Ec. polytech. Math.}, 8:1459--1513, 2021.

\bibitem{Gerard91}
P.~G{\'e}rard.
\newblock Microlocal defect measures.
\newblock {\em Comm. Partial Differential Equations}, 16(11):1761--1794, 1991.

\bibitem{Helffer88}
B.~Helffer.
\newblock {\em Semi-classical analysis for the {Schr{\"o}dinger} operator and
  applications}, volume 1336 of {\em Lect. Notes Math.}
\newblock Berlin etc.: Springer-Verlag, 1988.

\bibitem{Helffer_Leautaud22}
B.~Helffer and M.~Leautaud.
\newblock On critical points of eigenvalues of the {M}ontgomery family of
  quartic oscillators.
\newblock {\em Indiana Univ. Math. J.}, 73:2073--2133, 2024.

\bibitem{Hor67}
L.~H{\"o}rmander.
\newblock Hypoelliptic second order differential equations.
\newblock {\em Acta Math.}, 119:147--171, 1967.

\bibitem{Hwang87}
I.~L. Hwang.
\newblock The {$L^2$}-boundedness of pseudodifferential operators.
\newblock {\em Trans. Amer. Math. Soc.}, 302(1):55--76, 1987.

\bibitem{Letrouit23}
C.~Letrouit.
\newblock Subelliptic wave equations are never observable.
\newblock {\em Anal. PDE}, 16(3):643--678, 2023.

\bibitem{Sun20}
C.~Letrouit and C.~Sun.
\newblock Observability of {B}aouendi-{G}rushin-type equations through
  resolvent estimates.
\newblock {\em Journal of the Institute of Mathematics of Jussieu},
  22(2):541--579, 2023.

\bibitem{Macia10}
F.~Maci{\`a}.
\newblock High-frequency propagation for the {S}chr\"odinger equation on the
  torus.
\newblock {\em J. Funct. Anal.}, 258(3):933--955, 2010.

\bibitem{Mac_Riv18}
F.~Maci\`a and G.~Rivi\`ere.
\newblock Two-microlocal regularity of quasimodes on the torus.
\newblock {\em Anal. PDE}, 11(8):2111--2136, 2018.

\bibitem{Montgomery95}
R.~Montgomery.
\newblock Hearing the zero locus of a magnetic field.
\newblock {\em Comm. Math. Phys.}, 168(3):651--675, 1995.

\bibitem{Riviere24}
L.~Morin and G.~Rivière.
\newblock Quantum unique ergodicity for magnetic laplacians on $\mathbb{T}^2$.
\newblock 2024.

\bibitem{Riv23}
G.~Rivière.
\newblock Asymptotic regularity of sub-{R}iemannian eigenfunctions in dimension
  3: the periodic case.
\newblock 2023.

\bibitem{RothschildStein76}
L.~P. Rothschild and E.~M. Stein.
\newblock Hypoelliptic differential operators and nilpotent groups.
\newblock {\em Acta Math.}, 137(3-4):247--320, 1976.

\bibitem{Uribe85}
A.~Uribe.
\newblock Band invariants and closed trajectories on {{\(S^ n\)}}.
\newblock {\em Adv. Math.}, 58:285--299, 1985.

\bibitem{Zworski12}
M.~Zworski.
\newblock {\em Semiclassical analysis}, volume 138 of {\em Graduate Studies in
  Mathematics}.
\newblock American Mathematical Society, Providence, RI, 2012.

\end{thebibliography}

\end{document}